\documentclass{article}

\usepackage{amsmath,amssymb}
\usepackage{amsthm}
\usepackage[pdftex,bookmarks=true]{hyperref}
\hypersetup{pdfborder={0 0 0}}
\usepackage[noadjust]{cite}
\usepackage{enumitem}
\usepackage[capitalise, noabbrev, nameinlink]{cleveref}
\usepackage{thmtools} 

\usepackage[mathlines]{lineno}

\usepackage{tikz}
\tikzset{
every node/.style={draw, circle, inner sep=2pt}
}
\usetikzlibrary{arrows}
\usetikzlibrary{decorations.pathmorphing}
\usetikzlibrary{backgrounds}

\usepackage{soul}
\usepackage{cancel}

\newtheorem{theorem}{Theorem}[section]
\newtheorem{lemma}[theorem]{Lemma}
\newtheorem{proposition}[theorem]{Proposition}
\newtheorem{corollary}[theorem]{Corollary}

\theoremstyle{definition}
\newtheorem{definition}[theorem]{Definition}
\newtheorem{observation}[theorem]{Observation}

\newtheorem{example}[theorem]{Example}

\newtheorem{question}[theorem]{Question}

\newcommand{\trans}{^\top}

\newcommand{\dunion}{\mathbin{\dot\cup}}
\newcommand{\bzero}{\mathbf{0}}

\newcommand{\bb}{\mathbf{b}}

\newcommand{\be}{\mathbf{e}}

\newcommand{\bx}{\mathbf{x}}
\newcommand{\by}{\mathbf{y}}

\newcommand{\bu}{\mathbf{u}}
\newcommand{\bv}{\mathbf{v}}
\newcommand{\bw}{\mathbf{w}}

\newcommand{\nul}{\operatorname{null}}

\newcommand{\range}{\operatorname{range}}
\newcommand{\Col}{\operatorname{Col}}

\newcommand{\vspan}{\operatorname{span}}

\newcommand{\mptn}{\mathcal{S}} 
\newcommand{\mptncl}{\mathcal{S}^{\rm cl}}

\newcommand{\GL}[2][n]{\operatorname{GL}^{(#2)}_{#1}(\mathbb{R})}

\newcommand{\excise}{\xi\xi}
\newcommand{\ttf}{\mathcal{T}_3}
\newcommand{\ttfr}{\mathcal{T}_{3,r}}

\newcommand{\mat}[1][n]{\operatorname{Mat}_{#1}(\mathbb{R})}
\newcommand{\msym}[1][n]{\operatorname{Sym}_{#1}(\mathbb{R})}

\title{A strong nullity parameter for rooted graphs}
\author{
Aida Abiad
\thanks{Department of Mathematics and Computer Science, Eindhoven University of Technology, Eindhoven, The Netherlands (a.abiad.monge@tue.nl)}
\thanks{Department of Mathematics and Data Science, Vrije Universiteit Brussel, Belgium}
\and 
Mary Flagg
\thanks{Department of Mathematics and Computer Science, University of St.~Thomas, Houston, TX, U.S.A. (flaggm@stthom.edu)}
\and
H. Tracy Hall
\thanks{Hall Labs LLC, Provo Utah, U.S.A. (h.tracy@gmail.com)}
\and
Jephian C.-H.~Lin
\thanks{Department of Applied Mathematics, National Yang Ming Chiao Tung University, Hsinchu 300093, Taiwan (jephianlin@gmail.com)}
\and 
Bryan Shader
\thanks{University of Wyoming, Colorado, U.S.A. (bshader@uwyo.edu)}
}
\date{}

\begin{document}

\maketitle

\begin{abstract}
The inverse eigenvalue problem of a graph $G$ studies the possible spectra of matrices associated with $G$, including as an important subproblem the possible nullities of such a matrix.
Much research in this area to date has focused only on the spectrum of the matrix itself, but there are applications of inverse eigenvalue problems that also involve the interaction between that spectrum and the spectrum of some maximal proper principal submatrix, or in other words the interlacing spectrum that results from crossing out any one row and the same column.
Motivated by this refined information, given a graph $G$ on $n$ vertices with a designated root vertex,
we investigate all possible nullity pairs where the first nullity is that of an $n \times n$ symmetric matrix associated to $G$ and the second nullity is that of the principal submatrix of size $(n - 1) \times (n - 1)$ that results from deleting the row and column associated to the root vertex.
We define a new parameter $\excise(G,i)$ for rooted graphs $(G,i)$ equipped with the strong nullity interlacing property that coordinates the two values of a nullity pair, and we show that this graph parameter is minor monotone.
Moreover, we prove a bifurcation lemma for the strong nullity interlacing property.
We use these new tools to characterize the rooted graphs with $\excise(G,i) \geq s$ for $s \in \{ 0,1,2,3,4, 5\}$ by finding the minimal minors for each of these families. These families turn out to have strong connections to the minimal minors for $\xi(G) \geq k$.
\end{abstract}  

\noindent{\bf Keywords:} 
Inverse eigenvalue problem, 
strong nullity interlacing property, 
strong Arnold property, 
nullity pair, 
rooted graph minor

\medskip

\noindent{\bf AMS subject classifications:}
05C50, 
05C83, 
15A03, 
15B57, 
65F18. 

\section{Introduction}
\label{sec:intro}
Let $G$ be a graph on $n$ vertices.  The set of matrices associated to $G$ is denoted by $\mptn(G)$ and consists of all $n\times n$ real symmetric matrices whose off-diagonal $(i,j)$-entry is nonzero if and only if $\{i,j\}$ is an edge of $G$.  Here the diagonal entries can be arbitrary real numbers.
The \emph{inverse eigenvalue problem of a graph $G$} (IEP-$G$) aims to characterize all possible spectra occurs among matrices in $\mptn(G)$.  The IEP-$G$ was motivated by vibration theory \cite{Hochstadt74, GW76, Hald76, Ferguson80, BG87, MS13, GladwellIPiV05} and has strong connections to graph structure through for example the Colin de Verdi\`ere type parameters \cite{CdV, CdVF, BFH05, IEPG2}.
For more details on the IEP-$G$, see the monograph \cite{IEPGZF22}.

While the spectrum of a matrix $A\in\mptn(G)$ has been the main focus of the IEP-$G$, it is natural to consider simultaneously the spectrum of a maximal proper principal submatrix.  Let $A(i)$ be the submatrix of $A$ obtained by removing the $i$-th row and column.  The $(\lambda,\mu)$-problem, which studies the eigenvalues of $A$ (denoted by $\lambda_i$) and the eigenvalues of $A(i)$ (denoted by $\mu_i$), has received a fair amount of attention in the literature, see e.g. \cite{Hochstadt74, GW76, Hald76, Ferguson80, BG87, MS13, GladwellIPiV05, lambmu2013, BNSY14}.  Recently, the \emph{$i$-nullity pair} was introduced as $(\nul(A), \nul(A(i))$ and shown to possess a strong relationship to the rooted graph structure \cite{SNIP}.  

We say that a matrix $A\in\mptn(G)$ has \emph{the Strong Arnold Property} (or has SAP) if $X = O$ is the only symmetric matrix that satisfies $A\circ X = I\circ X = O$ and $AX = O$.  Using SAP, Colin de Verdi\`ere defined a graph parameter $\mu(G)$ and showed that it is minor monotone \cite{CdV, CdVF}; that is, $\mu(G) \leq \mu(H)$ if $G$ is a minor of $H$.  Here $G$ is a \emph{minor} of $H$ means $G$ can be obtained from $H$ by a sequence of operations of removing an edge, removing an isolated vertex, and contracting an edge.  Motivated by the Colin de Verdi\`ere parameter $\mu(G)$, the parameter $\xi(G)$ has been defined as the maximum nullity among matrices $A\in\mptn(G)$ with SAP, and this has been shown to be minor monotone as well \cite{BFH05}.  In the same paper, it was shown that $\xi(G) \geq 2$ if and only if $G$ contains $K_3$ or $K_{1,3}$ as a minor, and $\xi(G) \geq 3$ if and only if $G$ contains a minor in the $T_3$-family, as shown in \cref{fig:t3}.

In general, if a graph parameter $\zeta(G)$ is minor monotone, then a graph $G$ is said to achieve $\zeta(G) \ge k$ as a \emph{minimal minor} if $\zeta(G) \geq k$ and any proper minor $F$ of $G$ has $\zeta(F) < k$.  Thus, one may characterize graphs $G$ with $\zeta(G) \geq k$ by finding all minimal minors, of which there are only finitely many for each $k$ by the Graph Minors Theorem; see, e.g., \cite{DiestelGT}. A similar idea also works for rooted graphs.  A \emph{rooted graph} $(G,i)$ is a simple graph $G$ with a vertex $i$ designated as the root.  We say $(G,i)$ is a \emph{rooted minor} of $(H,j)$ if $(G,i)$ is obtained from a rooted graph $(H,j)$ by removing an edge, removing a vertex that is not the root, or contracting an edge, where the new vertex given by the contraction becomes the root of the minor if the contracted edge is incident to the original root.  A rooted graph parameter $\zeta(G,i)$ is then said to be \emph{minor monotone} if $\zeta(G,i) \leq \zeta(H,j)$ whenever $(G,i)$ is a rooted minor of $(H,j)$.  By an analogous graph minors theorem for rooted graphs \cite{RS10}, the set of minimal minors for $\zeta(G, i) \geq s$ is also finite for any given value of $s$.

In \cite{SNIP}, analogous statements for matrices and its submatrices were introduced and investigated.   Let $(G,i)$ be a rooted graph and let $A$ be a matrix in $\mptn(G)$.  We say $A$ has \emph{the $i$-strong nullity interlacing property} (or has $i$-SNIP) if $X = O$ is the only symmetric matrix that satisfies the identities $A\circ X = I\circ X = O$ and $(AX)(i,:] = O$.  Here $(AX)(i,:]$ designates the (non-square) submatrix of $A$ obtained by removing the $i$-th row.  We say $(G,i)$ \emph{allows} the nullity pair $(k,\ell)$ (with $i$-SNIP, respectively) if there is a matrix $A\in\mptn(G)$ that achieves the $i$-nullity pair $(k,\ell)$ (with $i$-SNIP, respectively).
It is a result of \cite{SNIP} that this new strong property enforces minor monotonicity:
If $(G,i)$ allows the nullity pair $(k,\ell)$ with $i$-SNIP and is a rooted minor of $(H,i)$, then $(H,i)$ also allows $(k,\ell)$.
Using this minor monotonicity theorem, the rooted graphs that allow $(0,0)$, $(0,1)$, $(1,1)$, or $(1,2)$ with $i$-SNIP were characterized in \cite{SNIP}, in each case in terms of minimal minors.  

In this paper we introduce a new parameter $\excise(G,i)$, which is defined as the maximum $k + \ell$ for some $k\leq \ell$ such that there is a matrix $A\in\mptn(G)$ with the $i$-nullity pair $(k,\ell)$, $k\leq \ell$, and $i$-SNIP.
Minor monotonicity can be rephrased as $\excise(G,i) \leq \excise(H,j)$ whenever $(G,i)$ is a rooted minor of $(H,j)$.
Besides minor monotonicity, we also prove a bifurcation lemma for $\excise(G,i)$,
showing that every non-negative sum up to the maximum can be achieved:
If $k + \ell \leq \excise(G,i)$ with $k\leq \ell$, then $(G,i)$ also allows $i$-nullity $(k,\ell)$ with $i$-SNIP.  As a result, this new parameter coordinates both values in a nullity pair at once---for example, $(G,i)$ allows $(1,2)$ with $i$-SNIP if and only if $\excise(G,i) \geq 3$.  
In other words, the minimal minor results of \cite{SNIP} can be restated as enumerating all of the minimal minors that characterize $\excise(G, i) \geq s$ for $s \in \{ 0,1,2,3\}$.
In the paper at hand, we advance these results by extending the characterization of $\excise(G, i) \geq s$ to $s=4$ and $s=5$.
The minimal minors for $\excise(G, i) \geq s$ for $s \in \{0, \ldots, 5\}$ are shown in \cref{fig:minminor}.
A further result of the present work is that for any $k \geq 0$, the minimal minors for $\excise(G, i) \geq 2k+1$ are obtained from the minimal minors for $\excise(G, i) \geq 2k$ by appending a leaf as the new root to the original root.
This behavior was previously observed in the minimal minors for $\excise(G, i) \geq s$ in the cases $s \in \{0, 1\}$ and $s \in \{ 2, 3\} $, but here we show that in fact this holds for any pair $s \in \{2k, 2k+1\}$. 

\begin{figure}[t]
\centering
\begin{tikzpicture}[scale=0.8, transform shape]

\node[rectangle, rounded corners=3pt, align=center, inner sep=5pt] (tau0) at (0,0) {
    $\excise(G, i) \geq 0 = 0 + 0$ \\[5pt]
    \tikz{
        \node[draw=none,rectangle,left] at (-0.5,0) {$(K_1,v)$};
        \node[fill] at (0,0) {};
    }
};

\node[rectangle, rounded corners=3pt, align=center, inner sep=5pt] (tau1) at (0,2) {
    $\excise(G, i) \geq 1 = 0 + 1$ \\[5pt]
    \tikz{
        \node[draw=none,rectangle,left] at (-0.5,0) {$(K_2,v)$};
        \node[fill] (1) at (0,0) {};
        \node (2) at (1,0) {};
        \draw (1) -- (2);
    }
};

\node[rectangle, rounded corners=3pt, align=center, inner sep=5pt] (tau2) at (5,2) {
    $\excise(G, i) \geq 2 = 1 + 1$ \\[5pt]
    \tikz{
        \node[draw=none,rectangle,left] at (-0.5,0) {$(K_3,v)$};
        \node[fill] (1) at (0,0) {};
        \node (2) at (1,0.578) {};
        \node (3) at (1,-0.578) {};
        \draw (1) -- (2) -- (3) -- (1);
    } \\[5pt]
    \tikz{
        \node[draw=none,rectangle,left] at (-0.5,0) {$(K_{1,3},\text{leaf})$};
        \node[fill] (1) at (0,0) {};
        \node (2) at (1,0.578) {};
        \node (3) at (1,-0.578) {};
        \node (4) at (0.667,0) {};
        \draw (1) -- (4) -- (2);
        \draw (4) -- (3);
    }
};

\node[rectangle, rounded corners=3pt, align=center, inner sep=5pt] (tau3) at (5,7) {
    $\excise(G, i) \geq 3 = 1 + 2$ \\[5pt]
    \tikz{
        \node[draw=none,rectangle,left] at (-0.5,0) {$(\mathsf{Paw},v)$};
        \node[fill] (1) at (0,0) {};
        \node (2) at (1,0) {};
        \node (3) at (2,0.578) {};
        \node (4) at (2,-0.578) {};
        \draw (1) -- (2) -- (3) -- (4) -- (2);
    } \\[5pt]
    \tikz{
        \node[draw=none,rectangle,left] at (-0.5,0) {$(S_{2,1,1},\text{leaf}_2)$};
        \node[fill] (1) at (0,0) {};
        \node (2) at (0.667,0) {};
        \node (3) at (1.667,0.578) {};
        \node (4) at (1.667,-0.578) {};
        \node (5) at (1.334,0) {};
        \draw (1) -- (2) -- (5) -- (3);
        \draw (5) -- (4);
    }
};

\node[rectangle, rounded corners=3pt, align=center, inner sep=5pt] (tau4) at (10,7) {
    $\excise(G, i) \geq 4 = 2 + 2$ \\[5pt]
    $T_3$-family with\\
    root at any\\
    non-cut-vertex
};

\node[rectangle, rounded corners=3pt, align=center, inner sep=5pt] (tau5) at (10,10) {
    $\excise(G, i) \geq 5 = 2 + 3$ \\[5pt]
    extending the root\\
    from $\excise(G, i) \geq 4$
};

\draw (tau0) -- (tau1) -- (tau2) -- (tau3) -- (tau4) -- (tau5);

\end{tikzpicture}
\caption{Minimal minors for $\excise$.}
\label{fig:minminor}
\end{figure}

This paper is organized as follows. \cref{ssec:prelim} introduces the needed background and notation. \cref{sec:bif} presents the Bifurcation Lemma and related results, which show that the allowed nullity pair sums are consecutive. In \cref{sec:append} we show that the minimal minors of $\excise(G, i) \geq 2k + 1$ are obtained from the minimal minors of $\excise(G, i) \geq 2k$ by appending a leaf. In \cref{sec:schur}, we study how nullity pairs interact with a Schur complement construction, which is essential for the section that follows.  Building upon these results, we characterize the minimal minors of $\excise(G, i) \geq 4,5$ in \cref{sec:excise45}. Finally, we provide a comprehensive study on $i$-SNIP in \cref{sec:snipthy}, including the proof of the Bifurcation Lemma, the edge bound, and an equivalent null space definition for $i$-SNIP.
The final \cref{sec:conc} contains some concluding remarks and an open question based on observations arising in this work.

\section{Preliminaries}
\label{ssec:prelim}
Let $A$ be a real symmetric matrix and $i$ an index.  It is known that in every instance, exactly one of the following cases holds (see, e.g., \cite[Remark~2.1]{SNIP}):
\begin{itemize}
\item $\nul(A) + 1 = \nul(A(i))$, 
\item $\nul(A) = \nul(A(i))$,
\item $\nul(A) - 1 = \nul(A(i))$.
\end{itemize}
Depending on which of these three cases holds, we say the index $i$ is \emph{upper}, \emph{neutral}, or \emph{downer}, respectively.
A consequence of this characterization is that any nullity pair $(\nul(A),\nul(A(i)) = (k,\ell)$ necessarily satisfies $|k - \ell| \leq 1$.  

On the other hand, we know that the cases of $(k,k)$ and $(k+1,k)$ are equivalent in the following sense.  

\begin{proposition}[Corollaries~2.3 and 3.10 of \cite{SNIP}]
\label{prop:kk1}
Let $(G,i)$ be a rooted graph.  Then $(G,i)$ allows the nullity pair  $(k,k)$ (respectively, with $i$-SNIP) if and only if $(G,i)$ allows the nullity pair $(k+1, k)$ (respectively, with $i$-SNIP).
\end{proposition}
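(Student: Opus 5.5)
The plan is to pass between the two nullity pairs by changing only the diagonal entry $a_{ii}$ of $A$. This keeps $A$ in $\mptn(G)$ and leaves $A(i)$ unchanged. The key steps are a congruence computation that shows which value of $a_{ii}$ makes $i$ neutral or downer, followed by a check that $i$-SNIP does not depend on $a_{ii}$.

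Write $A\in\mptn(G)$ in block form with respect to the root $i$: let $B=A(i)$, let $\bb$ be the $i$-th column of $A$ with the $i$-th entry deleted, and let $a=a_{ii}$. We distinguish two cases.
\begin{itemize}
\item If $\bb\notin\Col(B)$, then $i$ is upper, i.e.\ $\nul(A)=\nul(B)-1$, independently of $a$. To see this, pick $\bv\in\nul(B)$ with $\bv\trans\bb\neq 0$. Every null vector of $A$ then has zero $i$-th entry, and the null space of $A$ is exactly $\{\bx\in\nul(B):\bx\trans\bb=0\}$ padded with a zero.
\item If $\bb\in\Col(B)$, say $\bb=B\by$, then eliminating with the matrix $\begin{pmatrix}1&-\by\trans\\ \bzero& I\end{pmatrix}$ shows that $A$ is congruent to $(a-\by\trans B\by)\oplus B$. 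Here $\by\trans B\by$ does not depend on the choice of $\by$. Hence $\nul(A)=\nul(B)+1$ if $a=\by\trans B\by$ (so $i$ is downer), and $\nul(A)=\nul(B)$ otherwise (so $i$ is neutral).
\end{itemize}

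Now suppose $A$ realizes the pair $(k,k)$. Then $i$ is neutral, so we are in the second case. Replacing $a_{ii}$ by $\by\trans B\by$ gives a matrix $A'\in\mptn(G)$, since diagonal entries are unrestricted. It has $A'(i)=B$ and $\nul(A')=k+1$, so $A'$ realizes $(k+1,k)$. Conversely, if $A$ realizes $(k+1,k)$, then $i$ is downer, so again $\bb\in\Col(B)$. Changing $a_{ii}$ to any other value yields the pair $(k,k)$.

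For the $i$-SNIP version, observe that $A'=A+tE_{ii}$ for some scalar $t$, where $E_{ii}$ is the matrix unit. For any symmetric $X$ we have $A'X=AX+tE_{ii}X$, and $E_{ii}X$ is supported only in row $i$. Therefore $(A'X)(i,:]=(AX)(i,:]$. The other two conditions, $A'\circ X=O$ and $I\circ X=O$, involve only off-diagonal entries of $A'$ through the pattern, which is the same as that of $A$, together with the requirement that $X$ has zero diagonal. So $A$ has $i$-SNIP if and only if $A'$ does, and the equivalence transfers verbatim.

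The only step needing care is the dichotomy in the first bullet, namely that $\bb\notin\Col(B)$ forces $i$ to be upper. This is exactly what guarantees that a neutral or downer root always lies in the second case, where the congruence is available. I expect this to be routine given the three-case classification (upper, neutral, downer) recalled above.
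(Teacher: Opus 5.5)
Your proof is correct and follows essentially the route behind the cited corollaries. You move between the two pairs by toggling $a_{ii}$ between $\by\trans B\by$ and any other value, using the characterization of neutral/downer indices via $\bb\in\Col(B)$ (the same one used in the proof of \cref{lem:updown}). You then check directly that the defining system $A\circ X=I\circ X=O$, $(AX)(i,:]=O$ is unchanged under $A\mapsto A+tE_{i,i}$, which is precisely the invariance underlying \cref{thm:snipchar}(b), rather than passing through SAP. The only blemish is notational: the paper uses $\nul(\cdot)$ for the nullity, so the null space in your first bullet should be written $\ker(B)$.
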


Therefore, we focus on the nullity pairs $(k,\ell)$ when $k \leq \ell$ and define the parameter $\excise(G,i)$ as the largest $k + \ell$ such that $(G,i)$ allows the nullity pair $(k,\ell)$ with $k \leq \ell$.  

Also, \cite[Lemma~3.6]{SNIP} states that if $A$ has $i$-SNIP, then both $A$ and $A(i)$ have SAP.  Therefore, if $(G,i)$ allows the nullity pair $(k,\ell)$ with $i$-SNIP and with $k\leq \ell$, then $k \leq \xi(G)$ and $\ell \leq \xi(G - i) \leq \xi(G)$.
This gives $\excise(G,i) \leq 2\xi(G)$, justifying the choice of notation.

In fact, $i$-SNIP can be fully characterized by checking SAP for appropriate matrices.
Denote by $E_{i,i}$ the $n \times n$ matrix whose $(i,i)$-entry is $1$ while all other entries are $0$.

\begin{theorem}[Theorem~3.9 of \cite{SNIP}]
\label{thm:snipchar}
Let $A\in\msym$ and let $i$ be an index in $\{1,\dots,n\}$.  Then the following characterization holds.
\begin{enumerate}[label={{\rm(\alph*)}}]
\item If  $i$ is a downer index of $A$, then $A$ has $i$-SNIP if and only if $A$ has SAP.
\item If $i$ is a neutral index of $A$ and $t$ is the unique value such that $i$ is a downer index of $A + tE_{i,i}$, then  $A$ has $i$-SNIP if and only if $A + tE_{i,i}$ has SAP.
 \item If $i$ is an upper index of $A$, then $A$ has $i$-SNIP if and only if $A(i)$ has SAP.
\end{enumerate}
\end{theorem}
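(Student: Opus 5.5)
The plan is to translate the $i$-SNIP condition into a statement about a single symmetric matrix $X$, and then handle the three index types separately. The key reformulation: $X$ is symmetric with $A\circ X=I\circ X=O$ and $(AX)(i,:]=O$ exactly when $X$ is a SAP-witness candidate and $AX=\be_i\bw\trans$ for some row vector $\bw$. Since $X_{ii}=0$ and $X$ vanishes on edges and the diagonal, modifying $A$ only in its $(i,i)$-entry does not change the conditions $A\circ X=O$ or $I\circ X=O$. It changes $AX$ only in row $i$, which is unconstrained anyway. Hence $i$-SNIP is invariant under $A\mapsto A+tE_{i,i}$. This reduces case (b) to case (a), provided $A+tE_{i,i}$ has $i$ as a downer index. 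I would justify the existence and uniqueness of $t$ as in \cite{SNIP}: the function $t\mapsto$ nullity pair changes only through the Schur complement $a_{ii}+t-\bb\trans A(i)^{\dagger}\bb$ when $\bb\in\Col(A(i))$.

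\textbf{Case (a), $i$ a downer index.} Here $\nul(A)=\nul(A(i))+1$, and there is a null vector $\bx$ of $A$ with $x_i\neq 0$; equivalently, $\be_i\notin\Col(A)$. SAP implies $i$-SNIP trivially, since $AX=O$ is a special case of $(AX)(i,:]=O$. For the converse, suppose $AX=\be_i\bw\trans$. Multiplying on the left by $\bx\trans$ gives $0=\bx\trans AX=x_i\bw\trans$, so $\bw=\bzero$ and $AX=O$. SAP then forces $X=O$.

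\textbf{Case (c), $i$ an upper index.} Here $\nul(A(i))=\nul(A)+1$, which means $\be_i\in\Col(A)$ and every null vector of $A$ has $x_i=0$. Write $X=\begin{pmatrix}0&\by\trans\\ \by&Y\end{pmatrix}$ with $i$ placed first, $A=\begin{pmatrix}a&\bb\trans\\ \bb&B\end{pmatrix}$, and $B=A(i)$.
\begin{itemize}
\item For the direction $i$-SNIP $\Rightarrow$ SAP of $B$: given a SAP-violating $Y$ for $B$ (so $BY=O$), set $\by=\bzero$. The condition $(AX)(i,:]=O$ reduces to $BY=O$ and $\bb\cdot 0$ terms, so $X\neq O$ violates $i$-SNIP. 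Hence $i$-SNIP implies SAP for $A(i)$.
\item For the converse: the conditions read $\bb\,0+B\by=\bzero$, i.e.\ $B\by=\bzero$, together with $\bb\by\trans+BY=O$. The upper property says $\bb\notin\Col(B)$; otherwise the Schur complement analysis would give neutral or downer. Therefore $\bb\by\trans=-BY\in\Col(B)$ column-wise forces $\by=\bzero$. Then $BY=O$, and SAP of $B$ gives $Y=O$.
\end{itemize}

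The main obstacle is carefully establishing the linear-algebra facts about index types: downer $\iff$ a kernel vector with $x_i\ne0$; upper $\iff \bb\notin\Col(B)$; and the existence and uniqueness of $t$ in (b). I would prove these first via the bordered-matrix rank formula. If $\bb\notin\Col(B)$, then $\operatorname{rank}A=\operatorname{rank}B+2$, which is the upper case. Otherwise $\operatorname{rank}A=\operatorname{rank}B+\operatorname{rank}(a-\bb\trans B^{\dagger}\bb)$, giving neutral when the Schur scalar is nonzero and downer when it is zero. The unique $t$ is then $t=\bb\trans B^{\dagger}\bb-a$.
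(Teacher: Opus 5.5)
The paper does not prove this statement; it quotes it as Theorem~3.9 of \cite{SNIP}, so there is no in-paper argument to compare against. Judged on its own, your proof is correct and self-contained. Invariance of $i$-SNIP under $A\mapsto A+tE_{i,i}$ holds because $X$ has zero diagonal and the change to $AX$ is confined to row $i$, and this reduces (b) to (a). In (a), a kernel vector $\bx$ with $x_i\neq 0$ kills the only possibly nonzero row of $AX$. In (c), the relation $\bb\by\trans=-BY$ together with $\bb\notin\Col(B)$ forces $\by=\bzero$. These are the same three facts the paper uses in its proof of \cref{thm:nullspace}:
\begin{itemize}
\item in the downer case, row $i$ of $A$ lies in the span of the other rows;
\item in the neutral case, $A_t(i,:]=A(i,:]$;
\item in the upper case, column $i$ of $A(i,:]$ (which is $\bb$) is not in the span of the other columns.
\end{itemize}
Your rank-formula justification of the index types and of the unique $t=\bb\trans B^{\dagger}\bb-a$ is also correct.

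One slip needs fixing: in case (a) you have the two directions swapped. The remark that $AX=O$ is a special case of $(AX)(i,:]=O$ shows that $i$-SNIP implies SAP, since the $i$-SNIP condition ranges over a larger set of matrices $X$. Your computation $\bx\trans AX=x_i\bw\trans$ proves the nontrivial direction, that SAP implies $i$-SNIP. As written, both sentences claim the same implication, even though the content of both directions is present.

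A minor remark: the first bullet of case (c) never uses the upper hypothesis. This matches \cite[Lemma~3.6]{SNIP}, which says $A(i)$ has SAP whenever $A$ has $i$-SNIP.
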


When $A$ and $B$ are real symmetric matrices and $i$ is an index of $A$, \cite[Proposition~3.5]{SNIP} stated that the direct sum $A\oplus B$ has $i$-SNIP if and only if $A$ has $i$-SNIP and $B$ is invertible, which leads to \cref{prop:disjoint}.  This allows us to reserve attention to connected graphs.

\begin{proposition}
\label{prop:disjoint}
Let $G\dunion H$ be the disjoint union of graphs $G$ and $H$ with $i\in V(G)$.  Then 
\[
    \excise(G\dunion H, i) = \excise(G, i).
\]
\end{proposition}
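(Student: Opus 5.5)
The plan is to prove the two inequalities separately, using \cite[Proposition~3.5]{SNIP}: for a real symmetric $A$ with index $i$ and a real symmetric $B$, the direct sum $A\oplus B$ has $i$-SNIP if and only if $A$ has $i$-SNIP and $B$ is invertible. Order the vertices so that those of $G$ come first. Then every matrix in $\mptn(G\dunion H)$ has the form $A\oplus B$ with $A\in\mptn(G)$ and $B\in\mptn(H)$, since there are no edges between $G$ and $H$ and hence the off-diagonal block is zero.

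For $\excise(G\dunion H,i)\le\excise(G,i)$, take $M=A\oplus B\in\mptn(G\dunion H)$ with $i$-SNIP realizing the $i$-nullity pair $(k,\ell)$, where $k\le\ell$ and $k+\ell=\excise(G\dunion H,i)$. By the cited proposition, $A$ has $i$-SNIP and $B$ is invertible. Since $i\in V(G)$, we have $M(i)=A(i)\oplus B$. Hence $\nul(M)=\nul(A)+\nul(B)=\nul(A)$ and $\nul(M(i))=\nul(A(i))$. So $A\in\mptn(G)$ realizes the same pair $(k,\ell)$ with $i$-SNIP, which gives $\excise(G,i)\ge k+\ell$.

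For $\excise(G\dunion H,i)\ge\excise(G,i)$, take $A\in\mptn(G)$ with $i$-SNIP realizing an optimal pair $(k,\ell)$ with $k\le\ell$. The main step is to produce an invertible $B\in\mptn(H)$. Fix the off-diagonal pattern of $H$ (say all edge entries equal to $1$) and set the diagonal to $tI$ with $t$ larger than the spectral radius of the off-diagonal part. The resulting matrix is positive definite, hence invertible. Then $A\oplus B\in\mptn(G\dunion H)$ has $i$-SNIP by the cited proposition, and by the same nullity computation as above it realizes $(k,\ell)$.

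Neither step is a real obstacle. The only points to watch are that deleting row and column $i$ affects only the $A$ block, and that $\mptn(H)$ always contains an invertible matrix. The latter also holds for the empty graph on $V(H)$, where the identity matrix serves.
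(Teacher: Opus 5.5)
Your proof is correct and takes the same route as the paper, which obtains this proposition directly from \cite[Proposition~3.5]{SNIP}: $A\oplus B$ has $i$-SNIP if and only if $A$ has $i$-SNIP and $B$ is invertible. You have simply written out the two inequalities, together with the existence of an invertible matrix in $\mptn(H)$, which the paper leaves implicit.
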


Finally, we introduce notation for submatrices.  Let $A$ be a matrix, $\alpha$ a subset of row indices, and $\beta$ a subset of column indices.  Then $A[\alpha,\beta]$ stands for the submatrix of $A$ induced on the rows in $\alpha$ and columns in $\beta$.  In contrast, $A(\alpha,\beta)$ is the submatrix of $A$ obtained by removing the rows in $\alpha$ and the columns in $\beta$.  The meanings of $A(\alpha,\beta]$ and $A[\alpha,\beta)$ are also straightforward.  For convenience, we write $A[\alpha] = A[\alpha,\alpha]$, $A(\alpha) = A(\alpha,\alpha)$, and $A(i) = A(\{i\})$.    The notation~$:$ is used to denote the set of all indices, so $A(i,:]$ is the non-square submatrix of $A$ obtained by removing only row $i$ of $A$.

\section{Bifurcation Lemma}
\label{sec:bif}
Since $|\nul(A) - \nul(A(i))\,| \leq 1$ for any matrix $A$ and index $i$, all nullity pairs $(k,\ell)$ that occur are grid points in the first quadrant of the Cartesian plane lying on one of the lines $\ell = k + 1$, $\ell = k$, or $\ell = k - 1$.  

It is natural to ask the following question: If $(G,i)$ allows $(k,\ell)$, does that imply that $(G,i)$ allows $(k-1,\ell)$ or $(k,\ell-1)$?  We answer this question affirmatively (structuring the proof in three cases), and illustrate this in \cref{fig:stair}.
\begin{enumerate}
\item A rooted graph $(G,i)$ allows $(k+1,k)$ (with $i$-SNIP, respectively) if and only if it allows $(k,k)$ (with $i$-SNIP, respectively). (Observe the gray solid arrows in \cref{fig:stair}.)
\item If a rooted graph $(G,i)$ allows $(k+1,\ell+1)$, then it allows $(k,\ell)$. (Observe the gray dotted arrows in \cref{fig:stair}.)
\item If a rooted graph $(G,i)$ allows $(k+1,k+1)$ with $i$-SNIP, then it allows $(k,k+1)$ with $i$-SNIP.  Similarly, if $(G,i)$ allows $(k,k+1)$ with $i$-SNIP, then it allows $(k,k)$ with $i$-SNIP.   (Observe the black wavy arrows in \cref{fig:stair}.)
\end{enumerate}

\begin{figure}[t]
\begin{center}
\begin{tikzpicture}

\draw[-triangle 45] (-0.5,0) -- (4.5,0) node[draw=none,rectangle,right] {$\nul(A)$};
\draw[-triangle 45] (0,-0.5) -- (0,4.5) node[draw=none,rectangle,above] {$\nul(A(i))$};

\foreach \i/\j in {0/0, 1/0, 0/1, 1/1, 2/1, 1/2, 2/2, 3/2, 2/3, 3/3, 4/3, 3/4, 4/4} {
    \node[fill=white] (\i\j) at (\i,\j) {};
}

\foreach \j in {0,1,2,3} {
    \pgfmathsetmacro{\i}{int(\j + 1)}
    \draw[black!30,triangle 45-triangle 45] (\j\j) -- (\i\j);
}

\foreach \i in {2,3,4} {
    \pgfmathsetmacro{\j}{int(\i - 1)}
    \pgfmathsetmacro{\k}{int(\j - 1)}
    \draw[black!30,-triangle 45, densely dotted] (\i\j) -- (\j\k);
    \draw[black!30,-triangle 45, densely dotted] (\i\i) -- (\j\j);
    \draw[black!30,-triangle 45, densely dotted] (\j\i) -- (\k\j);
}
\draw[black!30,-triangle 45, densely dotted] (11) -- (00);

\foreach \i in {0,1,2,3} {
    \pgfmathsetmacro{\ip}{int(\i + 1)}
    \pgfmathsetmacro{\im}{int(\i - 1)}
    \draw[decoration={snake, segment length=3mm},-triangle 45, decorate] (\i\ip) -- (\i\i);
    \draw[decoration={snake, segment length=3mm},-triangle 45, decorate] (\ip\ip) -- (\i\ip);
}
\end{tikzpicture}
\end{center}
\caption{The nullity pair staircase.}
\label{fig:stair}
\end{figure}
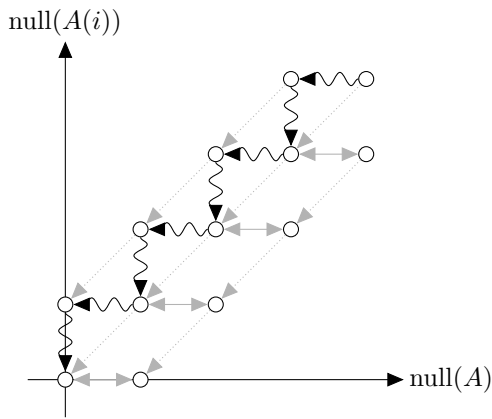

The first case follows from \cref{prop:kk1}. This is a step that does not require $i$-SNIP, but it preserves $i$-SNIP if the original matrix has $i$-SNIP.  The step in the second case does not require $i$-SNIP, and also does not necessarily preserve $i$-SNIP.
The third case both requires and preserves $i$-SNIP.
Now we prove the second case.

\begin{theorem}
\label{thm:sw}
For $k,\ell \geq 0$, if a rooted graph $(G,i)$ allows $(k+1,\ell+1)$, then it allows $(k,\ell)$.
\end{theorem}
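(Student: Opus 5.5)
Since no $i$-SNIP is required, the plan is a purely spectral perturbation: take $A \in \mptn(G)$ with $\nul(A) = k+1$ and $\nul(A(i)) = \ell+1$, and shift by a scalar, $A_t = A - tI$ with $t$ small. The shift preserves the pattern, so $A_t \in \mptn(G)$. For all small $t\neq 0$ we have $\nul(A_t)=\nul(A_t(i))=0$, which only gives $(0,0)$. So instead of a scalar shift we must remove exactly one dimension from each null space. The natural tool is a perturbation that stays in $\mptn(G)$ and lowers each nullity by exactly one, using eigenvalue interlacing.

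The concrete step is to choose a rank-one-like diagonal perturbation adapted to the eigenvalue $0$. Let $\lambda = 0$ be an eigenvalue of $A$ of multiplicity $k+1$ and of $A(i)$ of multiplicity $\ell+1$. By \cref{prop:kk1} and the trichotomy, $|k-\ell|\le 1$, so the cases are $\ell = k+1$ (upper), $\ell = k$ (neutral) and $\ell = k-1$ (downer).

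\emph{Neutral and downer cases.} Here I would use the shift $A - tI$ but compensate at the root: the eigenvalues of $A(i)$ near $0$ and of $A$ near $0$ coalesce at $0$ with the stated multiplicities. Take instead $B = A - tI + sE_{i,i}$. Since $A(i)$ does not see $s$, the eigenvalues of $B(i)$ are those of $A(i)$ shifted by $-t$, so $\nul(B(i)) = 0$ for $t\neq 0$. That is too much. Hence a scalar shift cannot lower the nullities by only one. The better route is to perturb \emph{only} along the null spaces. Choose a nonzero vector $\bx \in \nul(A)$ with $x_i = 0$, which lies in $\nul(A(i))$ as well. Such a vector exists whenever $\nul(A)$ and $\nul(A(i))$ share a common direction, and this holds since $\dim\{\bx\in\nul(A): x_i=0\} \ge k \ge 1$ when $k\ge 1$. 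Perturbing by $\varepsilon \bx\bx\trans$ would lower both nullities by one, but it destroys the pattern.

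So the key step, and the main obstacle, is realizing the desired nullity drop inside $\mptn(G)$. My plan is to use the standard eigenvalue-moving argument. The matrices in $\mptn(G)$ with fixed pattern form a manifold containing $A$. Consider the family $A + tD$ with $D$ diagonal, supported off $i$, chosen so that $\bx\trans D\bx \ne 0$ for the common null vector $\bx$. Such a $D$ exists since $\bx\ne\bzero$ with $x_i=0$. By analytic perturbation theory (Rellich), the $0$-eigenvalues of $A+tD$ split according to the eigenvalues of the compression of $D$ onto $\nul(A)$, and similarly for $A(i)+tD(i)$ on $\nul(A(i))$. I would choose $D$ so that each compression has exactly one nonzero eigenvalue. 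Take $D$ to be a diagonal matrix approximating a multiple of $\bx\bx\trans$ restricted to the relevant eigenspaces, for instance a generic diagonal $D$ whose compression onto the common subspace has rank one. Then for small $t\ne 0$ exactly one eigenvalue leaves $0$ in each, giving $(k,\ell)$.

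If a rank-one compression cannot be forced this way, the fallback is to iterate. A generic small diagonal perturbation moves eigenvalues continuously, and I would move along a one-parameter path from $A$ to $A - tI$ on which nullities drop one at a time, using interlacing to ensure that the $A$ and $A(i)$ nullities are lowered together rather than separately.
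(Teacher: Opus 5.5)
There is a genuine gap at the decisive step. You isolate the right ingredients (a nonzero $\bx\in\ker(A)$ with $x_i=0$, and a diagonal perturbation supported off the root), but you never exhibit a perturbation that provably lowers each nullity by \emph{exactly} one.

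A generic diagonal $D$ does the opposite of what you want. If $\bu_m\trans$ are the rows of a basis matrix for $\ker(A(i))$, these rows span $\mathbb{R}^{\ell+1}$. So the compression $\sum_m d_m\bu_m\bu_m\trans$ is nonsingular for generic $d_m$, and all $\ell+1$ zero eigenvalues of $A(i)$ leave $0$; typically the same happens for $A$.

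A rank-one compression is also not enough, even when you can arrange it. First-order (Rellich) theory only says that one eigenvalue leaves $0$ at order $t$; the others can still move at second order. For instance, take $A=\begin{bmatrix}1&-1&0\\-1&1&0\\0&0&0\end{bmatrix}$ and $D=\operatorname{diag}(1,-1,1)$. The compression onto $\ker(A)$ is $\operatorname{diag}(0,1)$, of rank one, yet $\det(A+tD)=-t^3\neq0$, so the nullity drops from $2$ to $0$.

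The fallback path toward $A-tI$ gives no control either, since the pair jumps to $(0,0)$ immediately. Finally, your dimension count produces $\bx$ only when $k\ge1$. For $k=0$ you need a separate argument: if the null vector of $A$ had nonzero $i$-th entry, then $i$ would be a downer index, forcing $\nul(A(i))=0\neq\ell+1$.

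The missing idea is to make $D$ itself rank one and to replace perturbation theory by exact linear algebra. Take $D=E_{j,j}$ for an index $j$ with $x_j\neq0$ (necessarily $j\neq i$). Since $A\bx=\bzero$ with $x_j\neq0$, column $j$ of $A$ is a combination of the other columns. Symmetric row and column operations therefore reduce $A+tE_{j,j}$ to $A(j)\oplus[t]$ up to reordering, so $\nul(A+tE_{j,j})=\nul(A(j))=\nul(A)-1=k$ for every $t\neq0$. Deleting entry $i$ of $\bx$ gives a null vector of $A(i)$ that is still nonzero at $j$. The same computation then gives $\nul((A+tE_{j,j})(i))=\nul(A(\{i,j\}))=\nul(A(i))-1=\ell$. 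Equivalently, the $k$-dimensional space $\{\bv\in\ker(A):v_j=0\}$ is annihilated by $E_{j,j}$ and so stays in the kernel, which is exactly what your generic $D$ fails to guarantee. This is the paper's proof, with $t=1$.
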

\begin{proof}
Let $A\in\mptn(G)$ with the $i$-nullity pair $(k+1, \ell+1)$.  We first claim that there is a nonzero vector $\bv\in\ker(A)$ such that its $i$-th entry $(\bv)_i$ is $0$.  To see this, we consider two cases.  First, we consider the case when $k+1 \geq 2$.  Since any basis of $\ker(A)$ contains at least two vectors, we may obtain a linear combination of them to create a nonzero vector $\bv\in\ker(A)$ with $(\bv)_i = 0$.  Second, we consider the case when $k+1 = 1$ and $\ker(A) = \vspan\{\bv\}$ for some vector $\bv$.  If $(\bv)_i = 0$, then we are done.
Otherwise, $(\bv)_i \neq 0$ implies that column $i$ of $A$ is a linear combination of other columns.  Since $A$ is symmetric, this means $\nul(A(i)) = \nul(A) - 1 = 0 = \ell + 1$, which contradicts the assumption $\ell \geq 0$.

Let $\bv\in\ker(A)$ be a nonzero vector with $(\bv)_i = 0$.  Let $j$ be an index such that $(\bv)_j \neq 0$.  Necessarily, $j \neq i$.  We claim that $A' = A + E_{j,j}$ has the $i$-nullity pair $(k,\ell)$.  To see this, we observe that $(\bv)_j \neq 0$ implies that column $j$ of $A$ is a linear combination of other columns, so $\nul(A(j)) = \nul(A) - 1 = k$.
On the other hand, after row and column operations on $A'$, column $j$ and row $j$ are left with a single nonzero entry $1$ occupying the $(i,i)$-position.
Therefore, $\nul(A') = \nul(A(j)) = k$.  On the other hand, let~$\bu$ be the vector obtained from~$\bv$ by removing entry $i$.
Then $\bu\in\ker(A(i))$ is a nonzero vector with $(\bu)_j \neq 0$.
For the same reason, we have
\[
\nul(A'(i)) = \nul(A(\{i,j\}) = \nul(A(i)) - 1 = \ell.
\]
\end{proof}

For the third case, we need the Bifurcation Lemma (\cref{thm:bifur}), which will be proved in \cref{sec:snipthy}.

\begin{restatable}[Bifurcation Lemma]{theorem}{bifurcation}
\label{thm:bifur}
Let $(G,i)$ be a rooted graph on $n$ vertices and $A \in \mptn(G)$ a matrix with $i$-SNIP.  Then there is an $\epsilon > 0$ such that for any $M\in\msym$ with $\|M - A\| < \epsilon$, a matrix $A'\in\mptn(G)$ exists with $i$-SNIP such that $M$ and $A'$ have the same $i$-nullity pair.
\end{restatable}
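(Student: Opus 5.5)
Following the pattern of the usual bifurcation lemma for SAP, the plan is to write the conclusion as a transversality statement and then apply the implicit function theorem. Let $m$ be the number of edges of $G$. We identify $\mptn(G)$, locally around $A$, with an open subset of the linear space $\mathcal{L}(G)$ of symmetric matrices supported on the diagonal and the edges of $G$. This space has dimension $n+m$.

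The first step is to find a manifold on which the $i$-nullity pair is constant, namely the orbit of $M$ under the congruence action restricted so that it fixes the $i$-th coordinate. Let $\mathcal{G}_i$ be the group of invertible matrices $Q$ with $Q\be_i = \be_i$, or equivalently those of block form fixing the $i$-th row and column up to scaling. One checks that if $Q$ has $i$-th row equal to $\be_i\trans$ (up to a nonzero scalar), then $Q\trans M Q(i) = Q(i)\trans M(i) Q(i)$ plus nothing extra. The right choice is the $Q$ whose $i$-th column is $\be_i$ times a scalar and whose $i$-th row is arbitrary. With this choice $(Q\trans MQ)(i) = \tilde Q\trans M(i)\tilde Q$ with $\tilde Q = Q(i)$ invertible. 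Hence $M\mapsto Q\trans MQ$ preserves both $\nul(M)$ and $\nul(M(i))$, and so the $i$-nullity pair.

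The next step is to compute the tangent space of the orbit $\{Q\trans M Q\}$ at $M$. It is $\{K\trans M + MK : K\in\mathcal{K}_i\}$, where $\mathcal{K}_i$ is the set of matrices whose $i$-th column is a multiple of $\be_i$. We then show that its orthogonal complement in $\msym$ is exactly the set of symmetric $X$ with $(MX)(i,:] = O$. Indeed, $\langle X, K\trans M+MK\rangle = 2\,\mathrm{tr}(K\trans MX)$ vanishes for all such $K$ exactly when the rows of $MX$ other than row $i$ vanish.

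The main step is the transversality argument. Since $A$ has $i$-SNIP, the only symmetric $X$ orthogonal to both $\mathcal{L}(G)$ (that is, $A\circ X = I\circ X = O$) and the tangent space $T_A$ of the orbit at $A$ is $X=O$. Hence $\mathcal{L}(G) + T_A = \msym$. Consider the smooth map
\[
\Phi(B, K, M) = (I+K)\trans M (I+K) - B, \qquad B\in\mathcal{L}(G),\ K\in\mathcal{K}_i .
\]
At $(B,K,M) = (A,O,A)$ we have $\Phi=O$, and the derivative in $(B,K)$ is $(\dot B,\dot K)\mapsto \dot K\trans A + A\dot K - \dot B$, which is surjective by the transversality just established. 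We restrict $(B,K)$ to a complement of the kernel of this derivative. The implicit function theorem then gives, for every $M$ with $\|M-A\|<\epsilon$, a pair $(B(M),K(M))$ close to $(A,O)$ with $B(M) = (I+K)\trans M(I+K)$. Because $B(M)$ is close to $A$, its edge entries stay nonzero, so $A' := B(M)\in\mptn(G)$. Since $I+K$ is invertible and lies in the group described above, $A'$ has the same $i$-nullity pair as $M$.

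The step I expect to be the main obstacle is showing that $A'$ has $i$-SNIP. My first attempt would use openness: the $i$-SNIP condition is the statement that $\mathcal{L}(G)$ and $T_{A'}$ span $\msym$, and I would argue that this persists near $A$. The difficulty is that the dimension of $T_B$ can drop or jump as $B$ varies, so openness is not automatic. I would handle it through the characterization $X\mapsto$ rank of the linear system $\{A\circ X = I\circ X = O,\ (AX)(i,:]=O\}$. The map $X\mapsto (A\circ X, I\circ X, (AX)(i,:])$ is linear in $X$, and its coefficients depend continuously on $A$. Injectivity at $A$ means it has full column rank, and full column rank is an open condition, so it holds for all $A'$ near $A$. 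Shrinking $\epsilon$ so that $B(M)$ stays in that neighborhood finishes the proof.
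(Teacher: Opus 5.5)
Your route is the paper's: combine perturbations in $\mptncl(G)$ with congruences that preserve the $i$-nullity pair, derive surjectivity of the linearization from $i$-SNIP via orthogonal complements, apply the inverse/implicit function theorem, and finish by openness. Solving $(I+K)^\top M(I+K)=B$ rather than $Q^\top(A+B)Q=M$ is only cosmetic, and your full-column-rank argument is a fine explicit version of the paper's one-line ``since the perturbation is small''. However, the group you finally commit to is the wrong one. The congruence $M\mapsto Q^\top MQ$ satisfies $(Q^\top MQ)(i)=Q(i)^\top M(i)Q(i)$ when row $i$ of $Q$ is a multiple of $\be_i^\top$, i.e.\ $Q[i,i)=\bzero^\top$ (the paper's $\GL{i}$), exactly as your first sentence says. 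It fails when instead column $i$ is a multiple of $\be_i$ and row $i$ is arbitrary. For example, take $M=\mathrm{diag}(1,0)$, $i=1$ and $Q=\begin{bmatrix}1&t\\0&1\end{bmatrix}$, so that $Q\be_1=\be_1$. Then $Q^\top MQ=\begin{bmatrix}1&t\\t&t^2\end{bmatrix}$, whose $1$-nullity pair is $(1,0)$ for every $t\neq 0$, while that of $M$ is $(1,1)$. So with your $\mathcal{G}_i$ and $\mathcal{K}_i$, the concluding claim that $A'=(I+K)^\top M(I+K)$ has the same $i$-nullity pair as $M$ is false, however small $K$ is.

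The same swap makes your orthogonal-complement step inconsistent with your definition of $\mathcal{K}_i$. Since $\langle X,K^\top A+AK\rangle=2\sum_{j,k}(AX)_{jk}K_{jk}$, letting $K$ range over matrices with $K_{ji}=0$ for $j\neq i$ forces the \emph{columns} of $AX$ other than column $i$ to vanish (that is, $(XA)(i,:]=O$), not the rows. The $i$-SNIP hypothesis concerns $(AX)(i,:]$, so it would not give $\mathcal{L}(G)+T_A=\msym$. The row condition $(AX)(i,:]=O$ that you wrote down, together with $(AX)_{ii}=0$ (which is automatic once $A\circ X=I\circ X=O$), is precisely the complement of the tangent space when $K$ ranges over $\{K: K[i,i)=\bzero^\top\}$. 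Replacing ``column'' by ``row'' in the definitions of $\mathcal{G}_i$ and $\mathcal{K}_i$ repairs both steps at once, and the argument then coincides with the paper's proof.
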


\begin{lemma}[West Lemma]
\label{lem:west}
For $k \geq 0$, if a rooted graph $(G,i)$ allows nullity pair $(k+1,k+1)$ with $i$-SNIP, then it allows nullity pair $(k,k+1)$ with $i$-SNIP. 
\end{lemma}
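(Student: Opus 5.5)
The plan is to combine the Bifurcation Lemma (\cref{thm:bifur}) with a small perturbation that lowers $\nul(A)$ by one while keeping $\nul(A(i))$ fixed.

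Let $A\in\mptn(G)$ have $i$-SNIP and $i$-nullity pair $(k+1,k+1)$, so $i$ is a neutral index of $A$. \cref{thm:bifur} gives an $\epsilon>0$ such that every symmetric $M$ with $\|M-A\|<\epsilon$ has its $i$-nullity pair realized by some $A'\in\mptn(G)$ with $i$-SNIP. It therefore suffices to exhibit a symmetric $M$, arbitrarily close to $A$, with $i$-nullity pair $(k,k+1)$. The perturbation need not respect the zero pattern of $G$, since \cref{thm:bifur} allows arbitrary $M\in\msym$.

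The natural choice is $M=A+tE_{i,i}$ with $t\neq 0$ small. This leaves $A(i)$ unchanged, so $\nul(M(i))=k+1$. By interlacing, $\nul(M)\in\{k,k+1,k+2\}$. Since $i$ is neutral for $A$, \cref{thm:snipchar}(b) guarantees a unique $t_0$ for which $i$ is a downer index of $A+t_0E_{i,i}$. For that value, $\nul(A+t_0E_{i,i})=\nul(A(i))+1=k+2$. In this paper's framework, one expects the pattern to be: all values $t\neq t_0$ keep $i$ neutral, and... but that is not right. We need $M$ to make $i$ an upper index.

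To settle this, I will compute directly. Write $A=\begin{pmatrix} a & \bb\trans\\ \bb & B\end{pmatrix}$ with $B=A(i)$. Then $\nul(A+tE_{i,i})$ depends on whether $\bb\in\Col(B)$:
\begin{itemize}
\item If $\bb\notin\Col(B)$, then $\nul(M)=\nul(B)-1$ for every $t$, so $i$ is always upper. But then $A$ itself has $i$ upper, which contradicts $i$ being neutral.
\item Hence $\bb=B\bx$ for some $\bx$, and the Schur complement gives $\nul(M)=\nul(B)+[a+t-\bx\trans B\bx=0]$.
\end{itemize}
So diagonal perturbations at $i$ only toggle between neutral and downer, and cannot produce the pair $(k,k+1)$.

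This forces a perturbation of the off-diagonal entries of row $i$. I will take $M=\begin{pmatrix} a & (\bb+\delta\bw)\trans\\ \bb+\delta\bw & B\end{pmatrix}$ with $\bw\notin\Col(B)$. Such a $\bw$ exists because $B$ is singular, as $\nul(B)=k+1\geq 1$. Then $\bb+\delta\bw\notin\Col(B)$ for every $\delta\neq 0$, because $\bb\in\Col(B)$. By the bordered-matrix rule, $\nul(M)=\nul(B)-1=k$ and $\nul(M(i))=k+1$. Taking $\delta$ small gives $\|M-A\|<\epsilon$, and \cref{thm:bifur} then yields $A'\in\mptn(G)$ with $i$-SNIP and $i$-nullity pair $(k,k+1)$.

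The main step to verify carefully is the bordered-matrix fact: if $\bb\notin\Col(B)$, then $\nul\begin{pmatrix} a&\bb\trans\\ \bb&B\end{pmatrix}=\nul(B)-1$. To check it, take $\bz\in\ker B$ with $\bz\trans\bb\neq0$. Such a $\bz$ exists because $\ker B=\Col(B)^\perp$ and $\bb\notin\Col(B)$. Then a kernel vector $(\alpha,\by)$ of $M$ satisfies $\alpha\bb+B\by=0$, which forces $\alpha=0$ and $\by\in\ker B$, and then $\bb\trans\by=0$ as well. This gives $\ker M\cong\ker B\cap\bb^\perp$, which has dimension $\nul(B)-1$.
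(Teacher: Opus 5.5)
Your proof is correct and follows the paper's argument. The paper also perturbs only the off-diagonal part of row and column $i$, replacing $\bb$ by $\bb+\epsilon\bv$ with $\bv$ a nonzero vector in $\ker(A(i))$, which is one particular choice of your $\bw\notin\Col(A(i))$; this makes $i$ an upper index with pair $(k,k+1)$, and the Bifurcation Lemma finishes. Your analysis of the diagonal perturbations $A+tE_{i,i}$ is correct but unnecessary, and the abandoned sentence about $t\neq t_0$ should be removed from a final write-up.
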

\begin{proof}
Let $A$ be a matrix in $\mptn(G)$ with $i$-SNIP that achieves the $i$-nullity pair $(k+1,k+1)$ for some $k \geq 0$.
For convenience, we may without loss of generality assume $i = 1$ and write  
\[
    A = \begin{bmatrix} a & \bb\trans \\
    \bb & C
    \end{bmatrix}.   
\]
Since $i$ is a neutral index of $A$, we have $\bb\in\Col(C)$.  Since $k\geq 1$, there is a nonzero vector $\bv\in\ker(C)$.  Consider the matrix 
\[
    M = \begin{bmatrix} a & \bb\trans + \epsilon\bv\trans \\
    \bb + \epsilon\bv & C 
    \end{bmatrix}.   
\]
For $\epsilon\neq 0$, we now have $\bb + \epsilon\bv\notin\Col(C)$, so $i$ is an upper index of $M$ and the $i$-nullity pair of $M$ is $(k,k+1)$. 
 By the Bifurcation Lemma (\cref{thm:bifur}), there is a matrix $A'\in\mptn(G)$ that has $i$-SNIP and that achieves the $i$-nullity pair $(k,k+1)$.  
\end{proof}

\begin{lemma}[South Lemma]
\label{lem:south}
For $k \geq 0$, if a rooted graph $(G,i)$ allows $(k,k+1)$ with $i$-SNIP, then it allows $(k,k)$ with $i$-SNIP.
\end{lemma}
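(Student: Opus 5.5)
Following the proof of the West Lemma, we perturb a matrix with $i$-SNIP to a nearby symmetric matrix $M$ whose $i$-nullity pair is $(k,k)$, and then apply the Bifurcation Lemma (\cref{thm:bifur}).

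Let $A\in\mptn(G)$ have $i$-SNIP and $i$-nullity pair $(k,k+1)$. Without loss of generality take $i=1$ and write
\[
    A = \begin{bmatrix} a & \bb\trans \\
    \bb & C
    \end{bmatrix}.
\]
Since $i$ is an upper index of $A$, we have $\nul(C)=k+1$ and $\bb\notin\Col(C)$. Because $C$ is symmetric, this means there is $\bv\in\ker(C)$ with $\bb\trans\bv\neq 0$.

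The perturbation I would use changes only the diagonal entries of $C$, so that $\nul(C)$ drops by exactly one while $\bb$ comes to lie in the new column space. Choose an orthonormal basis $\bv_1,\dots,\bv_{k+1}$ of $\ker(C)$ with $\bv_1$ parallel to the projection of $\bb$ onto $\ker(C)$. Then $\bb\trans\bv_1\neq0$ and $\bb\trans\bv_j=0$ for $j\geq 2$. Set
\[
    M = A + \begin{bmatrix} 0 & \bzero\trans \\ \bzero & \epsilon\,\bv_1\bv_1\trans\end{bmatrix}, \qquad C' = C+\epsilon\bv_1\bv_1\trans .
\]
We verify the nullity pair of $M$ as follows.
\begin{itemize}
\item $C'$ acts as $C$ on $\ker(C)^\perp$, as $\epsilon$ on $\bv_1$, and as $0$ on $\bv_2,\dots,\bv_{k+1}$. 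Hence $\ker(C')=\vspan\{\bv_2,\dots,\bv_{k+1}\}$ and $\nul(M(i))=\nul(C')=k$.
\item $\bb$ is orthogonal to $\ker(C')$, so $\bb\in\Col(C')$ and $i$ is not an upper index of $M$.
\item The Schur complement $a-\bb\trans C'^{+}\bb$ varies continuously with $\epsilon$. For generic small $\epsilon\neq0$ it is nonzero, and then $i$ is a downer index of $M$ with pair $(k+1,k)$. If it happens to vanish, $i$ is neutral and the pair is $(k,k)$ directly.
\end{itemize}

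The step I expect to be the main obstacle is the generic case, where the pair is $(k+1,k)$ rather than the target $(k,k)$. I would handle it by additionally adjusting the $(i,i)$-entry of $M$ by a small $t$ chosen to make the Schur complement vanish. This is the move behind \cref{prop:kk1}, and it is available because $C'^{+}$ has a term of size $\epsilon^{-1}(\bb\trans\bv_1)^2$. More precisely, $\bb\trans C'^{+}\bb = \bb\trans C^{+}\bb + \epsilon^{-1}(\bb\trans\bv_1)^2$. So we set
\[
t = \bb\trans C^{+}\bb + \epsilon^{-1}(\bb\trans\bv_1)^2 - a .
\]
The problem is that $t$ is not small as $\epsilon\to0$. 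To fix this, take $\epsilon$ of sign opposite to that of $a-\bb\trans C^{+}\bb$, scale $\bv_1$ by a free parameter $s$ (so the diagonal perturbation is $\epsilon s^2\bv_1\bv_1\trans$), and choose $s$ small with $\epsilon$. Concretely, fix the ratio $\epsilon s^2 = \delta$ and tune $\delta\to0$ so that the required $t\to0$. I expect this balancing to need care.

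Failing that, a cleaner alternative avoids the balancing entirely. Obtain some $M$ near $A$ with pair $(k+1,k)$ as in the generic case. Apply \cref{thm:bifur} to get $A'\in\mptn(G)$ with $i$-SNIP and pair $(k+1,k)$. Then invoke \cref{prop:kk1}, which preserves $i$-SNIP, to conclude that $(G,i)$ allows $(k,k)$ with $i$-SNIP. This second route is the one I would actually commit to, since it needs only the generic case of the perturbation.
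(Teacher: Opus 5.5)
There is a genuine gap. In your third bullet the roles of ``downer'' and ``neutral'' are swapped, and the route you commit to depends on a matrix that cannot exist.

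If $\bb = C'\bx$, then $M$ is congruent to $(a-\bx\trans C'\bx)\oplus C'$. So a \emph{nonzero} generalized Schur complement $a-\bb\trans C'^{+}\bb$ makes $i$ \emph{neutral}, with pair $(\nul(C'),\nul(C'))$. Only a \emph{vanishing} one makes $i$ downer; compare the neutral and downer cases in the proof of \cref{lem:updown}.

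More fundamentally, no $M$ close to $A$ can have pair $(k+1,k)$. Rank is lower semicontinuous, so $\nul(M)\le\nul(A)=k$ for all $M$ sufficiently near $A$. Hence the step ``obtain some $M$ near $A$ with pair $(k+1,k)$'' in your fallback route cannot be carried out, and the appeal to \cref{thm:bifur} and \cref{prop:kk1} never gets started.

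The balancing scheme fails for the same reason: forcing the Schur complement to vanish would force nullity $k+1$ near $A$. Indeed, the required $t$ is of order $\delta^{-1}(\bb\trans\bv_1)^2$ however you parametrize, since setting $\epsilon s^2=\delta$ only renames the size of the same rank-one perturbation. Note also that the Schur complement does not vary continuously down to $\epsilon=0$; it blows up.

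The repair is immediate and turns your argument into essentially the paper's. Your first two bullets already give $\nul(M(i))=\nul(C')=k$ and $\bb\in\Col(C')$. So $i$ is not upper, and therefore $\nul(M)\ge k$. Combined with $\nul(M)\le k$, this forces pair $(k,k)$ for every small $\epsilon\neq 0$. Equivalently, $a-\bb\trans C^{+}\bb-\epsilon^{-1}(\bb\trans\bv_1)^2\neq 0$ for small $\epsilon$, so $i$ is neutral.

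Now apply \cref{thm:bifur} directly; no detour through $(k+1,k)$ is needed. The paper does exactly this. It uses the perturbation $C+\epsilon\bb\bb\trans$ in place of your $C+\epsilon\bv_1\bv_1\trans$, and the semicontinuity argument in place of an explicit kernel computation.
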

\begin{proof}
Let $A$ be a matrix in $\mptn(G)$ with $i$-SNIP that achieves the $i$-nullity pair $(k,k+1)$.  For convenience, we may without loss of generality assume $i = 1$ and write  
\[
    A = \begin{bmatrix} a & \bb\trans \\
    \bb & C
    \end{bmatrix}.   
\]
Since $i$ is an upper index of $A$, we have $\bb\notin\Col(C)$.  Equivalently, there is a vector $\bv\in\ker(C)$ such that $\bb\trans\bv\neq 0$.  Consider the matrix 
\[
    M = \begin{bmatrix} a & \bb\trans \\
    \bb & C + \epsilon\bb\bb\trans
    \end{bmatrix}.   
\]
For $\epsilon\neq 0$, we now have 
\[
    (C + \epsilon\bb\bb\trans)\bv = \bzero + \epsilon(\bb\trans\bv)\bb,
\]
so $\bb\in\Col(C + \epsilon\bb\bb\trans)$ and $i$ is a downer or neutral index for $A'$.  Note that adding a small rank-$1$ perturbation can only cause the nullity to change by at most one, and that when the perturbation is small, the nullity can only stay the same or be reduced.  In summary, $\nul(M) \in \{ k, k-1\}$ and $\nul(M(i)) \in \{ k+1, k\}$.  The only possibility to make $i$ downer or neutral is $\nul(M) = k = \nul(M(i))$.  By the Bifurcation Lemma (\cref{thm:bifur}), there is a matrix $A'\in\mptn(G)$ that has $i$-SNIP and that achieves the same $i$-nullity pair as $M$, namely $(k,k)$.  
\end{proof}

\begin{example}
\label{ex:bifurstar}
Consider the star graph $K_{1,5}$.  Let $(K_{1,5}, \text{center})$ and $(K_{1,5}, \text{leaf})$ be rooted graphs based on $K_{1,5}$ with the root at the center and an arbitrary leaf, respectively.  If we only focus on the nullity pairs $(k,\ell)$ with $k\leq \ell$, then by \cite[Proposition~4.6]{SNIP} the allowed nullity pairs are as follows.
\begin{itemize}
\item $(K_{1,5}, \text{center})$ allows $\underline{(0,0)}$, $\underline{(0,1)}$, $(1,2)$, $(2,3)$, $(3,4)$, and $(4,5)$.
\item $(K_{1,5}, \text{leaf})$ allows $\underline{(0,0)}$, $\underline{(0,1)}$, $\underline{(1,1)}$, $(2,2)$, and $(3,3)$.
\end{itemize}
Here the nullity pairs that are underlined are those that allow $i$-SNIP by \cite[Theorem~5.12]{SNIP}.  We note that $(K_{1,5}, \text{center})$ allows $(1,2)$ but not $(1,1)$, indicating that $i$-SNIP is necessary in \cref{lem:south}.
On the other hand, $(K_{1,5}, \text{leaf})$ allows $(2,2)$ but not $(1,2)$, indicating that $i$-SNIP is necessary in \cref{lem:west}.  Similar behaviors can also be observed on larger star graphs.
\end{example}

\section{Extending the root}
\label{sec:append}
We say that a rooted graph $(G,i)$ is obtained from a rooted graph $(H,j)$ by \emph{extending the root} if $G$ is obtained from $H$ by appending a leaf $i$ to $j$ and reassigning the root to $i$.  
Referring to \cref{fig:minminor}, we observe that the minimal minors for $\excise(G, i) \geq 2k + 1$ are exactly those rooted graphs obtained from the minimal minors of $\excise(G, i) \geq 2k$ by extending the roots.  This observation is true for at least $k \in \{ 0, 1\}$ by \cite{SNIP}.
It turns out that the observation is true in general for all $k \geq 0$.

We first show that minimal minors for $\excise(G, i) \geq 2k + 1$ always have their roots at leaves.

\begin{lemma}
\label{lem:oddleaf}
If $(G,i)$ is a minimal minor of $\excise(G, i) \geq 2k + 1$ for some $k \geq 0$, then $i$ is a leaf of $G$.
\end{lemma}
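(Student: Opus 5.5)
The plan is to show that if the root has degree at least two, we can delete edges at $i$ and keep $\excise\geq 2k+1$. This contradicts minimality.

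First I reduce to the nullity pair $(k,k+1)$. Since $\excise(G,i)\geq 2k+1$, the rooted graph $(G,i)$ allows some pair $(k',\ell')$ with $i$-SNIP, where $k'\leq\ell'$ and $k'+\ell'\geq 2k+1$. Because $|k'-\ell'|\leq 1$, this pair has the form $(m,m)$ or $(m,m+1)$. Repeated use of the West Lemma (\cref{lem:west}) and the South Lemma (\cref{lem:south}) walks down the staircase of \cref{fig:stair}, preserving $i$-SNIP. This shows that $(G,i)$ allows exactly $(k,k+1)$ with $i$-SNIP.

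Fix such an $A\in\mptn(G)$ and write it as in the proof of \cref{lem:west}, with $i=1$:
\[
A=\begin{bmatrix} a & \bb\trans\\ \bb & C\end{bmatrix}.
\]
The pair $(k,k+1)$ makes $i$ an upper index, so $\bb\notin\Col(C)$. By \cref{thm:snipchar}(c), $i$-SNIP of $A$ is equivalent to $C=A(i)$ having SAP. The support of $\bb$ is exactly the neighbourhood $N(i)$. In particular $i$ is not isolated, since otherwise $\bb=\bzero\in\Col(C)$.

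The key step is as follows. Because $\bb=\sum_{j\in N(i)} b_j\be_j$ lies outside the subspace $\Col(C)$, some neighbour $j\in N(i)$ must satisfy $\be_j\notin\Col(C)$. Let $G'$ be obtained from $G$ by deleting every edge $ij'$ with $j'\in N(i)\setminus\{j\}$, and set
\[
A'=\begin{bmatrix} a & \be_j\trans\\ \be_j & C\end{bmatrix}\in\mptn(G').
\]
Then $A'(i)=C$, so $\nul(A'(i))=k+1$. Since $\be_j\notin\Col(C)$, the index $i$ is upper for $A'$, which gives $\nul(A')=k$. The matrix $A'(i)=C$ still has SAP, so \cref{thm:snipchar}(c) shows that $A'$ has $i$-SNIP. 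Hence $(G',i)$ allows $(k,k+1)$ with $i$-SNIP, and so $\excise(G',i)\geq 2k+1$.

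If $\deg(i)\geq 2$, then $(G',i)$ is a proper rooted minor of $(G,i)$, contradicting minimality. Therefore $\deg(i)=1$, i.e.\ $i$ is a leaf.

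The only delicate point is the choice of a single neighbour $j$ with $\be_j\notin\Col(C)$. This works because $\Col(C)$ is a subspace, so it cannot contain every $\be_j$ with $j\in N(i)$ while missing their combination $\bb$. The rest follows directly from \cref{thm:snipchar} and the staircase lemmas.
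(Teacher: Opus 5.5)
Your proof is correct and follows the paper's argument: replace the root's column $\bb$ by a single $\be_j$ with $\be_j\notin\Col(C)$, transfer $i$-SNIP through SAP of $C$ via \cref{thm:snipchar}(c), and delete the other edges at the root. The differences are cosmetic. The paper picks $j$ in the common support of $\bb$ and a vector $\bv\in\ker(C)$ with $\bb\trans\bv\neq 0$, which is the dual form of your subspace argument. You also spell out the West/South staircase reduction to the pair $(k,k+1)$, which the paper uses without comment.
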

\begin{proof}
For fixed $k \geq 0$, let $(G,i)$ achieve $\excise(G, i) \geq 2k + 1$ as a minimal minor.
Without loss of generality, we may assume that $i = 1$.  Let $A$ be a matrix in $\mptn(G)$ with $i$-SNIP that achieves the $i$-nullity pair $(k,k+1)$.  Since $k \geq 0$ and $\nul(A(i)) = k + 1 \geq 1$, $G - i$ has at least one vertex, and we may assume $G$ has $n + 1$ vertices with $n \geq 0$.  We may write  
\[
    A = \begin{bmatrix}
    a_{1,1} & \bb\trans \\
    \bb & C
    \end{bmatrix}.
\]
Since $i$ is an upper index, we have $\bb\notin\Col(C)$; equivalently, there is a vector $\bv\in\ker(A)$ such that $\bb\trans\bv \neq 0$.  Let $j$ be an index where $\bb$ and $\bv$ are nonzero and let $\be_j\in\mathbb{R}^{n}$ be the vector whose entry $j$ is equal to $1$ while all other entries are zero.  Then the matrix  
\[
    A' = \begin{bmatrix}
    a_{1,1} & \be_j\trans \\
    \be_j & C
    \end{bmatrix}
\]
has the $i$-nullity pair $(k,k+1)$ as well since $\be_j\notin\Col(C)$.  

Since $i$ is an upper index of $A$ and $A'$, $A$ has $i$-SNIP if and only if $C$ has SAP if and only if $A'$ has $i$-SNIP by \cref{thm:snipchar}.  Now let $G'$ be the graph obtained from $G$ by removing all edges incident to $i$ except for $\{i,j\}$.  Then $A'\in\mptn(G')$, and $(G',i)$ is a rooted minor of $(G,i)$ that allows the same nullity pair $(k,k+1)$ with $i$-SNIP.  This means that if $i$ is not a leaf, then $(G,i)$ is not minor minimal for $\excise \geq 2k + 1$.  Equivalently, every minimal minor for $\excise(G, i) \geq 2k + 1$ has the root at a leaf.
\end{proof}

The next lemma makes a connection between the odd case and the even case.

\begin{lemma}
\label{lem:updown}
Let $G$ be a graph on $n + 1$ vertices, let $i\in V(G)$ be a leaf of $G$ with $j$ its unique neighbor, and let $H = G - i$.
Let $A$ be a matrix in $(G)$ with $B = A(i)$ a matrix in $\mptn(H)$.  Then $i$ is an upper index of $A$ if and only if $j$ is a downer index of $B$.  Moreover, $A$ has $i$-SNIP if and only if $B$ has $j$-SNIP.
\end{lemma}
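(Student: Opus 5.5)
Without loss of generality I will put the leaf first, taking $i=1$ and $j=2$, so that
\[
A=\begin{bmatrix} a & c\,\be_j\trans \\ c\,\be_j & B\end{bmatrix},\qquad c\neq 0,
\]
with $B=A(i)\in\mptn(H)$.

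\textbf{Upper/downer equivalence.} By the discussion in \cref{lem:oddleaf}, $i$ is an upper index of $A$ if and only if the $i$-th column's off-diagonal part is outside $\Col(B)$, i.e.\ $c\,\be_j\notin\Col(B)$. By symmetry of $B$, $\be_j\notin\Col(B)=\ker(B)^\perp$ holds if and only if some $\bv\in\ker(B)$ has $(\bv)_j\neq 0$.

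Such a $\bv$ exists exactly when column $j$ of $B$ is a linear combination of the other columns. By the argument already used in \cref{thm:sw}, this is equivalent to $\nul(B(j))=\nul(B)-1$. Conversely, if every kernel vector vanishes at $j$, then deleting row and column $j$ does not decrease the nullity, so $j$ is not downer. Hence $i$ is upper for $A$ if and only if $j$ is downer for $B$.

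\textbf{SNIP equivalence.} I will use \cref{thm:snipchar}, splitting into two cases.

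If $i$ is upper for $A$, then by part (c) $A$ has $i$-SNIP if and only if $B=A(i)$ has SAP. In this case $j$ is downer for $B$, so by part (a) $B$ has $j$-SNIP if and only if $B$ has SAP. The two conditions therefore coincide.

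The remaining case is when $i$ is not upper for $A$ and $j$ is not downer for $B$. Here I expect to argue directly from the definition, and this is the main obstacle. The plan is to compare the two systems of equations.

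For $A$ with $i$-SNIP, write $X=\begin{bmatrix} 0 & \bx\trans\\ \bx & Y\end{bmatrix}$. The condition $A\circ X=O$ forces $(\bx)_j=0$, and $I\circ X=O$ forces $Y$ to have zero diagonal and $Y\circ B=O$. The equation $(AX)(i,:]=O$ then reads
\[
c\,\be_j\cdot 0 + B\bx=\bzero, \qquad c\,\be_j\bx\trans + BY=O.
\]
The second equation says that $BY$ vanishes off row $j$, i.e.\ $(BY)(j,:]=O$, and that row $j$ of $BY$ equals $-c\,\bx\trans$.

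\textbf{Forward direction.} Suppose $A$ has $i$-SNIP and $Y$ witnesses a failure of $j$-SNIP for $B$, so $Y\neq O$ and $(BY)(j,:]=O$. I will set $\bx=-\tfrac{1}{c}(BY)[j,:]\trans$ and check that the resulting $X$ satisfies the equations for $A$.

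\begin{itemize}[label={}]
\item $B\bx=\bzero$: since $BY$ is zero outside row $j$, we have $BY=\be_j\by\trans$ with $\by=(BY)[j,:]\trans$. Then $\by\trans=\be_j\trans BY$. Also $YB=(BY)\trans=\by\be_j\trans$, so $YB\bx\propto YB\by$. I need $B\by=\bzero$, and I expect to derive this from symmetry of $YBY$ or a similar trace identity.
\item $(\bx)_j=0$: I need $(BY)_{jj}=0$. For this I plan to use $\operatorname{tr}(BY)=0$, which holds because $Y$ has zero diagonal and $B\circ Y=O$.
\end{itemize}

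\textbf{Reverse direction.} Given a nonzero $X$ for $A$, I will take $Y$ from it. If $Y=O$, then $c\,\be_j\bx\trans=O$ forces $\bx=\bzero$, contradicting $X\neq O$. So $Y\neq O$, and $Y$ witnesses a failure of $j$-SNIP for $B$.

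If verifying $B\by=\bzero$ in the forward direction becomes messy, I will fall back on cases (a) and (b) of \cref{thm:snipchar}, relating SAP of $A+tE_{i,i}$ to SAP of $B+sE_{j,j}$ via the Schur complement at the leaf $i$. That complement only modifies the $(j,j)$ entry, by $-c^2/a$.
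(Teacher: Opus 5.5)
Your index argument is correct. You go directly through $\Col(B)=\ker(B)^{\perp}$: $j$ is downer for $B$ exactly when some kernel vector of $B$ is nonzero at $j$. That is a clean alternative to the paper's three-way case split on whether $j$ is upper, neutral or downer for $B$. For $i$-SNIP when $i$ is upper, you use part (c) of \cref{thm:snipchar} for $A$ and then part (a) for $B$; this is exactly the paper's argument. It is also all the paper proves: its final sentence treats only the upper case, which is the only case used in \cref{cor:updown}. In the remaining case, your ``reverse direction'' is correct: a nonzero $X$ for $A$ yields a nonzero $Y$ for $B$, so $j$-SNIP of $B$ implies $i$-SNIP of $A$.

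The gap is the forward direction when $i$ is not upper, and it cannot be closed because that implication is false. Take $G=K_{1,3}$ with $i$ a leaf, $j$ the center and $p,q$ the other leaves. In the order $(i,j,p,q)$, let
\[
A=\begin{bmatrix} a & c & 0 & 0\\ c & b & 1 & 1\\ 0 & 1 & 0 & 0\\ 0 & 1 & 0 & 0\end{bmatrix},\qquad c\neq 0 .
\]
A symmetric $X$ with $A\circ X=I\circ X=O$ has only $x_1=X_{ip}$, $x_2=X_{iq}$ and $z=X_{pq}$ free. Rows $p$ and $q$ of $AX$ vanish identically, and row $j$ of $AX$ is $(x_1+x_2,\,0,\,cx_1+z,\,cx_2+z)$. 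Setting this to zero forces $X=O$, so $A$ has $i$-SNIP. However, $B=A(i)$ is $P_3$ rooted at its center, where $j$ is an upper index, and $B$ fails $j$-SNIP: $Y=E_{p,q}+E_{q,p}$ gives $(BY)(j,:]=O$.

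In your notation $\by=(0,1,1)\trans$ and $B\by=2\be_j\neq\bzero$, so your step ``$B\by=\bzero$'' is exactly where the argument breaks. Symmetry of $BYB=\be_j(B\by)\trans$ only gives $B\by=\lambda\be_j$.

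When $j$ is neutral for $B$ you can finish. Write $\be_j=B\bw$ with $w_j=\bw\trans B\bw\neq 0$. Kernel vectors of $B$ vanish at $j$, so $\by-\lambda\bw\in\ker(B)$ gives $0=y_j=\lambda w_j$, hence $\lambda=0$. When $j$ is upper for $B$, however, $w_j=0$ and $\lambda$ is unconstrained, as the example shows.

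Your Schur-complement fallback fails in the same case. There the value making $i$ downer is $a+t=c^2w_j=0$, so there is no invertible $(i,i)$ pivot. Moreover, \cref{thm:snipchar} for $B$ then involves $B(j)$, not $B+sE_{j,j}$.

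So the ``moreover'' clause is only valid under the hypothesis that $i$ is upper, or more generally that $j$ is not upper for $B$. You should restrict the claim accordingly and drop the attempted general case rather than try to complete it.
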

\begin{proof}
For convenience, we may assume $i = 1$, $j = 2$, and write 
\[
    A = \begin{bmatrix}
        a_{1,1} & k & \bzero\trans \\
        k & a_{2,2} & \bb\trans \\
        \bzero & \bb & C.
    \end{bmatrix}.
\]
Note that $A(i,i] = k\be_j\in\mathbb{R}^n$.  Recall that $i$ is an upper index of $A$ if and only if $k\be_j\notin\Col(B)$.  Now we study the cases when $j$ is upper, neutral, and downer for $B$.  

When $j$ is upper, we have $\bb\notin\Col(C)$.  This means that there is $\bv\in\ker(C)$ such that $\bb\trans\bv\neq 0$.  Thus, 
\[
    B\begin{bmatrix} 0 \\ \bv \end{bmatrix} = 
    \begin{bmatrix}
    a_{2,2} & \bb\trans \\
    \bb & C
    \end{bmatrix}\begin{bmatrix} 0 \\ \bv \end{bmatrix} = 
    \begin{bmatrix} \bb\trans\bv \\ \bzero \end{bmatrix}.
\]
In this case, $k\be_j\in\Col(B)$.

When $j$ is neutral, we have $\bb = C\bx$ for some $\bx$ and $a_{2,2} = \bx\trans C\bx + p$ for some $p \neq 0$.  Thus we have
\[
    B\begin{bmatrix} 1 \\ -\bx \end{bmatrix} = 
    \begin{bmatrix}
    \bx\trans C\bx + p & \bx\trans C \\
    C\bx & C
    \end{bmatrix}\begin{bmatrix} 1 \\ -\bx \end{bmatrix} = 
    \begin{bmatrix} p \\ \bzero \end{bmatrix}.
\]
In this case, $k\be_j\in\Col(B)$.

When $j$ is downer, we have $\bb = C\bx$ for some $\bx$ and $a_{2,2} = \bx\trans C\bx$.  Then $\bv = \begin{bmatrix} 1 \\ -\bx \end{bmatrix}$ is in $\ker(C)$ and $k\be_j\trans\bv = k \neq 0$.  Thus, in this case, $k\be_j\notin\Col(B)$.

As we have gone through all the possible cases, we know that $i$ is an upper index of $A$ if and only if $k\be_j\notin\Col(B)$ if and only if $j$ is a downer index of $B$.

Finally, $A$ with the upper index $i$ has $i$-SNIP if and only if $A(i) = B$ has SAP if and only if $B$ with the downer index $j$ has $j$-SNIP by \cref{thm:snipchar}.
\end{proof}

\begin{corollary}
\label{cor:updown}
Let $(G,i)$ be obtained from $(H,j)$ by extending the root.  Then $\excise(G,i) \geq 2k + 1$ if and only if $\excise(H,j) \geq 2k$.  
\end{corollary}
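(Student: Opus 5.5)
The plan is to reduce both directions to \cref{lem:updown} together with the Bifurcation consequences (\cref{prop:kk1}, \cref{lem:west}, \cref{lem:south}). The key observation is that, by the staircase results of \cref{sec:bif}, we have $\excise(G,i)\geq 2k+1$ if and only if $(G,i)$ allows $(k,k+1)$ with $i$-SNIP. Indeed, any pair $(k',\ell')$ with $k'\le \ell'$ and $k'+\ell'\ge 2k+1$ can be walked down to $(k,k+1)$ by repeated West and South steps, all of which preserve $i$-SNIP. Similarly, $\excise(H,j)\geq 2k$ if and only if $(H,j)$ allows $(k,k)$ with $j$-SNIP, and by \cref{prop:kk1} this happens if and only if $(H,j)$ allows $(k+1,k)$ with $j$-SNIP.

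For the forward direction, take $A\in\mptn(G)$ with $i$-SNIP and $i$-nullity pair $(k,k+1)$, and set $B=A(i)\in\mptn(H)$. Then $\nul(B)=k+1$. Since $i$ is an upper index of $A$, \cref{lem:updown} says that $j$ is a downer index of $B$, so $\nul(B(j))=k$. The same lemma gives that $B$ has $j$-SNIP. Hence $(H,j)$ allows $(k+1,k)$ with $j$-SNIP, and therefore $\excise(H,j)\ge 2k$.

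For the converse, take $B\in\mptn(H)$ with $j$-SNIP and $j$-nullity pair $(k+1,k)$, so that $j$ is a downer index of $B$. Build
\[
A=\begin{bmatrix} a & \be_j\trans \\ \be_j & B\end{bmatrix}\in\mptn(G),
\]
with $a$ arbitrary. By \cref{lem:updown}, $i$ is an upper index of $A$ and $A$ has $i$-SNIP. Then $\nul(A(i))=\nul(B)=k+1$ and $\nul(A)=k$, so $(G,i)$ allows $(k,k+1)$ with $i$-SNIP, giving $\excise(G,i)\ge 2k+1$.

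The main point needing care is the opening reduction, namely that $\excise\ge s$ forces the specific pair on the line with sum $s$, which requires the monotone descent along the staircase with $i$-SNIP preserved. This is exactly what the West and South Lemmas provide, in the order $(k',k')\to(k'-1,k')\to(k'-1,k'-1)\to\cdots$. The remaining checks are that \cref{lem:updown} applies with $j$ the unique neighbor of $i$, which is immediate from the construction of $G$ by extending the root, and the degenerate case $k=0$, where $H$ may be a single vertex; this is handled by the same argument.
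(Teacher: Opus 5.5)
Your proof is correct and follows the paper's argument: in both directions you pass through the pair $(k+1,k)$ for $(H,j)$ via \cref{prop:kk1} and transfer it across the leaf using \cref{lem:updown}. You also spell out two steps the paper leaves implicit. The first is the West/South descent showing that $\excise\ge s$ forces a pair with sum exactly $s$. The second is the explicit bordered matrix $A$ built from $B$ in the converse.
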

\begin{proof}
Suppose $\excise(G,i) \geq 2k + 1$.  Then $(G,i)$ allows $(k,k+1)$ with $i$-SNIP.  By \cref{lem:updown}, $(H,j)$ allows $(k+1,k)$ with $i$-SNIP.  By \cref{prop:kk1}, $(H,j)$ allows $(k,k)$ with $i$-SNIP and $\excise(H,j) \geq 2k$.  

Conversely, suppose $\excise(H,j) \geq 2k$.  Then $(H,j)$ allows $(k,k)$ with $i$-SNIP.  By \cref{prop:kk1}, $(H,j)$ allows $(k+1,k)$ with $i$-SNIP as well.  By \cref{lem:updown}, $(G,i)$ allows $(k,k+1)$ with $i$-SNIP.
\end{proof}

Note that $\excise(P_3, \text{leaf}) = \excise(P_2, \text{leaf}) = 1$, so $\excise(G,i) = 2k + 1$ does not necessarily imply $\excise(H,j) = 2k$.

Now we show that the minimal minors for $\excise(G, i) \geq 2k$ and $\excise(G, i) \geq 2k + 1$ can be derived from each other.

\begin{theorem}
\label{thm:2k2kp1}
For each $k \geq 0$, the minimal minors for $\excise(G, i) \geq 2k + 1$ are exactly those rooted graphs obtained from the minimal minors for $\excise(G, i) \geq 2k$ by extending their roots.
\end{theorem}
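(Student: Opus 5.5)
We prove the two inclusions separately, using \cref{cor:updown} as the bridge and \cref{lem:oddleaf} to control where roots of odd minimal minors sit. Throughout, write $\mathrm{ext}(H,j)$ for the rooted graph obtained from $(H,j)$ by extending the root.

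\emph{Odd minimal minors come from even ones.} Let $(G,i)$ be a minimal minor for $\excise \geq 2k+1$. By \cref{lem:oddleaf}, $i$ is a leaf; let $j$ be its neighbor and $H = G - i$. Then $(G,i) = \mathrm{ext}(H,j)$, and \cref{cor:updown} gives $\excise(H,j) \geq 2k$. We must show $(H,j)$ is minimal for this property.

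Suppose $(H',j')$ is a proper rooted minor of $(H,j)$ with $\excise(H',j') \geq 2k$. Then $\mathrm{ext}(H',j')$ is a proper rooted minor of $\mathrm{ext}(H,j) = (G,i)$: perform the same sequence of operations on $G$, never touching the pendant edge $\{i,j\}$. Whenever an operation contracts an edge at the current root of $H$, the pendant leaf stays attached to the new merged root, so the root-tracking conventions match. By \cref{cor:updown}, $\excise(\mathrm{ext}(H',j')) \geq 2k+1$, contradicting the minimality of $(G,i)$.

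\emph{Even minimal minors extend to odd ones.} Let $(H,j)$ be minimal for $\excise \geq 2k$ and set $(G,i) = \mathrm{ext}(H,j)$. \cref{cor:updown} gives $\excise(G,i) \geq 2k+1$. Suppose some proper minor of $(G,i)$ satisfies $\excise \geq 2k+1$. By minor monotonicity and finiteness, it contains a minimal minor $(G',i')$ for $\excise \geq 2k+1$, and $(G',i')$ is still a proper minor of $(G,i)$. By the previous part, $(G',i') = \mathrm{ext}(H',j')$ with $\excise(H',j') \geq 2k$.

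This is the main obstacle: we must show that $\mathrm{ext}(H',j')$ being a proper rooted minor of $\mathrm{ext}(H,j)$ forces $(H',j')$ to be a proper rooted minor of $(H,j)$, which contradicts minimality. Fix a minor model: disjoint connected branch sets $B_v$ in $G$ for $v \in V(G')$, with the root $i$ lying in $B_{i'}$. Since $i$ is a leaf adjacent only to $j$, there are two cases.
\begin{itemize}[label={}]
\item If $B_{i'} = \{i\}$, then $B_{j'}$ must contain $j$, because $i'$ has a neighbor in $G'$ and only $j$ is adjacent to $i$. Deleting $i$ then yields a model of $(H',j')$ in $(H,j)$ with the root in $B_{j'}$.
\item If $B_{i'} \supsetneq \{i\}$, then $j \in B_{i'}$. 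The sets $B_{i'} \setminus \{i\}$ and $B_{j'}$ are disjoint, connected and adjacent, so we can merge them. Together with the remaining branch sets, this gives a rooted model of $(H',j')$ (minus the pendant leaf) in $H$, rooted at $j$.
\end{itemize}
In either case $(H',j')$ is a rooted minor of $(H,j)$.

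It remains to check properness. If $(H',j') \cong (H,j)$, then $|V(G')| = |V(G)|$ and $|E(G')| = |E(G)|$, so $(G',i')$ would not be proper. Hence $(H',j')$ is a proper minor, contradicting the minimality of $(H,j)$. The delicate point is making this branch-set argument rigorous, in particular tracking the root through contractions, which is why we use the model formulation of rooted minors.
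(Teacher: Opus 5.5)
Your proof is correct. The first half (odd minimal minors are extensions of even minimal minors) is the paper's argument, with a little more care about why $\mathrm{ext}(H',j')$ is a proper rooted minor of $\mathrm{ext}(H,j)$.

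The second half is organized differently. The paper takes an arbitrary proper rooted minor $(G',i')$ of $\mathrm{ext}(H,j)$ and splits on whether $i'$ is a leaf.
\begin{itemize}
\item If $i'$ is not a leaf, the pendant edge $\{i,j\}$ was either deleted or contracted. In the first case the root is isolated and $\excise=0$. In the second case $(G',i')$ is a minor of $(H,j)$, so $\excise(G',i')\le\excise(H,j)\le 2k$.
\item If $i'$ is a leaf, the paper simply asserts that $(G'-i',j')$ is a proper minor of $(H,j)$ and applies \cref{lem:updown}.
\end{itemize}

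You instead descend to a minimal minor for $\excise\ge 2k+1$ inside the offending proper minor. By \cref{lem:oddleaf} (through your first half) its root is a leaf, so the non-leaf case never arises. You then justify the transfer back to $(H,j)$ with an explicit branch-set argument.

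The paper's version is shorter, but its contracted case silently uses $\excise(H,j)\le 2k$. That does not follow from the minimality of $(H,j)$ for $\excise\ge 2k$ alone. It needs essentially your idea: a minimal minor for $\excise\ge 2k+1$ inside $(H,j)$ has a leaf root, and contracting its pendant edge gives a proper minor with $\excise\ge 2k$ by \cref{cor:updown}.

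Your reduction avoids this hidden step and also supplies the detail the paper omits in the leaf case. The cost is a little extra machinery: well-foundedness of the minor order and the branch-set formulation of rooted minors.
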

\begin{proof}
Let $(G,i)$ achieve $\excise(G, i) \geq 2k + 1$ as a minimal minor.
By \cref{lem:oddleaf}, $i$ is a leaf of $G$.  Let $j$ be the unique neighbor of $i$ in $G$ and let $H = G - i$.  By \cref{cor:updown}, $\excise(H,j) \geq 2k$.  If $(H',j')$ is a proper rooted minor of $(H,j)$, then the rooted graph $(G',i')$ obtained from $(H',j')$ by extending the root is a proper rooted minor of $(G,i)$.  If $\excise(H',j') \geq 2k$, then  $\excise(G',i') \geq 2k+1$ by \cref{cor:updown}, violating the minimality of $(G,i)$.  Thus, $(H,j)$ achieves $\excise(H, j) \geq 2k$ as a minimal minor.  

Conversely, suppose that $(H,j)$ achieves $\excise(H, j) \geq 2k$ as a minimal minor.  Let $(G,i)$ be obtained from $(H,j)$ by extending the root.  By \cref{cor:updown}, $\excise(G,i) \geq 2k + 1$.  Let $(G',i')$ be a proper minor of $(G,i)$.  If $i'$ is not a leaf, then the edge $\{i,j\}$ was either deleted or contracted in $(G,i)$ in order to obtain $(G',i')$.  In the former case, $i = i'$ is an isolated vertex and $\excise(G',i') = 0 < 2k+1$ by \cref{prop:disjoint}.  In the latter case, $(G',i')$ is a minor of $(H,j)$, which means $\excise(G',i') \leq \excise(H,j) \leq 2k$.  If $i'$ is a leaf with the unique neighbor $j'$,  then the rooted graph $(H',j')$ with $H' = G' - i'$ is a proper minor of $(H,j)$.  If $\excise(G',i') \geq 2k + 1$, then $\excise(H',j') \geq 2k$ by \cref{lem:updown}, violating the minimality of $(H,j)$.  Thus, $(G,i)$ achieves $\excise(G, i) \geq 2k + 1$ as a minimal minor.
\end{proof}

\section{Schur complement}
\label{sec:schur}
In this section, we provide some general results about the Schur complement that leads to many applications.

Let $A$ be a real symmetric matrix of the form  
\[
    A = \begin{bmatrix}
        Q & B\trans \\
        B & C
    \end{bmatrix}
\]
such that $Q$ is invertible.  Then the \emph{Schur complement} of $Q$ in $A$ is denoted and defined by 
\[
    A/Q = C - BQ^{-1}B\trans.
\]
Note that the notion of the Schur complement is not limited to square matrices in general (though $Q$ has to be square to be invertible), but we only focus on real symmetric cases.  Also, the intuition behind the Schur complement is row and column operations
\[
    \begin{bmatrix}
        I & O \\
        -BQ^{-1} & I
    \end{bmatrix}
    \begin{bmatrix}
        Q & B\trans \\
        B & C
    \end{bmatrix}
    \begin{bmatrix}
        I & -Q^{-1}B\trans \\
        O & I
    \end{bmatrix} = 
    \begin{bmatrix}
    Q & O \\
    O & A / Q
    \end{bmatrix}.
\]
Consequently, we have $\nul(A) = \nul(A/Q)$.  

\begin{proposition}
\label{prop:schur}
Let $A$ be an $n\times n$ real symmetric matrix such that $A[\alpha]$ is invertible for some $\alpha\subseteq[n]$.  Then for any $i\notin\alpha$, $A$ and $A/A[\alpha]$ have the same $i$-nullity pair.
\end{proposition}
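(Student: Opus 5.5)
The plan is to show the two nullity equalities separately: $\nul(A) = \nul(A/A[\alpha])$, and $\nul(A(i)) = \nul\bigl((A/A[\alpha])(i)\bigr)$, where on the right $i$ is read as the index of the same vertex inside the complementary index set $\beta = [n]\setminus\alpha$.

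The first equality is already available. After a simultaneous permutation of rows and columns we may write
\[
A = \begin{bmatrix} Q & B\trans \\ B & C \end{bmatrix}
\]
with $Q = A[\alpha]$. The congruence displayed just before the proposition gives $\nul(A) = \nul(Q \oplus A/Q) = \nul(A/Q)$, since $Q$ is invertible. A permutation similarity changes neither nullities nor principal submatrices up to relabeling, so nothing is lost by reordering.

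For the second equality, the key observation is that deleting row and column $i$ commutes with taking the Schur complement of $Q$, provided $i\notin\alpha$. Concretely, $A(i)$ has the block form
\[
A(i) = \begin{bmatrix} Q & B(i,:]\trans \\ B(i,:] & C(i) \end{bmatrix},
\]
because removing index $i \in \beta$ deletes one row of $B$ and one row and column of $C$, while leaving $Q$ untouched. Hence $A(i)[\alpha] = Q$ is still invertible, and
\[
A(i)/Q = C(i) - B(i,:]\,Q^{-1}B(i,:]\trans = \bigl(C - BQ^{-1}B\trans\bigr)(i) = (A/Q)(i).
\]
The middle equality holds entrywise: the $(p,q)$-entry of $BQ^{-1}B\trans$ depends only on rows $p$ and $q$ of $B$. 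Applying the Schur nullity identity to $A(i)$ then gives $\nul(A(i)) = \nul(A(i)/Q) = \nul((A/Q)(i))$.

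Combining the two equalities shows that the $i$-nullity pairs agree. The only point needing care is the index bookkeeping: $A/A[\alpha]$ is indexed by $\beta$, and its entry for $i$ corresponds to the original index $i$. There is no genuine obstacle here; the argument is a direct computation once this commutation is observed.
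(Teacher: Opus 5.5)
Your proof is correct and follows the paper's argument essentially step for step: the congruence gives $\nul(A)=\nul(A/A[\alpha])$, and the direct computation $(A/A[\alpha])(i) = C(i) - (BA[\alpha]^{-1}B\trans)(i) = A(i)/A[\alpha]$ gives the equality of the second nullities. Your remark that $A(i)[\alpha]=A[\alpha]$ is still invertible, and your entrywise justification of the commutation, fill in details the paper covers only with ``by direct computation.''
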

\begin{proof}
By the property of the Schur complement, $\nul(A) = \nul(A/A[\alpha])$.  Without loss of generality, we may assume $\alpha$ are the first few indices and write  
\[
    A = \begin{bmatrix}
        A[\alpha] & B\trans \\
        B & C
    \end{bmatrix}
\]
and thus $A/A[\alpha] = C - BA[\alpha]^{-1}B\trans$.  By direct computation, 
\[
    (A/A[\alpha])(i) = C(i) - (BA[\alpha]^{-1}B\trans)(i) = A(i)/A[\alpha],
\]
so $\nul((A/A[\alpha])(i)) = \nul(A(i))$.  In summary, $A$ and $A/A[\alpha]$ have the same $i$-nullity pair.
\end{proof}

In \cite[Lemma~3.5]{BFH05} and \cite[Lemma~3.2]{HvdH07}, it was shown that $A$ having SAP implies that $A/A[\alpha]$ also has SAP, under some assumptions.  Here we make a general statement for this behavior and extend it to $i$-SNIP.  


Recall that for a subset $\alpha$ of vertices of $G$,
\[
    N(\alpha) = \bigcup_{v\in\alpha}N(v) \setminus \alpha.
\]

\begin{lemma}
\label{lem:schursnip}
Let $G$ be a graph and $A\in\mptn(G)$.  Suppose $\alpha\subseteq V(G)$ is a subset of vertices such that $A[\alpha]$ is invertible and $A/A[\alpha]\in\mptn(H)$ for some graph $H$ in which $N(\alpha)$ is a clique.  Then $A$ having SAP implies that $A/A[\alpha]$ has SAP, and for any given $i\notin\alpha$, $A$ having $i$-SNIP implies that $A/A[\alpha]$ has $i$-SNIP.  
\end{lemma}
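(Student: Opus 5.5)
We argue by contraposition and lift a witness for the failure of the property from $A/A[\alpha]$ to a witness for $A$. Let $\beta = V(G)\setminus\alpha$, assume $\alpha$ consists of the first indices, and write $A = \begin{bmatrix} Q & B\trans \\ B & C\end{bmatrix}$ with $Q = A[\alpha]$ and $S = A/Q = C - BQ^{-1}B\trans$.

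Suppose $Y$ is a nonzero symmetric matrix indexed by $\beta$ with $S\circ Y = I\circ Y = O$ and either $SY = O$ (for SAP) or $(SY)(i,:] = O$ (for $i$-SNIP). We will set
\[
X = \begin{bmatrix} O & O \\ O & Y\end{bmatrix}
\]
and show that $X$ violates SAP (respectively $i$-SNIP) for $A$.

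Since $X \neq O$ and $I\circ X = O$ hold trivially, the first step is the Hadamard condition $A\circ X = O$, that is, $C\circ Y = O$. Take an off-diagonal position $(u,v)$ in $\beta$ with $Y_{uv}\neq 0$. Because $S\circ Y = O$, we have $S_{uv} = 0$, so $uv$ is not an edge of $H$. Since $N(\alpha)$ is a clique in $H$, $u$ and $v$ cannot both lie in $N(\alpha)$. Hence $B(:,u)=\bzero$ or $B(:,v)=\bzero$; the rows of $B$ are indexed by $\beta$, so concretely row $u$ or row $v$ of $B$ is zero. In either case $(BQ^{-1}B\trans)_{uv} = 0$, so $C_{uv} = S_{uv} = 0$. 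This gives $C\circ Y = O$.

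The main step is the product condition. Compute
\[
AX = \begin{bmatrix} B\trans Y \\ CY\end{bmatrix}.
\]
Every nonzero entry of $Y$ sits at a pair $(u,v)$ as above, where row $u$ of $B$ is zero or row $v$ of $B$ is zero. This alone does not immediately give $B\trans Y = O$, so instead we take an approach that avoids it.

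\textbf{Alternative route.} Choose
\[
X = \begin{bmatrix} Q^{-1}B\trans Y B Q^{-1} & -Q^{-1}B\trans Y \\ -YBQ^{-1} & Y\end{bmatrix},
\]
which is $P Y P\trans$ with $P = \begin{bmatrix} -Q^{-1}B\trans \\ I\end{bmatrix}$. Then $AP = \begin{bmatrix} O \\ S\end{bmatrix}$, so
\[
AX = \begin{bmatrix} O \\ SY \end{bmatrix} P\trans.
\]
Consequently $SY=O$ gives $AX=O$, and $(SY)(i,:]=O$ gives $(AX)(i,:]=O$, since the rows of $AX$ indexed by $\alpha$ vanish and $i\notin\alpha$. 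Also $X\neq O$, because its $\beta$-block is $Y$.

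The obstacle shifts to the conditions $A\circ X = I\circ X = O$ on the new blocks; this is where the clique hypothesis is used.
\begin{enumerate}
\item For the $\beta$-block, the previous argument gives $C\circ Y = O$ and $I\circ Y = O$.
\item For the off-diagonal block, $(YBQ^{-1})_{u,a}$ can be nonzero only if some $v$ with $Y_{uv}\neq0$ lies in $N(\alpha)$. In that case $u\notin N(\alpha)$, because $N(\alpha)$ is a clique in $H$ and $S_{uv}=0$. Hence $B_{u,a}=0$, and the Hadamard product with $A$ vanishes on this block.
\item For the $\alpha$-block, $B\trans Y B = \sum_{u,v} Y_{uv}\, B(u,:)\trans B(v,:)$. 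Each term has $u\notin N(\alpha)$ or $v\notin N(\alpha)$, so $B(u,:)=\bzero$ or $B(v,:)=\bzero$, and every term is zero. Hence this block is $O$.
\end{enumerate}

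Thus $X$ is a nonzero symmetric witness against SAP (respectively $i$-SNIP) for $A$, which proves both implications. We expect the block checks above to be the only delicate point, and each reduces to the clique property via $S_{uv}=0$.
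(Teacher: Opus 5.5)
Your proposal is correct and is essentially the paper's proof. Your witness $PYP^\top$ is exactly the paper's $\widehat{X}=E(O\oplus Y)E^\top$, since $P$ is the $\beta$-block column of $E$. Your clique-based checks of the three blocks also match the paper's. The only cosmetic difference is that you compute $AP$ directly (zero on the $\alpha$-rows, $S$ on the $\beta$-rows) instead of factoring through $(E^\top)^{-1}$. You can delete the abandoned first attempt with $X=O\oplus Y$, since $B^\top Y$ need not vanish.
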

\begin{proof}
Let $a = |\alpha|$ and $b = |N(\alpha)|$.  Without loss of generality, we may assume the first rows/columns are corresponding to $\alpha$ and write 
\[
    A = 
        \begin{bmatrix}
        Q & B\trans \\
        B & C
    \end{bmatrix}
\]
with $Q = A[\alpha]$.  We may further assume the first $b$ rows in $B$ correspond to $N(\alpha)$, so $B$ is zero except for the first $b$ rows.  By the Schur complement, we have 
\[
    \begin{bmatrix}
        I & O \\
        -BQ^{-1} & I
    \end{bmatrix}
    \begin{bmatrix}
        Q & B\trans \\
        B & C
    \end{bmatrix}
    \begin{bmatrix}
        I & -Q^{-1}B\trans \\
        O & I
    \end{bmatrix} = 
    \begin{bmatrix}
    Q & O \\
    O & A / Q
    \end{bmatrix},
\]
which we write as $E\trans A E = Q\oplus (A/Q)$.

Let $X$ be a real symmetric matrix such that $A/Q\circ X = I\circ X = O$.  Define  
\[
    \widehat{X} = E(O\oplus X)E\trans = 
    \begin{bmatrix}
        Q^{-1}B\trans XBQ^{-1} & -Q^{-1}B\trans X\\
        -XBQ^{-1} & X
    \end{bmatrix}.
\]
We claim that $A\circ\widehat{X} = I\circ\widehat{X} = O$.  
Since $A/Q\in\mptn(H)$ and $N(\alpha)$ is a clique in $H$, we have $X[N(\alpha)] = O$, which further implies $C\circ X = O$ as $H$ is a supergraph of $G - \alpha$.  
Since $B$ is zero except for the first $b$ rows, each column of $XB$ is a linear combination of the first $b$ columns of $X$.  The first $b$ columns of $X$ have their first $b$ entries zero by $X[N(\alpha)] = O$, so the first $b$ rows of $XB$ are zero.  Consequently, the first $b$ rows of $-XBQ^{-1}$ and the first $b$ columns of $-Q^{-1}B\trans X$ are zero.  Moreover, as $B\trans$ is zero except for the first $b$ columns and $XB$ is zero on the first $b$ rows, we have $B\trans XB = O$ and $Q^{-1}B\trans XBQ^{-1} = O$.  Combining all these observations, we have $A\circ\widehat{X} = I\circ\widehat{X} = O$.  

For SAP, it is straightforward to see that if $(A/Q)X = O$, then 
\[
    (E\trans AE)(O\oplus X) = (Q\oplus A/Q)(O\oplus X) = O
\]
and
\[
    A\widehat{X} = AE(O\oplus X)E\trans = (E\trans)^{-1}(E\trans AE)(O\oplus X)E\trans = O.
\]
In summary, if $A/Q\circ X = I\circ X = O$ and $(A/Q)X = O$, then $A\circ\widehat{X} = I\circ\widehat{X} = O$ and $A\widehat{X} = O$.  Suppose $A$ has SAP.  Then $\widehat{X} = O$, which implies $O\oplus X = O$ and $X = O$.  Therefore, $A/Q$ has SAP.  

For $i$-SNIP, we use the notation $\overset{(i,:]}{=} O$ to indicate that a matrix is zero except for possibly the $i$-th row.  Following a similar argument, if $(A/Q) X \overset{(i,:]}{=} O$, then $(E\trans AE)(O\oplus X) \overset{(i,:]}{=} O$, which implies $(E\trans AE)(O\oplus X)E\trans \overset{(i,:]}{=} O$.  Since each row of $(E\trans)^{-1}$ has entry $i$ equal to $0$ except possibly for the $i$-th row,
\[
    A\widehat{X} = AE(O\oplus X)E\trans = (E\trans)^{-1}(E\trans AE)(O\oplus X)E\trans \overset{(i,:]}{=} O.
\]
Therefore, if $A$ has $i$-SNIP, then $A/Q$ has $i$-SNIP.  
\end{proof}

This lemma is particularly useful when $|N(\alpha)| = 1$, which implies that $N(\alpha)$ is a clique in $H$.  

We end this section by showing some reduction formulas of $\excise(G, i)$ with respect to cut-vertices.
Let $G$ and $H$ be two graphs such that there is a vertex labeled $v$ on each of them.  Then the \emph{vertex sum} of $G$ and $H$ at $v$, denoted by $G\oplus_v H$, is the graph obtained from $G$ and $H$ by identifying the vertices labeled by $v$, all other vertices remaining distinct in either $G$ or $H$.
Consequently, $v$ is a cut-vertex of $G\oplus_v H$.

\begin{theorem}
\label{thm:cutvtx}
Let $G = G_1\oplus_i G_2$ be a graph such that $G_1$ and $G_2$ are connected graphs on at least $2$ vertices.  Then  
\[
    \excise(G,i) = \max\{\excise(G_1,i), \excise(G_2,i)\}.
\]
\end{theorem}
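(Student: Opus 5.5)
The inequality $\excise(G,i) \geq \max\{\excise(G_1,i), \excise(G_2,i)\}$ should follow from minor monotonicity. Deleting all vertices of $G_2$ other than $i$ (none of which is the root) turns $(G,i)$ into $(G_1,i)$, so $(G_1,i)$ is a rooted minor of $(G,i)$. Symmetrically, $(G_2,i)$ is a rooted minor of $(G,i)$. The work is therefore in the reverse inequality. Fix $A\in\mptn(G)$ with $i$-SNIP achieving a nullity pair $(k,\ell)$ with $k\leq\ell$ and $k+\ell=\excise(G,i)$. I will show that $(G_1,i)$ or $(G_2,i)$ allows the same pair $(k,\ell)$ with $i$-SNIP.

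Write $\alpha_1 = V(G_1)\setminus\{i\}$ and $\alpha_2 = V(G_2)\setminus\{i\}$. Because $i$ is a cut-vertex separating these sets, $A(i) = A[\alpha_1]\oplus A[\alpha_2]$ after reordering. By \cite[Lemma~3.6]{SNIP}, $i$-SNIP of $A$ implies that $A(i)$ has SAP.

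The key step is to show that at least one of $A[\alpha_1]$ and $A[\alpha_2]$ is invertible. Suppose instead there are nonzero $\bx\in\ker(A[\alpha_1])$ and $\by\in\ker(A[\alpha_2])$. Set
\[
X = \begin{bmatrix} O & \bx\by\trans \\ \by\bx\trans & O\end{bmatrix},
\]
which is a nonzero symmetric matrix. Its support lies in the $\alpha_1\times\alpha_2$ blocks, where $A(i)$ is zero (there are no edges between $\alpha_1$ and $\alpha_2$) and the diagonal is zero. A direct computation gives $A(i)X = O$. This contradicts SAP of $A(i)$. This is the step I regard as the crux, though it is the standard argument that a direct sum with SAP has at most one singular summand.

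Without loss of generality, let $Q = A[\alpha_2]$ be invertible. I then take the Schur complement $A/Q$, indexed by $V(G_1)$. The only rows of $A$ outside $\alpha_2$ that touch $\alpha_2$ are in row $i$, since $N(\alpha_2)=\{i\}$. Hence $BQ^{-1}B\trans$ is supported on the single entry $(i,i)$, and $A/Q$ differs from $A[V(G_1)]$ only in that diagonal entry. So $A/Q\in\mptn(G_1)$.

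By \cref{prop:schur}, $A/Q$ has the same $i$-nullity pair $(k,\ell)$ as $A$, since $i\notin\alpha_2$. By \cref{lem:schursnip}, applied with $H=G_1$, in which $N(\alpha_2)=\{i\}$ is trivially a clique, $A/Q$ has $i$-SNIP. Therefore $\excise(G_1,i)\geq k+\ell = \excise(G,i)$, which completes the reverse inequality.
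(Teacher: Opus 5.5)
Your proposal is correct and follows the paper's proof: minor monotonicity for one inequality, then SAP of $A(i)=A[\alpha_1]\oplus A[\alpha_2]$ forces one block to be invertible, and the Schur complement of that block lands in $\mptn(G_1)$ or $\mptn(G_2)$ with the same $i$-nullity pair and $i$-SNIP by \cref{prop:schur} and \cref{lem:schursnip}. The only differences are that you prove the invertibility step directly with the matrix $X$ built from $\bx\by\trans$, which the paper instead cites as \cite[Lemma~3.1]{BFH05}, and that you check membership $A/Q\in\mptn(G_1)$ explicitly.
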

\begin{proof}
Since $G_1$ and $G_2$ are subgraphs of $G$, we have $\excise(G_1, i) \leq \excise(G, i)$ and $\excise(G_2, i) \leq \excise(G, i)$ by minor monotonicity.  

On the other hand, let $A\in\mptn(G)$ be a matrix whose $i$-nullity pair is $(k,\ell)$ with $k + \ell = \excise(G, i)$.  By reordering the vertices we may assume $A$ has the form  
\[
    A = \begin{bmatrix}
        C_1 & \bb_1 & O \\
        \bb_1\trans & a_{i,i} & \bb_2\trans \\
        O & \bb_2 & C_2
    \end{bmatrix}.
\]
Since $A$ has $i$-SNIP, $A(i) = C_1\oplus C_2$ has SAP.  By \cite[Lemma~3.1]{BFH05}, one of $C_1$ and $C_2$ is invertible.  If $C_1$ is invertible, then $A/C_1\in\mptn(G_2)$ has the same $i$-nullity pair as $A$ and has $i$-SNIP  by \cref{prop:schur} and \cref{lem:schursnip}, which means $\excise(G,i) \leq \excise(G_2, i)$.  Similarly, if $C_2$ is invertible, we have $\excise(G, i) \leq \excise(G_1, i)$.  The desired equality therefore holds. 
\end{proof}

\begin{corollary}
Let $G$ be a graph with a cut-vertex $i$
such that $\excise(G, i) = k$.  Then $(G,i)$ does not achieve $\excise(G, i) \ge k$ as a minimal minor.
\end{corollary}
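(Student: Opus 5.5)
The plan is to apply \cref{thm:cutvtx} directly, after decomposing $G$ at the cut-vertex $i$ and reducing to connected pieces where necessary. Since $i$ is a cut-vertex, $G - i$ has at least two components, or $G$ itself is disconnected with $i$ a cut-vertex of its own component.

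First, I would dispose of the disconnected case. If $G$ is disconnected, let $G_0$ be the component containing $i$. By \cref{prop:disjoint}, $\excise(G_0,i) = \excise(G,i) = k$. Whenever $G_0 \neq G$, the rooted graph $(G_0,i)$ is a proper rooted minor of $(G,i)$, obtained by deleting the other components. Hence $(G,i)$ is not a minimal minor. So we may assume $G$ is connected and $i$ is a cut-vertex of $G$.

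In the connected case, pick one component $D$ of $G - i$. Let $G_1 = G[D \cup \{i\}]$ and let $G_2 = G - D$. Both are connected, since $G$ is connected and every component of $G - i$ attaches to $i$. Both have at least $2$ vertices, since $D$ is nonempty and $G_2$ contains $i$ together with some other component. Moreover, $G = G_1 \oplus_i G_2$. By \cref{thm:cutvtx}, $k = \excise(G,i) = \max\{\excise(G_1,i), \excise(G_2,i)\}$, so some $G_t$ satisfies $\excise(G_t,i) = k \ge k$. Each $(G_t,i)$ is obtained from $(G,i)$ by deleting non-root vertices, so it is a proper rooted minor, because the other piece contributes at least one vertex besides $i$. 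This contradicts minimality.

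There is no serious obstacle here. The only care needed is to verify the hypotheses of \cref{thm:cutvtx}, namely connectivity and the requirement of at least $2$ vertices in each piece, and to confirm that deleting the vertices of one side is a legitimate rooted minor operation that never removes the root.
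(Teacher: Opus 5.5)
Your proof is correct and follows the same route as the paper: write $G = G_1\oplus_i G_2$, apply \cref{thm:cutvtx} to get a piece $(G_t,i)$ with $\excise(G_t,i)=k$, and observe that this piece is a proper rooted minor of $(G,i)$. You are more careful than the paper's one-line ``we may assume $G=G_1\oplus_i G_2$.'' You handle the disconnected case via \cref{prop:disjoint}, and you check the connectivity and two-vertex hypotheses of \cref{thm:cutvtx} explicitly.
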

\begin{proof}
We may assume that $G = G_1\oplus_i G_2$.  By \cref{thm:cutvtx},
at least one of
\[
\excise(G_1, i) = k \ \ \mbox{ or }\ \ \excise(G_2, i) = k
\]
must hold.
But $(G_1,i)$ or $(G_2,i)$ are both proper subgraphs of $G$ with the same root vertex. 
\end{proof}

\begin{theorem}
Let $G = G_1\oplus_{v_1}G_0\oplus_{v_2}G_2$ be a graph such that $G_1$ and $G_2$ are connected graphs on at least $2$ vertices. (We allow $v_1 = v_2$.)
Let $i\in V(G_0)$.  Then 
\[
    \excise(G,i) = \max\{
    \excise(K_2\oplus_{v_1}G_0\oplus_{v_2}G_2, i), 
    \excise(G_1\oplus_{v_1}G_0\oplus_{v_2}K_2, i)
    \}.
\]
\end{theorem}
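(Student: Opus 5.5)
The inequality ``$\geq$'' comes from minor monotonicity. The graph $K_2\oplus_{v_1}G_0\oplus_{v_2}G_2$ is a rooted minor of $(G,i)$ with root $i\in V(G_0)$. To see this, note that $G_1$ is connected on at least $2$ vertices. Choose a neighbor $w$ of $v_1$ in $G_1$. Delete every vertex of $G_1$ other than $v_1$ and $w$, along with the edges of $G_1$ other than $\{v_1,w\}$. The root is never touched, since $i\in V(G_0)$ and no deleted vertex is $i$. The same works for the other graph, so both maxima terms are at most $\excise(G,i)$.

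For ``$\leq$'', I would imitate the proof of \cref{thm:cutvtx}. Take $A\in\mptn(G)$ with $i$-SNIP and $i$-nullity pair $(k,\ell)$, where $k\leq\ell$ and $k+\ell=\excise(G,i)$. Write $C_1=A[V(G_1)\setminus\{v_1\}]$ and $C_2=A[V(G_2)\setminus\{v_2\}]$. The key step is to show that $C_1$ or $C_2$ is invertible. Suppose $C_1$ is invertible. Then $\alpha=V(G_1)\setminus\{v_1\}$ has $N(\alpha)=\{v_1\}$, which is a clique. Also $i\notin\alpha$, except in the degenerate case $i=v_1$; there the set $\alpha$ still avoids $i$, so it is fine. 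By \cref{prop:schur} and \cref{lem:schursnip}, $A/C_1$ has the same $i$-nullity pair and has $i$-SNIP. Moreover $A/C_1$ differs from $A$ only in the $(v_1,v_1)$ entry, so $A/C_1\in\mptn(G_0\oplus_{v_2}G_2)$. This gives $\excise(G,i)\leq\excise(G_0\oplus_{v_2}G_2,i)\leq\excise(K_2\oplus_{v_1}G_0\oplus_{v_2}G_2,i)$, where the last step is monotonicity again (deleting the pendant edge). The case where $C_2$ is invertible is symmetric.

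The main obstacle is proving that one of $C_1,C_2$ is invertible. In \cref{thm:cutvtx} this came from $A(i)$ having SAP together with \cite[Lemma~3.1]{BFH05}, a direct sum fact. Here the pieces are not direct summands of $A(i)$. I would argue directly from the definition of $i$-SNIP. Suppose $\bx\in\ker(C_1)$ and $\by\in\ker(C_2)$ are both nonzero, padded by zeros to vectors in $\mathbb{R}^n$. Set $X=\bx\by\trans+\by\bx\trans$. This $X$ is supported on $(V(G_1)\setminus v_1)\times(V(G_2)\setminus v_2)$ and its transpose, which is disjoint from the diagonal and from all edges of $G$ (there are no edges between these sets). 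Hence $A\circ X=I\circ X=O$. Now compute $AX=(A\bx)\by\trans+(A\by)\bx\trans$. The vector $A\bx$ is supported only on $v_1$, with value $\bb_1\trans\bx$, because $C_1\bx=0$ and $\alpha$ has only $v_1$ as neighbor; similarly $A\by$ is supported on $v_2$. So $AX$ is nonzero only in rows $v_1,v_2$. I then need to adjust $\bx,\by$, or use the kernel structure, so that the rows $v_1,v_2$ vanish, except possibly row $i$.

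To handle those rows, I would split into cases. If $\bb_1\trans\bx=0$ for some nonzero $\bx\in\ker C_1$ (and similarly for $\by$), then $AX=O$ and $i$-SNIP gives $X=O$, a contradiction. If $\ker C_1$ has dimension at least $2$, such an $\bx$ exists. Otherwise both kernels are $1$-dimensional with $\bb_1\trans\bx\neq0$ and $\bb_2\trans\by\neq0$. In that case I would pass to a Schur complement over the invertible parts, or use the null space form of $i$-SNIP from \cref{sec:snipthy}, to derive a contradiction. The remaining case (the pairing failing only at rows $v_1\neq v_2$, neither equal to $i$) is where I expect genuine work, and I might need to first perturb $A$ via the Bifurcation Lemma so that each $C_j$ is either invertible or has such an $\bx$.
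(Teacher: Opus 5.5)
Your ``$\geq$'' direction, your Case~1 (one of $C_1,C_2$ invertible), and your rank-two matrix $X$ are fine and essentially the paper's. The rank-two $X$ rules out that both $\ker(C_1)\cap\ker(\bb_1\trans)$ and $\ker(C_2)\cap\ker(\bb_2\trans)$ are nontrivial.

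The gap is your declared key step. It is \emph{false} that one of $C_1,C_2$ must be invertible. So the case where both kernels are one-dimensional with $\bb_1\trans\bx\neq 0$ and $\bb_2\trans\by\neq 0$ cannot be closed by a contradiction, by the null space form, or by a Bifurcation-Lemma perturbation. If your key step held, your argument would prove the stronger identity $\excise(G,i)=\max\{\excise(G_0\oplus_{v_2}G_2,i),\excise(G_1\oplus_{v_1}G_0,i)\}$, with no pendant $K_2$.

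A counterexample: let $G=K_{1,3}$ rooted at a leaf $i$, with center $c=v_1=v_2$, $G_0$ the edge $\{i,c\}$, and $G_1,G_2$ the other two pendant edges. Here $\excise(G,i)=2$. But $G_0\oplus_c G_2$ and $G_1\oplus_c G_0$ are both $P_3$ rooted at a leaf, with $\excise=1$. Concretely, the adjacency matrix of $K_{1,3}$ plus $E_{i,i}$ has $i$-SNIP and $i$-nullity pair $(1,1)$. It has $C_1=C_2=(0)$, which is exactly the configuration you hoped to exclude. In fact every $i$-SNIP matrix with pair $(1,1)$ on this rooted graph has both $C_j$ singular, since otherwise your Case~1 reduction would land in $(P_3,\text{leaf})$. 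This is why the $K_2$ appears in the statement.

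The missing idea is to reduce $G_1$ to a pendant edge in this case, rather than removing it.

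\begin{itemize}
\item Suppose $\ker(C_1)\cap\ker(\bb_1\trans)=\{\bzero\}$ and $C_1$ is singular. Since $\bb_1\neq\bzero$, $\ker(\bb_1\trans)$ is a hyperplane, so $\nul(C_1)=1$ and $\bb_1\notin\Col(C_1)$. Hence $v_1$ is an upper index of $A[V(G_1)]$, and $A[V(G_1)]$ is invertible.
\item Pick $u_1$ in the support of the kernel vector of $C_1$, so that $C_1(u_1)$ is invertible. Set $\alpha=V(G_1)\setminus\{u_1,v_1\}$.
\item The $\{u_1,v_1\}$-block of $A/A[\alpha]$ is $A[V(G_1)]/A[\alpha]$. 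By \cref{prop:schur}, it has $v_1$-nullity pair $(\nul A[V(G_1)],\nul C_1)=(0,1)$. So its $(u_1,u_1)$-entry is $0$ and its $(u_1,v_1)$-entry is nonzero.
\item Thus $A/A[\alpha]\in\mptn(K_2\oplus_{v_1}G_0\oplus_{v_2}G_2)$ with $u_1$ the pendant vertex. Since $N(\alpha)\subseteq\{u_1,v_1\}$ is a clique there, \cref{prop:schur} and \cref{lem:schursnip} transfer the $i$-nullity pair and $i$-SNIP.
\end{itemize}

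You only need one side with trivial intersection: your rank-two $X$ shows some side $j$ has $\ker(C_j)\cap\ker(\bb_j\trans)=\{\bzero\}$. Then either $C_j$ is invertible (your Case~1) or the reduction above applies on side $j$, giving the bound by the corresponding graph with $G_j$ replaced by $K_2$.
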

\begin{proof}
Name the two graphs in the maximum as $H_1 = K_2\oplus_{v_1}G_0\oplus_{v_2}G_2$ and $H_2 = G_1\oplus_{v_1}G_0\oplus_{v_2}K_2$.  Since $G_1$ is connected and contains vertices other than $v_1$, by contracting these vertices, $H_1$ is a rooted minor of $G$.  Similarly, $H_2$ is a rooted minor of $G$.  This implies
\[
    \excise(G,i) \geq \max\{
    \excise(H_1,i), \excise(H_2,i)
    \},
\]
so it suffices to show that at least one of $\excise(G,i) \leq \excise(H_1,i)$ or $\excise(G,i) \leq \excise(H_2,i)$ holds.

Let $A\in\mptn(G)$ be a matrix with $i$-SNIP that achieves the $i$-nullity pair whose sum is $\excise(G,i)$.  We may partition $V(G)$ by $V(G_1 - v_1)$, $V(G_0)$, and $V(G_2 - v_2)$, and then write $A$ as 
\[
    A = 
    \begin{bmatrix}
        C_1 & B_1 & O \\
        B_1\trans & A_0 & B_2\trans \\
        O & B_2 & C_2 
    \end{bmatrix},
\]
where column $v_i$ is the only nonzero column in $B_i$ for $i \in \{1,2\}$.  Now we consider two cases based on the invertibility of $C_1$ and $C_2$.  

{\bf Case 1: One of $C_1$ and $C_2$ is invertible.} Suppose $C_1$ is invertible.  Let $\alpha = V(G_1 - v_1)$.  Then $A[\alpha] = C_1$, $A/C_1\in\mptn(G_0\oplus_{v_2}G_2)$, and $N(\alpha) = \{v_1\}$ is a clique in $G_0\oplus_{v_2}G_2$. 
 By \cref{prop:schur} and \cref{lem:schursnip}, $A/C_1$ has $i$-SNIP and achieves the same $i$-nullity pair as $A$.  Thus, $\excise(G,i) \leq \excise(G_0\oplus_{v_2}G_2, i) \leq \excise(H_1,i)$.  Similarly, if $C_2$ is invertible, then $\excise(G,i) \leq \excise(G_1\oplus_{v_1}G_0, i) \leq \excise(H_2,i)$.

{\bf Case 2: Both $C_1$ and $C_2$ are singular.} Suppose there are nonzero vectors $\bx_1\in\ker(C_1)\cap\ker(B_1\trans)$ and $\bx_2\in\ker(C_2)\cap\ker(B_2\trans)$.  Then the matrix 
\[
    X = 
    \begin{bmatrix}
        O & O & \bx_1\bx_2\trans \\
        O & O & O \\
        \bx_2\bx_1\trans & O & O
    \end{bmatrix}
\]
is a nonzero real symmetric matrix satisfying $A\circ X = I\circ X = O$ and $AX = O$.  Therefore, $A$ does not have SAP or $i$-SNIP, which violates our assumption.  Consequently, either $\ker(C_1)\cap\ker(B_1\trans) = \{\bzero\}$ or $\ker(C_2)\cap\ker(B_2\trans) = \{\bzero\}$.  Suppose $\ker(C_1)\cap\ker(B_1\trans) = \{\bzero\}$.  Since $B_1$ has a unique nonzero column and $\nul(B_1\trans) = |V(G_1 - v_1)| - 1$, we have $\nul(C_1) = 1$.
Let $\ker(C_1) = \vspan(\{\bx_1\})$ and let $\by_1$ be column $v_i$ of $B_1$.  Then $\ker(B_1\trans) = \vspan(\{\by_1\})^\perp$, and $\ker(C_1)\cap\ker(B_1\trans) = \{\bzero\}$ implies $\by_1\trans\bx_1 \neq 0$ and $\by_1\notin\vspan(\{\bx_1\})^\perp = \Col(C_1)$.  Since $\nul(C_1) = 1$ and $\by_1\notin\Col(C_1)$, we have $\nul(A[G_1]) = 0$.
Also, since $C_1$ is symmetric and $\nul(C_1) = 1$, there is a vertex $u_1\in V(G_1 - v_1)$ such that $C_1(u_1)$ is invertible.
Equivalently, $A[\alpha] = C_1[\alpha]$ is invertible, where $\alpha = V(G_1 - v_1 - u_1)$.  Thus, $A[G_1] / A[\alpha]$ is a $2\times 2$ matrix with the $v_1$-nullity pair $(\nul(A[G_1]), \nul(C_1)) = (0,1)$, which means its $(u_1,u_1)$-entry is zero and its $(u_1,v_1)$-entry is nonzero.
By direct computation, the $(u_1,u_1)$-entry and the $(u_1,v_1)$-entry of $A / A[\alpha]$ are the same as those in $A_1[G_1] / A[\alpha]$.  Consequently, $A/A[\alpha]\in\mptn(H_1)$ and $N(\alpha) \subseteq \{u_1,v_1\}$ is a clique in $H_1$.
By \cref{prop:schur} and \cref{lem:schursnip}, $A/A[\alpha]$ has $i$-SNIP and achieves the same $i$-nullity pair as $A$.  Thus, $\excise(G,i) \leq \excise(H_1,i)$.  Similarly, if $\ker(C_2)\cap\ker(B_2\trans) = \{\bzero\}$, then we have $\excise(G,i) \leq \excise(H_2,i)$.

Combining the two possible cases, the desired equality holds.
\end{proof}

\section{Minimal minors for \texorpdfstring{$\excise(G, i)\geq 4,5$}{xixi(G, i) >= 4,5}}
\label{sec:excise45}
In this section we  characterize the rooted graphs $(G,i)$ with $\excise(G,i)\geq 4$ and those with $\excise(G,i) \geq 5$.  The $T_3$-family, as shown in Figure~\ref{fig:t3}, was introduced in \cite{HvdH07} as the complete list of minimal minors for $\xi(G) \geq 3$.  That is, $\xi(G) \geq 3$ if and only if $G$ contains a minor in the $T_3$-family.  In Figure~\ref{fig:t3}, every vertex that is not a cut-vertex is filled.
We will show that the set of choices of a rooted graph $(G,i)$ where $G$ is in the $T_3$-family and $i$ is not a cut-vertex forms the complete list of minimal minors for $\excise(G, i) \geq 4$.
Equivalently, a rooted graph $(G,i)$ has $\excise(G,i) \geq 4$ if and only if $(G,i)$ contains a rooted minor in the $T_3$-family rooted at a non-cut-vertex.
By \cref{thm:2k2kp1}, this also leads to the complete list of minimal minors for $\excise(G, i) \geq 5$.  

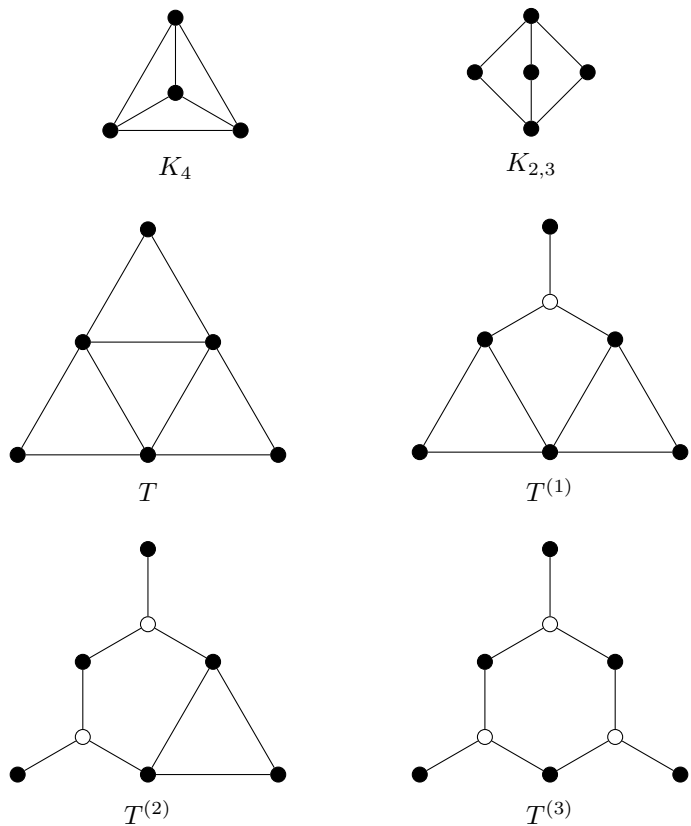
\begin{figure}[t]
\centering
\begin{tikzpicture}[every node/.append style={fill}]
\node (0) at (0,0) {};
\foreach \i in {1,2,3} {
    \pgfmathsetmacro{\ang}{90 + 120*(\i - 1)}
    \node (\i) at (\ang:1) {};
    \draw (0) -- (\i);
}
\draw (1) -- (2) -- (3) -- (1);
\node[draw=none,fill=none,rectangle] at ([yshift=-0.5cm]0,-0.5) {$K_4$};
\end{tikzpicture}
\hfil
\begin{tikzpicture}[every node/.append style={fill}]
\pgfmathsetmacro{\h}{0.5 * (1 + sin(30))}
\node (0) at (0,-\h) {};
\node (1) at (0,\h) {};
\node (2) at (-\h,0) {};
\node (3) at (0,0) {};
\node (4) at (\h,0) {};
\foreach \x in {0,1} {
    \foreach \y in {2,3,4} {
        \draw (\x) -- (\y);
    }
}
\node[draw=none,fill=none,rectangle] at ([yshift=-0.5cm]0,-\h) {$K_{2,3}$};
\end{tikzpicture}

\bigskip

\begin{tikzpicture}[every node/.append style={fill}]
\foreach \i in {1,2,3} {
    \pgfmathsetmacro{\ang}{90 + 120*(\i - 1)}
    \node (\i) at (\ang:2) {};
}
\foreach \i in {4,5,6} {
    \pgfmathsetmacro{\ang}{30 + 120*(\i - 4)}
    \node (\i) at (\ang:1) {};
}
\draw (1) -- (5) -- (2) -- (6) -- (3) -- (4) -- (1);
\draw (4) -- (5) -- (6) -- (4);
\node[draw=none,fill=none,rectangle] at ([yshift=-0.5cm]0,-1) {$T$};
\end{tikzpicture}
\hfil
\begin{tikzpicture}[every node/.append style={fill}]
\foreach \i in {1,2,3} {
    \pgfmathsetmacro{\ang}{90 + 120*(\i - 1)}
    \node (\i) at (\ang:2) {};
}
\foreach \i in {4,5,6} {
    \pgfmathsetmacro{\ang}{30 + 120*(\i - 4)}
    \node (\i) at (\ang:1) {};
}
\node[fill=none] (7) at (90:1) {};
\draw (7) -- (1);
\draw (7) -- (5) -- (2) -- (6) -- (3) -- (4) -- (7);
\draw (5) -- (6) -- (4);
\node[draw=none,fill=none,rectangle] at ([yshift=-0.5cm]0,-1) {$T^{(1)}$};
\end{tikzpicture}

\bigskip

\begin{tikzpicture}[every node/.append style={fill}]
\foreach \i in {1,2,3} {
    \pgfmathsetmacro{\ang}{90 + 120*(\i - 1)}
    \node (\i) at (\ang:2) {};
}
\foreach \i in {4,5,6} {
    \pgfmathsetmacro{\ang}{30 + 120*(\i - 4)}
    \node (\i) at (\ang:1) {};
}
\node[fill=none] (7) at (90:1) {};
\node[fill=none] (8) at (210:1) {};
\draw (7) -- (1);
\draw (8) -- (2);
\draw (7) -- (5) -- (8) -- (6) -- (3) -- (4) -- (7);
\draw (6) -- (4);
\node[draw=none,fill=none,rectangle] at ([yshift=-0.5cm]0,-1) {$T^{(2)}$};
\end{tikzpicture}
\hfil
\begin{tikzpicture}[every node/.append style={fill}]
\foreach \i in {1,2,3} {
    \pgfmathsetmacro{\ang}{90 + 120*(\i - 1)}
    \node (\i) at (\ang:2) {};
}
\foreach \i in {4,5,6} {
    \pgfmathsetmacro{\ang}{30 + 120*(\i - 4)}
    \node (\i) at (\ang:1) {};
}
\node[fill=none] (7) at (90:1) {};
\node[fill=none] (8) at (210:1) {};
\node[fill=none] (9) at (330:1) {};
\draw (7) -- (1);
\draw (8) -- (2);
\draw (9) -- (3);
\draw (7) -- (5) -- (8) -- (6) -- (9) -- (4) -- (7);
\node[draw=none,fill=none,rectangle] at ([yshift=-0.5cm]0,-1) {$T^{(3)}$};
\end{tikzpicture}
\caption{The $T_3$-family with all possible roots marked as filled vertices.}
\label{fig:t3}
\end{figure}

\begin{definition}
Let $\ttf$ be the family of six graphs shown in \cref{fig:t3}, known as the $T_3$-family.  Let $\ttfr$  be the family of rooted graphs $(H,j)$ such that $H\in\ttf$ and $j$ is a cut-vertex of $H$.
\end{definition}

\begin{proposition}
\label{prop:t3}
Let $(G,i)\in\ttfr$.  Then $\excise(G,i) \geq 4$.
\end{proposition}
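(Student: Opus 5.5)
The plan is to produce, for each $(G,i)\in\ttfr$, a matrix in $\mptn(G)$ with the $i$-nullity pair $(3,2)$ and $i$-SNIP. Here I read $\ttfr$ as intended in \cref{fig:t3}, with the root $i$ a non-cut-vertex, i.e.\ one of the filled vertices. Given such a matrix, \cref{prop:kk1} turns $(3,2)$ with $i$-SNIP into $(2,2)$ with $i$-SNIP, so $\excise(G,i)\ge 4$.

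\textbf{Reduction to SAP.} By \cref{thm:snipchar}(a), if $i$ is a downer index of $A$, then $A$ has $i$-SNIP exactly when $A$ has SAP. So it suffices to find $A\in\mptn(G)$ with three properties:
\begin{itemize}
\item $\nul(A)=3$;
\item $A$ has SAP;
\item some vector in $\ker(A)$ is nonzero at $i$.
\end{itemize}
The third condition makes column $i$ a combination of the other columns, so $\nul(A(i))=2$ and $i$ is a downer index. Since every $G\in\ttf$ has $\xi(G)\ge 3$ \cite{BFH05,HvdH07}, SAP matrices of nullity $3$ exist. The real work is to choose one whose kernel is not supported away from $i$. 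Automorphisms of $G$ help: it suffices to handle one representative root in each automorphism orbit of filled vertices. This leaves one orbit for $K_4$, two for $K_{2,3}$, and a small number for $T$ and $T^{(1)},T^{(2)},T^{(3)}$.

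\textbf{Explicit matrices.} For $K_4$, take $A=J$, the all-ones matrix. Then $\ker(J)=\mathbf{1}^\perp$ has nullity $3$ and contains vectors nonzero at every vertex. SAP is trivial, because $A\circ X=I\circ X=O$ already forces $X=O$ when the graph is complete.

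For $K_{2,3}$ with parts $\{u_1,u_2\}$ and $\{w_1,w_2,w_3\}$, I would take the $0/1$ adjacency-type matrix with diagonal $0$, so that rows $u_1,u_2$ coincide and rows $w_1,w_2,w_3$ coincide. This gives rank $2$ and nullity $3$. The kernel contains $\be_{u_1}-\be_{u_2}$ and $\be_{w_1}-\be_{w_2}$, which are nonzero at each root type. For SAP, $X$ is supported on the non-edges $u_1u_2$ and $w_aw_b$. The condition $AX=O$ then reduces to a tiny linear system that forces $X=O$; I will check this directly.

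For $T$, I would use the standard SAP matrix of nullity $3$ from \cite{HvdH07}, or build one from the triangle-of-triangles structure. I would then verify the kernel condition at a degree-$2$ outer vertex and at an inner vertex.

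\textbf{The subdivided graphs.} For $T^{(k)}$ I intend to obtain the matrix from the one for $T$ by replacing an edge $xy$ by a path $x,z,y$. The diagonal entry at $z$ is chosen so that the Schur complement at $z$ recovers a matrix in $\mptn(T)$. \cref{prop:schur} then preserves the nullity. A kernel vector of the new matrix is nonzero at $i$ whenever its image in $T$ is, provided $i\ne z$; the root is never a hollow (cut-)vertex anyway.

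\textbf{Main obstacle.} The hard step is SAP for the subdivided matrices. \cref{lem:schursnip} only transfers SAP from the larger matrix down to the Schur complement, not upward. So for $T^{(1)},T^{(2)},T^{(3)}$ I expect to verify SAP directly. The approach: write a symmetric $X$ supported on the non-edges, impose $AX=O$ row by row starting at the degree-$2$ subdivision vertices (where the equations are shortest), and propagate zeros.
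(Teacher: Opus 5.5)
\textbf{Verdict: genuine gap, repairable.} Your reduction is sound and is the paper's. You exhibit $A\in\mptn(G)$ with $i$-nullity pair $(3,2)$, so $i$ is a downer and $i$-SNIP reduces to SAP by \cref{thm:snipchar}(a). Then \cref{prop:kk1} passes you to $(2,2)$. Your matrices for $K_4$ and $K_{2,3}$ are the paper's, and your SAP check for $K_{2,3}$ goes through.

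The gap is in $T^{(1)},T^{(2)},T^{(3)}$: these are not subdivisions of $T$. In \cref{fig:t3}, $T^{(k)}$ arises from $T$ by $k$ $\Delta$-$Y$ exchanges. An outer triangle $\{x,y,w\}$, with $w$ its degree-$2$ apex, loses its three edges, and a new vertex $z$ adjacent to $x,y,w$ is added. So $w$ becomes a pendant leaf and $z$ is the hollow cut-vertex. Counting confirms this: subdividing an edge of $T$ gives $10$ edges on $7$ vertices, whereas $T^{(1)}$ has $9$. Also, a subdivision of the $2$-connected graph $T$ has no leaves and no cut-vertices, so identifying your $z$ with a hollow vertex is inconsistent. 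Your construction therefore produces matrices in $\mptn$ of graphs that properly contain $T$ as a minor, not in $\mptn(T^{(k)})$. As written, the proposal does not handle these three graphs. For $T$ itself you also never fix a matrix; the paper uses the sum of all-ones blocks on the three outer triangles.

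\textbf{Repair via the correct Schur complement.} Take the Schur complement at the degree-$3$ vertex $z$ instead. Since $N(z)=\{x,y,w\}$ becomes a clique, $A'/A'[z]\in\mptn(T^{(k-1)})$, with $T^{(0)}=T$. To lift a given $B$:
\begin{itemize}
\item set $c=a'_{zz}$ and $a'_{zv}=b_v$ for $v\in\{x,y,w\}$, with $-b_ub_v/c=B_{uv}$ for the three pairs;
\item this is solvable precisely when $cB_{xy}B_{xw}B_{yw}<0$;
\item put $B_{vv}+b_v^2/c$ on the diagonal at $x,y,w$.
\end{itemize}
By \cref{prop:schur} the $i$-nullity pair is unchanged for every $i\neq z$.

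\textbf{Your ``main obstacle'' then disappears.} Let $\bu_v\trans$ be the rows of a kernel basis of $B$. Then the kernel of $A'$ has the extra row $\bu_z\trans$ with $\bu_z=-\frac{1}{c}(b_x\bu_x+b_y\bu_y+b_w\bu_w)$. Writing $S_{uv}=\bu_u\bu_v\trans+\bu_v\bu_u\trans$, one gets $-cS_{zx}=2b_x\bu_x\bu_x\trans+b_yS_{xy}+b_wS_{xw}$ and its two analogues. The resulting $3\times3$ system in $S_{xy},S_{xw},S_{yw}$ has determinant $-2b_xb_yb_w\neq0$. So by \cref{thm:hein}, SAP lifts from $B$ to $A'$.

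\textbf{A more direct repair.} One can check that $G-i$ has zero forcing number, hence maximum nullity, at most $2$ for every $(G,i)\in\ttfr$. Then every nullity-$3$ matrix in $\mptn(G)$ has $i$ as a downer, and a SAP matrix of nullity $\xi(G)=3$ already does the job.

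The paper takes neither route: it writes down star-clique sums for all six graphs and verifies the pair $(3,2)$ and $i$-SNIP by direct computation.
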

\begin{proof}
It is enough to find matrices $A\in\mptn(G)$ with $i$-SNIP and the $i$-nullity pair $(2,2)$, or $(3,2)$ by \cref{prop:kk1}, for each $(G,i)$.  For $K_4$ and $K_{2,3}$, we may choose the all-ones matrix in $\mptn(K_4)$ and the adjacency matrix of $K_{2,3}$, respectively.  For each of $T$, $T^{(1)}$, $T^{(2)}$, and $T^{(3)}$, the edge set can be partitioned by some $K_3$ containing a vertex of degree $2$ and some $K_{1,3}$ centered at a cut-vertex; then we choose the matrix as the sum of the all-ones matrix in $\mptn(K_3)$ and the adjacency matrices of the $K_{1,3}$, padding with appropriate zeros.  Such a construction is referred to as the \emph{star-clique sum} introduced in \cite{SK12}.  To be more precise, we choose the following matrices such that $A_0\in\mptn(K_4)$, $A_1\in\mptn(K_{2,3})$, $B_0\in\mptn(T)$, and $B_k\in\mptn(T^{(k)})$ for $k = 1,2,3$.  Moreover, for $B_k$ with $k = 1,2,3$, the cut-vertices are indexed by $7$, $8$, and $9$, if they exist.  

\[
    A_0 = \begin{bmatrix}
        1 & 1 & 1 & 1 \\
        1 & 1 & 1 & 1 \\
        1 & 1 & 1 & 1 \\
        1 & 1 & 1 & 1 
    \end{bmatrix}, \qquad
    A_1 = \begin{bmatrix}
        0 & 0 & 1 & 1 & 1 \\
        0 & 0 & 1 & 1 & 1 \\
        1 & 1 & 0 & 0 & 0 \\
        1 & 1 & 0 & 0 & 0 \\
        1 & 1 & 0 & 0 & 0
    \end{bmatrix}, 
\]
\[
    B_0 = \begin{bmatrix}
        2 & 1 & 1 & 0 & 1 & 1 \\
        1 & 1 & 1 & 0 & 0 & 0 \\
        1 & 1 & 2 & 1 & 1 & 0 \\
        0 & 0 & 1 & 1 & 1 & 0 \\
        1 & 0 & 1 & 1 & 2 & 1 \\
        1 & 0 & 0 & 0 & 1 & 1 \\
    \end{bmatrix}, \qquad
    B_1 = \begin{bmatrix}
        1 & 0 & 0 & 0 & 1 & 1 & 1 \\
        0 & 0 & 0 & 0 & 0 & 0 & 1 \\
        0 & 0 & 1 & 1 & 1 & 0 & 1 \\
        0 & 0 & 1 & 1 & 1 & 0 & 0 \\
        1 & 0 & 1 & 1 & 2 & 1 & 0 \\
        1 & 0 & 0 & 0 & 1 & 1 & 0 \\
        1 & 1 & 1 & 0 & 0 & 0 & 0
    \end{bmatrix},
\]
\[
    B_2 = \begin{bmatrix}
        1 & 0 & 0 & 0 & 1 & 1 & 1 & 0 \\
        0 & 0 & 0 & 0 & 0 & 0 & 1 & 0 \\
        0 & 0 & 0 & 0 & 0 & 0 & 1 & 1 \\
        0 & 0 & 0 & 0 & 0 & 0 & 0 & 1 \\
        1 & 0 & 0 & 0 & 1 & 1 & 0 & 1 \\
        1 & 0 & 0 & 0 & 1 & 1 & 0 & 0 \\
        1 & 1 & 1 & 0 & 0 & 0 & 0 & 0 \\
        0 & 0 & 1 & 1 & 1 & 0 & 0 & 0
    \end{bmatrix}, \qquad
    B_3 = \begin{bmatrix}
        0 & 0 & 0 & 0 & 0 & 0 & 1 & 0 & 1 \\
        0 & 0 & 0 & 0 & 0 & 0 & 1 & 0 & 0 \\
        0 & 0 & 0 & 0 & 0 & 0 & 1 & 1 & 0 \\
        0 & 0 & 0 & 0 & 0 & 0 & 0 & 1 & 0 \\
        0 & 0 & 0 & 0 & 0 & 0 & 0 & 1 & 1 \\
        0 & 0 & 0 & 0 & 0 & 0 & 0 & 0 & 1 \\
        1 & 1 & 1 & 0 & 0 & 0 & 0 & 0 & 0 \\
        0 & 0 & 1 & 1 & 1 & 0 & 0 & 0 & 0 \\
        1 & 0 & 0 & 0 & 1 & 1 & 0 & 0 & 0
    \end{bmatrix}.
\]

By direct computation, one may check that each of these matrices achieves the $i$-nullity pair $(3,2)$ and has $i$-SNIP for $i\neq 7,8,9$.  Therefore, when $(G,i)\in\ttfr$, it allows the nullity pair $(2,2)$ with $i$-SNIP, implying $\excise(G,i) \geq 4$.   
\end{proof}

One may verify the following observation directly case by case.

\begin{observation}
\label{obs:addedge}
Let $G\in\{T_3^{(1)}, T_3^{(2)}, T_3^{(3)}\}$ as in \cref{fig:t3} with $i$ a cut-vertex of $G$.  Let $i'$ be the leaf neighbor of $i$ in $G$.  Then by adding an edge $e$ from $i'$ to any vertex other than $i$, the resulting graph $(G+e,i)$ contains a rooted minor in $\ttfr$.  For example, let $G = T_3^{(3)}$, $i$ a cut-vertex, and $i'$ its leaf neighbor.  By adding an edge $e$ between $i'$ and another leaf, $(G+e,i)$ contains $(H,j)\in\ttfr$ as a rooted minor, where $H = K_{2,3}$ and $j$ is a vertex of degree $3$.
\end{observation}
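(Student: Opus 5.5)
The plan is a finite case check that uses the symmetries of the three graphs. Throughout, the targets are the members of the $T_3$-family rooted at a vertex that is \emph{not} a cut-vertex, i.e.\ the filled vertices in \cref{fig:t3}. This is the reading used in \cref{prop:t3}; the condition ``$j$ is a cut-vertex'' in the definition of $\ttfr$ appears to be a typo for ``$j$ is not a cut-vertex''.

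\emph{Setup and symmetry.} I would label the graphs as in the matrices $B_1,B_2,B_3$. For $T^{(3)}$, the hexagon is $7\!-\!5\!-\!8\!-\!6\!-\!9\!-\!4\!-\!7$, and the leaves $1,2,3$ hang at $7,8,9$. By the symmetry of $T^{(3)}$, I may take the root $i=7$ and $i'=1$. The reflection fixing $7$ and $6$ swaps $4\leftrightarrow5$, $8\leftrightarrow9$, $2\leftrightarrow3$. So the endpoint $x$ of $e=\{1,x\}$ only needs to be checked for
\begin{itemize}
\item $x=5$, a hexagon neighbour of $7$;
\item $x=8$, a cut-vertex at distance $2$;
\item $x=6$, the antipode;
\item $x=2$, another leaf.
\end{itemize}
The analogous reductions for $T^{(2)}$ and $T^{(1)}$ have even fewer orbits. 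In each case I exhibit a sequence of edge deletions and contractions that never deletes $i$. Each contraction should shrink a cycle or absorb a leaf, so that we land on a smaller member of the family in which $i$ lies on a cycle and is therefore no longer a cut-vertex.

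\emph{Sample cases.} For $x=5$, contract the edge $\{4,9\}$. The result has the $5$-cycle $7\!-\!5\!-\!8\!-\!6\!-\!9\!-\!7$, the vertex $1$ adjacent to both $7$ and $5$, and leaves $2,3$ at $8,9$. This is exactly $T^{(2)}$: its $5$-cycle is $7\!-\!5\!-\!8\!-\!6\!-\!4$, the outer vertex $3$ is adjacent to both ends of the edge $\{6,4\}$, and leaves sit at $7$ and $8$. Under this identification our root $7$ corresponds to $6$ or $4$, which are filled vertices, so $(G+e,7)$ contains a rooted minor in $\ttfr$. For $x=2$, the given example applies: contract the paths $7\!-\!4\!-\!9$ and $5\!-\!8$ and the edge $\{6,9\}$, and delete the leaf $3$. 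This should give $K_{2,3}$ with root $7$ of degree $3$. For $x=8$ and $x=6$, the new edge creates a short cycle through $7,1,x$. I would contract the hexagon arc that this cycle bypasses, which again lowers the superscript by one, and if needed delete a leaf. The aim is to land on $T^{(2)}$ or $T^{(1)}$ with $7$ on the triangle, or on $K_4$ or $K_{2,3}$.

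\emph{Main obstacle.} The main difficulty is choosing the right contraction in the cases $x=6$ and $x=8$ for $T^{(3)}$, and their counterparts in $T^{(2)}$ and $T^{(1)}$. There, the naive contraction of $\{1,x\}$ tends to leave $7$ as a degree-$2$ vertex, which matches no filled vertex. My first attempt would be to contract hexagon edges on the side away from $7$ until the new cycle through $1$ becomes the ``outer triangle'' of a smaller $T^{(k)}$. If that fails, I would instead look for a $K_{2,3}$ minor with $7$ and $x$ on the side of size two. The remaining cases for $T^{(2)}$ and $T^{(1)}$ follow the same pattern, and since the graphs are tiny each one can be confirmed by inspection.
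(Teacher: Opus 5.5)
This is a genuine gap. Your framework is the right one, since the paper itself only says the observation can be verified case by case, and several pieces are correct. You are right that $\ttfr$ is meant to be rooted at non-cut-vertices. Your orbit reduction for $T^{(3)}$ is right, using the labels of \cref{fig:t3}, which are the ones you actually use, not those of $B_3$. Your $x=5$ contraction of $\{4,9\}$ does give $T^{(2)}$ rooted at a non-cut-vertex. But the statement \emph{is} this finite check, so deferring most cases to ``inspection'' leaves it unproved, and the cases you do sketch are partly wrong.

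\textbf{The $x=2$ case.} Taken literally, your operations merge $\{7,4,9,6\}$ into the root and $\{5,8\}$ into one vertex. That leaves only a $4$-cycle through these two vertices and $1,2$; contracting $\{5,8\}$ by itself already destroys a branch you need. The correct reduction is: delete $3$, contract $\{1,2\}$, and contract the path $4\!-\!9\!-\!6$. Then the paths $7\!-\!5\!-\!8$, $7\!-\!1\!-\!2\!-\!8$ and $7\!-\!4\!-\!9\!-\!6\!-\!8$ give $K_{2,3}$ with $\{7,8\}$ as the side of size two.

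\textbf{The cases $x\in\{6,8\}$.} Your primary plan of dropping to $T^{(2)}$ cannot work.
\begin{enumerate}
\item Here $G+e$ is a theta graph plus two pendant leaves, so it has no $K_4$ minor.
\item A minor of a connected graph loses at least as many edges as vertices. This rules out $T^{(1)}$ and $T$, and forces $T^{(2)}$ to arise from a single leaf deletion or a single contraction.
\item The block of $T^{(2)}$ is a theta with branch lengths $(1,2,4)$ and no leaf at a branch vertex. For $x=6$ you have lengths $(2,3,3)$, which one contraction cannot turn into $(1,2,4)$. For $x=8$ you have $(2,2,4)$, and every contraction reaching $(1,2,4)$ leaves the leaf $2$ at a branch vertex.
\end{enumerate}
Only your fallback works. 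For $x=8$, the paths $7\!-\!5\!-\!8$, $7\!-\!1\!-\!8$, $7\!-\!4\!-\!9\!-\!6\!-\!8$ give $K_{2,3}$. For $x=6$, the paths $7\!-\!1\!-\!6$, $7\!-\!5\!-\!8\!-\!6$, $7\!-\!4\!-\!9\!-\!6$ do.

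\textbf{The graphs $T^{(2)}$ and $T^{(1)}$.} Neither is checked at all. Moreover, $T^{(2)}$ has \emph{more} cases than $T^{(3)}$, not fewer. Its only nontrivial automorphism swaps its two cut-vertices, so with $i=7$ and $i'=1$ all six choices $x\in\{2,3,4,5,6,8\}$ are separate.

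\textbf{How to close every case.} Use the following dichotomy.
\begin{enumerate}
\item If $x$ is a cycle-neighbour of $i$, let $y$ be the other cycle-neighbour and contract $\{i,y\}$. The triangle $i\,i'\,x$ becomes a new ear, and you get $T^{(2)}$, $T^{(1)}$, $T$ from $T^{(3)}$, $T^{(2)}$, $T^{(1)}$ respectively. In these graphs no triangle contains a cut-vertex, so the root is at a non-cut-vertex. Your $x=5$ case is an instance of this.
\item Otherwise, $i$ and some vertex $z$ are joined by three internally disjoint paths of length at least $2$. This gives $K_{2,3}$ with $i$ of degree $3$.
\begin{itemize}
\item In $T^{(3)}$, take $z=8,6,8$ for $x=8,6,2$.
\item In $T^{(2)}$, take $z=8$ for $x\in\{2,8\}$ and $z=6$ for $x=6$. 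For $x=3$ take $z=3$, with paths $7\!-\!1\!-\!3$, $7\!-\!4\!-\!3$, $7\!-\!5\!-\!8\!-\!6\!-\!3$.
\item In $T^{(1)}$, take $z=6$ for $x\in\{2,3,6\}$.
\end{itemize}
\end{enumerate}
Listing these explicit minors is what the proof actually requires.
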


\begin{figure}[t]
\centering
\begin{tikzpicture}
\begin{scope}[preimage/.style={rectangle, minimum height=0.5cm, minimum width=1cm, fill=white}]
\node[preimage, minimum height=0.75cm, minimum width=3cm, label={right:$W_c$}] (Wc) at (0,0) {};
\path (Wc.south) --node[midway,above,rectangle,draw=none] (beta) {$\beta$} ++ (1.5,0);
\draw ([xshift=1.5cm]Wc.south) [rounded corners=0.5cm]-- ++(0,0.5) -- ++(-1.5,0) [sharp corners]-- (Wc.south);
\path (Wc.south) --node[midway,above,rectangle,draw=none] (alpha) {$\alpha$} ++ (-1.5,0);
\draw ([xshift=-1.5cm]Wc.south) [rounded corners=0.5cm]-- ++(0,0.5) -- ++(1.5,0) [sharp corners]-- (Wc.south);
\node[fill=white, label={below:$i$}] at (Wc.south) {};

\node[preimage, label={left:$W_a$}] (Wa) at ([yshift=-1cm]Wc.south west) {};
\node[preimage, label={right:$W_b$}] (Wb) at ([yshift=-1cm]Wc.south east) {};
\node[preimage, label={right:$W_d$}] (Wd) at ([yshift=1cm]Wc.north) {};

\begin{scope}[on background layer, preimage edge/.style={very thick, shorten <=-3pt, shorten >=-3pt}]
\draw[preimage edge] (Wd.south) -- (Wc.north);
\draw[preimage edge] (alpha.south) -- (Wa.north);
\draw[preimage edge] (beta.south) -- (Wb.north);
\draw[very thick, dashed] (Wa.south) -- ++(0,-0.5);
\draw[very thick, dashed] (Wb.south) -- ++(0,-0.5);
\end{scope}

\node[draw=none, rectangle] at (0,-3) {$G$};
\end{scope}

\begin{scope}[xshift=5cm]
\node[label={right:$c$}] (c) at (0,0) {};
\node[label={left:$a$}] (a) at (210:1) {};
\node[label={right:$b$}] (b) at (-30:1) {};
\node[label={right:$d$}] (d) at (90:1) {};
\draw (c) -- (a);
\draw (c) -- (b);
\draw (c) -- (d);
\draw[dashed] (a.south) -- ++(0,-0.5);
\draw[dashed] (b.south) -- ++(0,-0.5);

\node[draw=none, rectangle] at (0,-3) {$H$};
\end{scope}
\end{tikzpicture}
\caption{An illustration of the proof of \cref{lem:notcut}.}
\label{fig:notcut}
\end{figure}
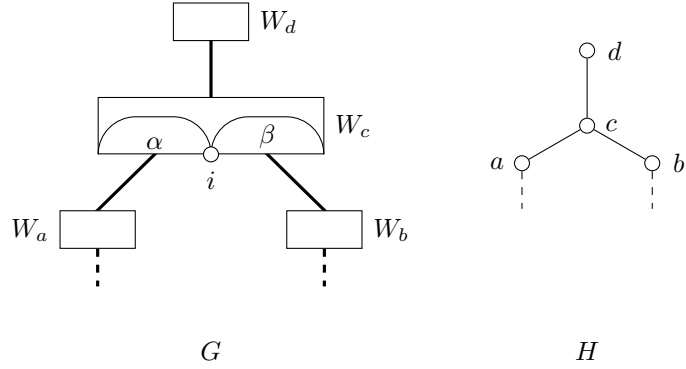

\begin{lemma}
\label{lem:notcut}
Let $G$ be a connected graph and let $i\in V(G)$ be a vertex that is not a cut-vertex of $G$.  Then the following are equivalent.
\begin{enumerate}[label={{\rm(\alph*)}}]
\item $(G,i)$ contains a rooted graph in $\ttfr$ as a rooted minor.
\item $G$ contains a graph $\ttf$ as a minor.
\end{enumerate}
\end{lemma}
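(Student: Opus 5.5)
The implication (a)$\Rightarrow$(b) is immediate: forgetting the root turns a rooted minor into an ordinary minor. So the work is in (b)$\Rightarrow$(a). Throughout I read $\ttfr$ as intended by the surrounding text and \cref{fig:t3}: the root $j$ is a \emph{non}-cut-vertex of $H\in\ttf$, which are the filled vertices. The definition as printed says ``cut-vertex'', which I take to be a typo.

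\textbf{Setting up a model.} Suppose $G$ has a minor $H\in\ttf$. Fix a minor model: pairwise disjoint connected branch sets $W_v\subseteq V(G)$, one for each $v\in V(H)$, with an edge of $G$ between $W_u$ and $W_v$ whenever $uv\in E(H)$.
\begin{itemize}
\item If $i$ lies in no branch set, I use connectivity of $G$: take a shortest path from $i$ to $\bigcup_v W_v$ and add it to the branch set it reaches. This keeps all branch sets connected, so we may assume $i\in W_c$ for some $c\in V(H)$.
\item If $c$ is not a cut-vertex of $H$, contract every branch set. Because $i\in W_c$, the contracted vertex $c$ is the root, and $(H,c)\in\ttfr$, so we are done.
\end{itemize}

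\textbf{Case where $c$ is a cut-vertex of $H$.} This happens only for $H=T^{(k)}$, $k\ge 1$. Then $c$ has a unique leaf neighbor $d$ and two further neighbors $a,b$, as in \cref{fig:notcut}. Since $i$ is not a cut-vertex of $G$, the graph $G-i$ is connected. I plan to exploit this by choosing, among all models with $i\in W_c$, one that minimizes $|W_c|$. Next, split $W_c-i$ into three parts: the part $\alpha$ through which $W_c$ reaches $W_a$, the part $\beta$ through which it reaches $W_b$, and the part $\gamma$ that touches $W_d$. Since $G-i$ is connected, there is a path $P$ in $G-i$ from $W_d$ to some branch set other than $W_c$, or to $\alpha\cup\beta$. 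Two subcases:
\begin{itemize}
\item \emph{$P$ avoids $W_c$.} Then $P$ ends in some $W_v$ with $v\neq c,d$. Adding $P$ minus its last vertex to $W_d$ yields a rooted minor $(H+e,c)$ with $e=dv$. By \cref{obs:addedge}, $(H+e,c)$ contains a rooted minor in $\ttfr$.
\item \emph{$P$ must enter $W_c$.} I re-partition instead. Move the components of $W_c-i$ met by $P$ (together with $P$) into $W_d$ or $W_a$ or $W_b$ as appropriate, keeping $i$ together with whatever is still needed to stay adjacent to $W_a$, $W_b$ and $W_d$. The aim is either a smaller $W_c$, contradicting minimality, or again an extra edge from $d$ to some vertex other than $c$, to which \cref{obs:addedge} applies.
\end{itemize}

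\textbf{Main obstacle.} The delicate step is this last re-partitioning. It must keep every branch set connected and preserve every adjacency of $H$, while still producing the extra edge at $d$. I would handle it by a careful case analysis on whether $P$ first enters $W_c$ in $\alpha$, $\beta$ or $\gamma$, using minimality of $W_c$ to rule out configurations where $i$ is superfluous. In all cases the conclusion is that $(G,i)$ contains a rooted minor in $\ttfr$.
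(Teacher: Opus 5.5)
Your reading of $\ttfr$ (root at a non-cut-vertex) is correct. The reduction to $i\in W_c$ with $c$ the cut-vertex of some $T^{(k)}$ is fine, and so is your first subcase.

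The gap is the second subcase, which is the whole content of the lemma. There you only state an aim, and that aim cannot be met in general. You keep $i$ in the $c$-branch set and hope for either a smaller $W_c$ or an extra edge at $d$. Here is a counterexample to that plan:
\begin{itemize}
\item Take $H=T^{(1)}$ and let $G$ arise from $H$ by replacing the edge $cb$ with a path $c\,i\,y\,b$. Then $i$ is not a cut-vertex of $G$.
\item The model $W_c=\{c,i\}$, $W_b=\{y,b\}$, all other branch sets singletons, has $|W_c|$ minimum, since $\deg_G(i)=2<3$.
\item Here $W_d=\{d\}$ is adjacent only to $W_c$, so every path from $W_d$ in $G-i$ enters $W_c$, and you are in your second subcase.
\item No smaller $W_c\ni i$ exists. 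Moreover, $G$ has no minor $T^{(1)}+e$ at all: $G$ is connected with $9$ vertices and $11$ edges, so each of its $7$-vertex minors has at most $9$ edges, whereas $T^{(1)}+e$ has $10$.
\end{itemize}
The lemma holds for this $G$ only because $i$ can be moved \emph{out} of $W_c$. Taking $W_b'=\{i,y,b\}$ and $W_c'=\{c\}$ exhibits $(T^{(1)},b)\in\ttfr$.

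The missing idea is exactly this re-rooting of $i$ into the branch set of a non-cut-vertex $d$, $a$ or $b$ of $H$; with it, no minimality is needed. Extend the model so that the branch sets cover $V(G)$. Fix a path $P_0$ in $G[W_c]$ from a vertex adjacent to $W_a$ to one adjacent to $W_b$. Then there are three moves.
\begin{itemize}
\item If $P_0$ can be chosen to avoid $i$, take $V(P_0)$ as the $c$-set. Take a path in $G[W_c]$ from $i$ to $P_0$, minus its last vertex, as the $d$-set. This gives $(H,d)$.
\item If some path in $G[W_c\cup W_d]$ from $W_d$ first meets $P_0$ at a vertex $p\neq i$, split $P_0$ at $p$. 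Merge the part of $P_0$ lying strictly on the side of $p$ that contains $i$ into $W_a$ or $W_b$ accordingly. Keep the rest of $P_0$ (which contains $p$) as the $c$-set and that path minus $p$ as the $d$-set. This gives $(H,a)$ or $(H,b)$.
\item Otherwise every such path first meets $P_0$ at $i$. Since $G-i$ is connected, there is a path in $G-i$ from $W_d$ to $\bigcup_{v\neq c,d}W_v$; stop it at its first vertex outside $W_c\cup W_d$. This path must avoid $P_0$. Add it, together with a $W_d$--$P_0$ path without its end $i$, to $W_d$. This yields $(H+dv,c)$ with $i$ in the $c$-set $V(P_0)$, and \cref{obs:addedge} finishes.
\end{itemize}
Your planned case analysis on where $P$ enters $W_c$ would have to reproduce these three moves. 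The second move is precisely what your ``keep $i$ in $W_c$'' re-partitioning rules out.
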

\begin{proof}
If $(G,i)$ contains $(H,j)\in\ttfr$ as a rooted minor, then $G$ contains $H$ as a minor, while $H\in\ttf$ by definition.

Conversely, let $(G,i)$ be a rooted graph that does not contain any rooted graph in $\ttfr$ as a rooted minor.  Suppose, for the purpose of yielding a contradiction, that $G$ contains a graph $H\in\ttf$ as a minor.  Then there is a mapping $\varphi: W \rightarrow V(H)$ such that  
\begin{itemize}
\item $W\subseteq V(G)$, 
\item for each $v\in V(H)$, $\varphi^{-1}(\{v\}) \neq \emptyset$ and the induced subgraph $G[\varphi^{-1}(\{v\})]$ is connected, and 
\item for each edge $\{u,v\}\in E(H)$, there is at least an edge between $\varphi^{-1}(\{u\})$ and $\varphi^{-1}(\{v\})$ in $G$.
\end{itemize}
Since $G$ is connected, we may pick a mapping $\phi$ such that $W = V(G)$.  
For convenience, let $W_v = \varphi^{-1}(\{v\})$.  Note that $W_v$ are mutually disjoint and their union is $W = V(G)$.  Let $c$ be the vertex in $V(H)$ such that $i \in W_c$.  Then, by our assumption, $c$ must be a cut-vertex in $H$ and, necessarily, $H\in\{T^{(1)}, T^{(2)}, T^{(3)}\}$.  Let $a,b$ be the non-leaf neighbors of $c$ in $H$, and $d$ the leaf neighbor of $b$ in $H$.  See \cref{fig:notcut} for an illustration.

Let $\alpha$ be the set of all vertices in $W_c$ that are adjacent to some vertex in $W_a$; similarly, let $\beta$ be the set of all vertices in $W_c$ that are adjacent to some vertex in $W_b$.  Since $W_c$ induces a connected subgraph in $G$, there is a path in $W_c$ between any vertex in $\alpha$ to any vertex in $\beta$.  Let $P$ be such a path.  If $i$ is not on the path, then there is a path in $W_c$ from $i$ to $P$, and this gives a rooted minor $(H,d)$ in $(G,i)$.  Therefore, any path from $\alpha$ to $\beta$ must pass through $i$.

For any path $P$ in $W_c$ from $\alpha$ to $\beta$ and a vertex $w\in W_c\cup W_d\setminus V(P)$, there is a path $P'$ in $W_c\cup W_d$ that connects from $w$ to $P$ since the vertices $c$ and $d$ are adjacent in $H$.  If $P'$ first intersects with $P$ at a vertex $p$ other than $i$, then $(G,i)$ contains $(H,a)$ or $(H,b)$ as a rooted minor.  Therefore, for any such $P$ and $w\in W_c\cup W_d\setminus V(P)$, every path from $w$ to $P$ is through $i$.  

We claim that $i$ has to be a cut-vertex, violating the assumption.  Let $w\in W_d$ and $P'$ a path from $w$ to $\varphi^{-1}(V(H) - c - d)$.  Let $P$ be a path in $W_c$ from $\alpha$ to $\beta$.  If $P'$ does not intersect with $P$, then $(G,i)$ contains a rooted minor $(H',c)$, where $H'$ is obtained from $H$ by adding some edge between $d$ and $V(H) - c - d$, and $(H',c)$ contains a rooted minor in $\ttfr$ by \cref{obs:addedge}.  Therefore, $P'$ meets $P$ before reaching $\varphi^{-1}(V(H) - c - d)$.  Moreover, by the previous arguments, $i$ is the first vertex when $P'$ meets $P$.  Consequently, $i$ is a cut-vertex of $G$ that separates $W_d$ and $\varphi^{-1}(V(H) - c - d)$, which contradicts to our assumption.  

Therefore, our hypothesis that $G$ contains a minor in the $\ttf$ is impossible.  This completes the proof.
\end{proof}

Now we are ready for our main theorems.  

\begin{theorem}
\label{thm:e4}
Let $G$ be a connected graph and $i\in V(G)$ that is not a cut-vertex of $G$.  Then the following are equivalent.
\begin{enumerate}[label={{\rm(\alph*)}}]
\item $\excise(G,i) \geq 4$. \label{item:e41}
\item $(G,i)$ contains a rooted graph in $\ttfr$ as a rooted minor. \label{item:e42}
\item $G$ contains a graph $\ttf$ as a minor. \label{item:e43}
\item $\xi(G) \geq 3$. \label{item:e44} 
\end{enumerate}
\end{theorem}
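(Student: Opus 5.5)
We will prove the cycle of implications (b)$\Rightarrow$(a)$\Rightarrow$(d)$\Rightarrow$(c)$\Rightarrow$(b). One point of notation first: the definition of $\ttfr$ says ``$j$ is a cut-vertex'', but \cref{prop:t3} and the surrounding discussion make clear that $\ttfr$ consists of the members of $\ttf$ rooted at a \emph{non}-cut-vertex. We use that intended meaning throughout.

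\textbf{(b)$\Rightarrow$(a).} Suppose $(G,i)$ contains $(H,j)\in\ttfr$ as a rooted minor. By \cref{prop:t3}, $\excise(H,j)\geq 4$. Minor monotonicity of $\excise$ (from \cite{SNIP}) then gives $\excise(G,i)\geq\excise(H,j)\geq 4$.

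\textbf{(a)$\Rightarrow$(d).} Suppose $\excise(G,i)\geq 4$. By the bifurcation results of \cref{sec:bif}, the allowed pairs with $i$-SNIP descend along the staircase, so $(G,i)$ allows $(2,2)$ with $i$-SNIP. By \cref{prop:kk1} it then also allows $(3,2)$ with $i$-SNIP. Take $A\in\mptn(G)$ with $i$-SNIP, $\nul(A)=3$ and $\nul(A(i))=2$. By \cite[Lemma~3.6]{SNIP}, $A$ has SAP, so $\xi(G)\geq\nul(A)=3$.

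This step is where the pair $(3,2)$ is essential. The pair $(2,2)$ alone only gives $\xi(G)\geq 2$, which is not enough.

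\textbf{(d)$\Rightarrow$(c).} This is the theorem of \cite{HvdH07}: $\xi(G)\geq 3$ if and only if $G$ has a minor in $\ttf$.

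\textbf{(c)$\Rightarrow$(b).} This is exactly \cref{lem:notcut}. Its hypotheses hold because $G$ is connected and $i$ is not a cut-vertex of $G$.

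The combinatorial heart of the argument is \cref{lem:notcut}, which is already established. The only subtle point left is the (a)$\Rightarrow$(d) step. There we must make sure $\excise(G,i)\geq 4$ yields a pair with first coordinate $3$. This uses $k+\ell\geq 4$ with $k\leq\ell$ together with the staircase lemmas: either $(2,2)$ is reached directly, or it is reached from a larger pair via \cref{lem:west} and \cref{lem:south}, and only afterwards do we pass to $(3,2)$ via \cref{prop:kk1}.
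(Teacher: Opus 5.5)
Your proposal is correct and is essentially the paper's proof. It uses the same four ingredients: \cref{lem:notcut} for (c)$\Rightarrow$(b), the result of \cite{HvdH07} for (d)$\Rightarrow$(c), \cref{prop:t3} plus minor monotonicity for (b)$\Rightarrow$(a), and \cref{prop:kk1} with SAP of $i$-SNIP matrices for (a)$\Rightarrow$(d), arranged as one cycle instead of two equivalences plus two implications. Your reading of $\ttfr$ as the non-cut-vertex rootings is the intended one, and your explicit descent to $(2,2)$ via \cref{lem:west,lem:south} fills in the step the paper compresses into ``by definition.''
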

\begin{proof}
By \cref{lem:notcut}, \ref{item:e42} and \ref{item:e43} are equivalent.  Also, it is known that \ref{item:e43} and \ref{item:e44} are equivalent \cite{HvdH07}.

By \cref{prop:t3}, \ref{item:e42} implies \ref{item:e41}.  By definition, $\excise(G,i) \ge 4$ means that $(G,i)$ allows the nullity pair $(2,2)$, and also $(3,2)$ by \cref{prop:kk1}, with $i$-SNIP, so $\xi(G) \geq 3$.  This establishes that \ref{item:e41} implies \ref{item:e44}, completing the proof that the four statements are equivalent.
\end{proof}

\begin{corollary}
\label{cor:e45}
The minimal minors for $\excise(G, i) \geq 4$ are $\ttfr$, while the minimal minors for $\excise(G, i) \geq 5$ are those obtained from $\ttfr$ by extending their roots. 
\end{corollary}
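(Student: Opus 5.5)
The plan is to prove the two halves separately: first that the minimal minors for $\excise(G,i)\geq 4$ are exactly the members of $\ttfr$, and then to obtain the statement for $\excise(G,i)\geq 5$ from \cref{thm:2k2kp1} with $k=2$. Throughout, I read $\ttfr$ as in \cref{fig:t3} and in the proofs of \cref{prop:t3} and \cref{lem:notcut}, namely as the graphs of $\ttf$ rooted at a vertex that is \emph{not} a cut-vertex.

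\emph{Every minimal minor lies in $\ttfr$.} Let $(G,i)$ achieve $\excise(G,i)\geq 4$ as a minimal minor.
\begin{enumerate}[label={(\arabic*)}]
\item $G$ is connected. Otherwise \cref{prop:disjoint} shows that the component containing $i$ is a proper rooted minor with the same value of $\excise$.
\item $i$ is not a cut-vertex. Otherwise \cref{thm:cutvtx} gives $\excise(G_1,i)\geq 4$ or $\excise(G_2,i)\geq 4$ for a proper rooted subgraph $(G_1,i)$ or $(G_2,i)$, contradicting minimality. This is the argument of the corollary following \cref{thm:cutvtx}, run with ``$\geq 4$'' in place of ``$=k$''.
\item \cref{thm:e4} now applies, so $(G,i)$ contains some $(H,j)\in\ttfr$ as a rooted minor.
\item By \cref{prop:t3}, $\excise(H,j)\geq 4$. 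Minimality of $(G,i)$ therefore forces $(G,i)=(H,j)$.
\end{enumerate}

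\emph{Every member of $\ttfr$ is minimal.} Let $(H,j)\in\ttfr$. By \cref{prop:t3}, $\excise(H,j)\geq 4$. Suppose some proper rooted minor $(F,j')$ also had $\excise(F,j')\geq 4$. Then $(F,j')$ contains a minimal minor for $\excise\geq 4$, which by the first half is some $(H',j'')\in\ttfr$. This makes $H'\in\ttf$ a proper minor of $H\in\ttf$. But $\ttf$ is the complete set of minimal minors for $\xi\geq 3$ \cite{HvdH07}, so it is an antichain under the minor order. Concretely, $\xi(H')\geq 3$ would contradict the minimality of $H$. Hence no such $(F,j')$ exists, and $(H,j)$ is a minimal minor.

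\emph{The case $\excise\geq 5$.} Applying \cref{thm:2k2kp1} with $k=2$ immediately gives that the minimal minors for $\excise(G,i)\geq 5$ are exactly the rooted graphs obtained from $\ttfr$ by extending the root.

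The only step requiring care is the antichain argument in the second half. It rests on the cited fact that $\ttf$ is precisely the list of minimal minors for $\xi\geq 3$, together with the fact that a minor of a proper minor is again a proper minor.
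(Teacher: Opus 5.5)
Your proposal is correct and follows the paper's proof: connectivity from \cref{prop:disjoint}, a non-cut-vertex root from \cref{thm:cutvtx}, then \cref{thm:e4} with \cref{prop:t3} and minimality, and finally \cref{thm:2k2kp1} with $k=2$. You also rightly read $\ttfr$ as rooted at non-cut-vertices, despite the typo in its definition.

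Your second half, showing that every member of $\ttfr$ is actually minimal, is a step the paper leaves implicit. Both the ``exactly'' in the $\excise\geq 4$ claim and the transfer to $\excise\geq 5$ depend on it. Your antichain argument for it is sound. It can be shortened: as in the proof of \cref{thm:e4}, $\excise(F,j')\geq 4$ already forces $\xi(F)\geq 3$, which contradicts the minimality of $H$ for $\xi\geq 3$ directly.
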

\begin{proof}
Let $(G,i)$ achieve $\excise(G, i) \geq 4$ as a minimal minor.
By \cref{prop:disjoint,thm:cutvtx}, $(G,i)$ is connected and $i$ is not a cut-vertex of $G$.  By \cref{thm:e4}, $(G,i)$ contains a rooted minor in $\ttfr$, which means $(G,i)\in\ttfr$ by the minimality of $(G,i)$.  With the minimal minors for $\excise(G, i) \geq  4$ established, the minimal minors for $\excise(G, i) \geq 5$ are immediate by \cref{thm:2k2kp1}. 
\end{proof}

\section{Many aspects of \texorpdfstring{$i$}{i}-SNIP}
\label{sec:snipthy}
In this section, we provide a comprehensive investigation of $i$-SNIP and its consequences.  We prove the Bifurcation Lemma introduced in \cref{sec:bif}.  We also introduce several basic tools for dealing with $i$-SNIP, including edge bounds and an equivalent alternative null space definition of it.

Let $(G,i)$ be a rooted graph and $A\in\mptn(G)$.  Let $\mptncl(G)$ be the \emph{topological closure} of $\mptn(G)$, i.e., all symmetric matrices whose $i,j$-entry is zero whenever $\{i,j\}\notin E(G)$ and $i\neq j$.
Note that the entries of a matrix in $\mptncl(G)$ that correspond to an edge are allowed to be zero.  Let $\GL{i}$ be the subgroup of the general linear group defined by 
\[
    \GL{i} := \{Q\in\mat : \det(Q) \neq 0,\ Q[i,i) = \bzero\trans\}.
\]
We consider two perturbations  
\[
    \begin{aligned}
    A &\mapsto A + B, \\
    A &\mapsto Q\trans AQ,
    \end{aligned}
\]
where $B\in\mptncl(G)$ and $Q\in\GL{i}$.  The first of these perturbations preserves the pattern of the matrix for a sufficiently small perturbation $B$, and the second of these preserves the $i$-nullity pair.  Both of them preserve the symmetry of the matrix.

In \cite{SNIP}, we considered the function  
\begin{equation}
\label{eq:supperturb}
    F(B,Q) = Q\trans AQ + B,
\end{equation}
defined for $B\in\mptncl(G)$ near $O$ and $Q\in\GL{i}$ near $I$.
For completeness, we include the conclusions that were obtained in \cite{SNIP} from the analysis of this function $F$.

\begin{proposition}[Proposition~6.4 of \cite{SNIP}]
\label{prop:snipspaces}
Let $G$ be a graph on $n$ vertices, $A\in\mptn(G)$, and $i\in V(G)$. Let $F$ be defined as in \cref{eq:supperturb},  and let $\dot{F}_B$, $\dot{F}_Q$ be the partial derivatives at $B = O$ and $Q = I$.  Then the following hold:
\begin{enumerate}[label={{\rm(\alph*)}}]
\item $\range(\dot{F}_B) = \mptncl(G)$.
\item $\range(\dot{F}_Q) = \{L\trans A + AL: L\in\mat,\ L[i,i) = \bzero\trans\}$.
\item $\range(\dot{F}_B)^\perp = \{X\in\msym: A\circ X = O,\ I\circ X = O\}$.
\item $\range(\dot{F}_Q)^\perp = \{X\in\msym: (AX)[i,i] = 0,\ (AX)(i,:] = O\}$.
\end{enumerate}
\end{proposition}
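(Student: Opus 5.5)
The plan is to compute both partial derivatives of $F$ directly at $(B,Q) = (O,I)$, and then to take orthogonal complements inside $\msym$. Throughout, $\msym$ carries the trace inner product $\langle X, Y\rangle = \operatorname{tr}(XY) = \sum_{j,k} X_{jk}Y_{jk}$.

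For (a), $F$ is affine in $B$, so $\dot{F}_B$ is the identity map on $\mptncl(G)$ and its range is $\mptncl(G)$.

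For (b), I first identify the tangent space of $\GL{i}$ at $I$. The defining condition $Q[i,i) = \bzero\trans$ is linear and $\det \neq 0$ is open, so $\GL{i}$ is an open subset of the linear space $\{Q \in \mat : Q[i,i) = \bzero\trans\}$. Hence the tangent directions at $I$ are exactly the matrices $L$ with $L[i,i) = \bzero\trans$. Expanding
\[
(I+tL)\trans A (I+tL) = A + t(L\trans A + AL) + t^2 L\trans A L
\]
gives $\dot{F}_Q(L) = L\trans A + AL$, which is (b).

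For (c), a symmetric $X$ is orthogonal to $\mptncl(G)$ if and only if $X_{jk} = 0$ for every position that is free in $\mptncl(G)$. These free positions are the diagonal positions and the positions $(j,k)$, $(k,j)$ with $\{j,k\} \in E(G)$. Since $A \in \mptn(G)$, the off-diagonal support of $A$ is exactly the edge set. So the condition says precisely that $I \circ X = O$ and $A \circ X = O$.

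For (d), I will use the symmetry of $A$ and $X$ to rewrite the pairing:
\[
\operatorname{tr}\bigl(X(L\trans A + AL)\bigr) = \operatorname{tr}\bigl((AX)\trans L\bigr) + \operatorname{tr}(XAL) = 2\operatorname{tr}(XAL) = 2\sum_{j,k} (AX)_{jk} L_{jk},
\]
where the last equality uses $(XA)\trans = AX$. The free entries of $L$ are all entries outside row $i$, together with the $(i,i)$ entry. Therefore $X \perp \range(\dot{F}_Q)$ if and only if $(AX)_{jk} = 0$ for every such position, that is, $(AX)(i,:] = O$ and $(AX)[i,i] = 0$.

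No step is a real obstacle. The only point needing care is the index bookkeeping in (d): the transpose identity $\operatorname{tr}(XL\trans A) = \operatorname{tr}(XAL)$, and correctly matching the free entries of $L$ (rows other than $i$, plus the diagonal entry in row $i$) with the vanishing entries of $AX$.
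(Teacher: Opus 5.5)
Your proof is correct. You identify the tangent space of $\GL{i}$ at $I$, compute $\dot{F}_Q(L) = L\trans A + AL$, and use the pairing identity $\langle X, L\trans A + AL\rangle = 2\sum_{j,k}(AX)_{jk}L_{jk}$ against the free entries of $L$ (every row except $i$, plus the $(i,i)$ entry), which yields exactly (a)--(d). The paper does not reprove this proposition but imports it from \cite{SNIP}; your direct compute-the-derivatives-then-take-complements argument is the standard route to it.
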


\begin{proposition}[Proposition~6.5 of \cite{SNIP}]
\label{prop:snipequiv}
Let $G$ be a graph on $n$ vertices, $A\in\mptn(G)$, and $i\in V(G)$.  Let $F$ be defined as in \cref{eq:supperturb} and let $\dot{F}$, $\dot{F}_B$, $\dot{F}_Q$ be the derivative and the partial derivatives at $B = O$ and $Q = I$.  Then the following are equivalent:
\begin{enumerate}[label={{\rm(\alph*)}}]
\item $A$ has $i$-SNIP.
\item $\dot{F}$ is surjective.
\item $\range(\dot{F}) = \range(\dot{F}_B) + \range(\dot{F}_Q) = \msym$.
\item $\range(\dot{F}_B)^\perp \cap \range(\dot{F}_Q)^\perp = \{O\}$.
\end{enumerate}
\end{proposition}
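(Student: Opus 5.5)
The plan is to prove the chain (b)$\Leftrightarrow$(c)$\Leftrightarrow$(d)$\Leftrightarrow$(a), using \cref{prop:snipspaces} to identify the relevant subspaces. We work in the finite-dimensional real inner product space $\msym$ with the trace inner product $\langle X, Y\rangle = \operatorname{tr}(XY)$. Orthogonal complements in \cref{prop:snipspaces} are taken with respect to this inner product.

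For (b)$\Leftrightarrow$(c), note that $F$ is smooth (polynomial) in $(B,Q)$. So the total derivative at $(O,I)$ acts on a tangent direction $(\dot B, L)$ by $\dot{F}(\dot B, L) = \dot{F}_B(\dot B) + \dot{F}_Q(L)$. Hence $\range(\dot{F}) = \range(\dot{F}_B) + \range(\dot{F}_Q)$. Since both partial ranges lie in $\msym$, $\dot{F}$ is surjective onto $\msym$ exactly when this sum equals $\msym$.

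For (c)$\Leftrightarrow$(d), we use the standard identity $(U+V)^\perp = U^\perp \cap V^\perp$ for subspaces $U,V$ of $\msym$. In finite dimensions, $U+V = \msym$ if and only if $(U+V)^\perp = \{O\}$. This gives the equivalence immediately.

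The only step with content is (d)$\Leftrightarrow$(a). By parts (c) and (d) of \cref{prop:snipspaces}, the intersection $\range(\dot{F}_B)^\perp \cap \range(\dot{F}_Q)^\perp$ is the set of symmetric $X$ satisfying
\[
A\circ X = O,\quad I\circ X = O,\quad (AX)[i,i] = 0,\quad (AX)(i,:] = O.
\]
The definition of $i$-SNIP imposes all of these conditions except $(AX)[i,i]=0$. So it suffices to show that this extra condition follows from the others. Expanding, $(AX)_{i,i} = \sum_j a_{i,j}x_{j,i}$. The diagonal term $a_{i,i}x_{i,i}$ vanishes because $I\circ X = O$. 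For $j\neq i$, we have $x_{j,i} = x_{i,j}$ by symmetry, and $a_{i,j}x_{i,j} = 0$ because $A\circ X = O$. Thus $(AX)_{i,i}=0$ automatically.

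It follows that the intersection in (d) is exactly the set of matrices that $i$-SNIP requires to be zero. Therefore (d) holds if and only if $A$ has $i$-SNIP. I expect no real obstacle here; the only point to check carefully is that the $(i,i)$ condition is redundant, as shown above.
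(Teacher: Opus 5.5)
Your proof is correct. The paper does not reprove this result (it is imported from \cite{SNIP}), but your argument is the natural one built on \cref{prop:snipspaces}: $\range(\dot F)$ is the sum of the two partial ranges, and $(U+V)^\perp = U^\perp\cap V^\perp$ in the finite-dimensional space $\msym$. The intersection in (d) then differs from the set in the definition of $i$-SNIP only by the condition $(AX)[i,i]=0$, which you correctly show is forced by $A\circ X=O$ together with the symmetry of $X$.
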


Both the previous results and the new results make use of the inverse function theorem as stated below.

\begin{theorem}[Inverse Function Theorem; see, e.g., Theorem~6.6 of \cite{SNIP}]
\label{thm:invftsur}
Let $U$ and $W$ be finite-dimensional vector spaces over $\mathbb{R}$. Let $F$ be a smooth function from an open subset of $U$ to $W$ with $F(\bu_0) = \bw_0$. If the derivative $\dot{F}$ at $\bu_0$ is surjective, then there is an open subset $W'\subseteq W$ containing $\bw_0$ and a smooth function $T:W'\rightarrow U$ such that $T(\bw_0) = \bu_0$ and $F\circ T$ is the identity map on $W'$. 
\end{theorem}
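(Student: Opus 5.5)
The plan is to reduce this surjective version to the classical inverse function theorem, in which the derivative is an isomorphism between spaces of equal dimension. Let $L$ be the derivative of $F$ at $\bu_0$, a surjective linear map from $U$ to $W$. First I choose a linear subspace $V\subseteq U$ complementary to $\ker(L)$, for instance $V = \ker(L)^\perp$ after fixing any inner product on $U$. Then $L|_V : V \to W$ is injective, since $V\cap\ker(L) = \{\bzero\}$. It is also surjective, because $U = V + \ker(L)$ and hence $L(V) = L(U) = W$. So $L|_V$ is a linear isomorphism, and in particular $\dim V = \dim W$.

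Next I restrict $F$ to the affine slice through $\bu_0$ parallel to $V$. Let $\Omega\subseteq U$ be the open domain of $F$. The set $V_0 = \{\bv\in V : \bu_0 + \bv \in \Omega\}$ is open in $V$ and contains $\bzero$. On $V_0$ define
\[
    G(\bv) = F(\bu_0 + \bv).
\]
Then $G$ is smooth, being the composition of $F$ with a smooth affine map, and $G(\bzero) = \bw_0$. By the chain rule the derivative of $G$ at $\bzero$ is $L|_V$, which is invertible.

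Now I apply the classical inverse function theorem to $G : V_0 \to W$ at $\bzero$. It gives open sets $V'\subseteq V_0$ containing $\bzero$ and $W'\subseteq W$ containing $\bw_0$ such that $G$ maps $V'$ bijectively onto $W'$ with a smooth inverse $H : W' \to V'$. I then define $T : W' \to U$ by $T(\bw) = \bu_0 + H(\bw)$. This map is smooth and satisfies $T(\bw_0) = \bu_0 + H(\bw_0) = \bu_0$. Moreover, for every $\bw \in W'$,
\[
    F(T(\bw)) = G(H(\bw)) = \bw,
\]
so $F\circ T$ is the identity on $W'$, which is the claim.

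I do not expect any step to be a real obstacle. The only points needing care are the choice of the complement $V$, so that the restricted derivative is invertible rather than merely surjective, and checking that the domain of $G$ is open in $V$. Both are routine linear algebra and topology, so the proof amounts to invoking the standard inverse function theorem on a suitable slice.
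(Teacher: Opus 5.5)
Your proof is correct: $V$ is a complement of the kernel of the derivative, so $\bv \mapsto F(\bu_0+\bv)$ has invertible derivative at $\bzero$, the equal-dimension inverse function theorem gives a smooth local inverse $H$, and $T=\bu_0+H$ is a smooth right inverse of $F$ with $T(\bw_0)=\bu_0$ (since $H(\bw_0)=\bzero$), landing in the domain of $F$ because $H(W')\subseteq V_0$. The paper does not prove this statement; it cites it as Theorem~6.6 of earlier work, and your reduction to the classical inverse function theorem is the standard derivation of this surjective version, so there is nothing to add.
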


For the purposes of bifurcation, we redefine $F$ so that the two perturbations are applied in the opposite order,
giving us
\begin{equation}
\label{eq:bifperturb}
    F(B,Q) = Q\trans (A + B)Q
\end{equation}
for the remainder of the discussion.  Note that when $B = O$ or $Q = I$, the two definitions give the same function.
Therefore, they have the same partial derivatives with respect to $B$ and $Q$ at $B = O$ and $Q = I$.  Consequently, \cref{prop:snipspaces,prop:snipequiv} remain valid for \cref{eq:bifperturb}.

Next we restate the Bifurcation Lemma and present its proof.

\bifurcation*

\begin{proof}
Let $F$ be as defined in \cref{eq:bifperturb} and let $\dot{F}$ be its derivative at $B = O$ and $Q = I$.  Since $A$ has $i$-SNIP, $\dot{F}$ is surjective by \cref{prop:snipequiv}.  By \cref{thm:invftsur}, for each matrix $M\in\msym$ close enough to $A$, there are matrices $B'$ and $Q'$ such that $F(B',Q') = M$; that is,
\[
    F(B',Q') = Q^{\prime\top}(A + B')Q' = M.
\]
Thus, the matrix $A' = A + B'$ satisfies $A' = (Q^{\prime\top})^{-1}M(Q')^{-1}$. 
 When $B'$ and $Q'$ are small perturbations, $A'$ has the same pattern as $A$, i.e., $A'\in\mptn(G)$, and $A'$ has the same $i$-nullity pair as $M$.  

Since the perturbation is small, we still have   
\[
    \{L\trans A' + A'L: L\in\mat,\ L[i,i) = \bzero\trans\} + \mptncl(G) = \msym.
\]
Therefore, the matrix $A'$ has $i$-SNIP if we choose $M$ close enough to $A$.
\end{proof}

Next we turn our attention to derive an edge bound for $\excise(G,i)$.  Similar results appeared in \cite[Theorem~6.5]{HHMS10} and \cite[Corollary~29]{gSAP}. In particular, these results imply the bound $e(G) + 1 \geq \binom{\xi(G) + 1}{2}$, and below we will use this fact to obtain a similar bound for $\excise(G,i)$.
We denote the number of edges of $G$ by~$e(G)$.

\begin{theorem}
\label{thm:edgebound}
Let $(G,i)$ be a rooted graph $\excise(G,i) = k + \ell$ for some $k \leq \ell \leq k+1$.  Then 
\begin{enumerate}[label={{\rm(\alph*)}}]
\item $e(G) + 1 \geq \binom{k+2}{2}$ if $k = \ell$, and 
\item $e(G - i) + 1 \geq \binom{\ell + 1}{2}$ if $k < \ell$.
\end{enumerate}
Overall,
setting
$m = \Big\lceil \frac{\excise(G,i) + 3}{2} \Big\rceil$, the bound $e(G) + 1 \geq \binom{m}{2}$ holds.
\end{theorem}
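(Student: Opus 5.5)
The plan is to reduce both parts to the known bound $e(F) + 1 \geq \binom{\xi(F)+1}{2}$, which holds for every graph $F$ (from \cite[Theorem~6.5]{HHMS10} and \cite[Corollary~29]{gSAP}). The reduction uses \cref{thm:snipchar} together with \cref{prop:kk1}, and the final statement then follows by arithmetic.

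For (a), suppose $k = \ell$, so $(G,i)$ allows $(k,k)$ with $i$-SNIP. By \cref{prop:kk1}, $(G,i)$ also allows $(k+1,k)$ with $i$-SNIP; let $A$ be a matrix realizing this. Then $i$ is a downer index of $A$, so by \cref{thm:snipchar}(a), $A$ has SAP. Hence $\xi(G) \geq \nul(A) = k+1$, and the edge bound gives $e(G) + 1 \geq \binom{k+2}{2}$.

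For (b), suppose $k < \ell$, so $\ell = k+1$ and there is $A$ with $i$-SNIP and $i$-nullity pair $(k,k+1)$. Here $i$ is an upper index, so by \cref{thm:snipchar}(c), $A(i) \in \mptn(G - i)$ has SAP with nullity $\ell$. Thus $\xi(G-i) \geq \ell$, and the edge bound gives $e(G-i) + 1 \geq \binom{\ell+1}{2}$.

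For the overall bound, set $s = \excise(G,i)$.
\begin{itemize}
\item If $s = 2k$ is even, then $m = \lceil (2k+3)/2 \rceil = k+2$, and (a) gives the bound directly.
\item If $s = 2k+1$ is odd, then $\ell = k+1$ and $m = k+2 = \ell+1$. By (b), $e(G) + 1 \geq e(G-i) + 1 \geq \binom{\ell+1}{2} = \binom{m}{2}$.
\end{itemize}

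The case split is forced: because $\excise(G,i)$ is defined as a maximum over pairs with $k \leq \ell$, and $\ell \leq k+1$ always holds, the maximizing pair is either $(k,k)$ or $(k,k+1)$. The only step needing care is in (a), where we must pass to $(k+1,k)$ rather than use $(k,k)$ directly, since the latter would only give $\xi \geq k$. Everything else is bookkeeping.
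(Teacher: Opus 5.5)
Your proposal is correct and follows the paper's proof essentially step for step. In the even case you pass from $(k,k)$ to $(k+1,k)$ via \cref{prop:kk1} to get $\xi(G)\ge k+1$, and in the odd case you use that $A(i)$ has SAP to get $\xi(G-i)\ge \ell$. You then apply $e(F)+1\ge\binom{\xi(F)+1}{2}$, together with $e(G)\ge e(G-i)$ and the parity bookkeeping for $m$, exactly as the paper does; your appeal to \cref{thm:snipchar}(a) and (c) for the SAP conclusions is, if anything, a cleaner attribution than the paper's citation.
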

\begin{proof}
If $k = \ell$, then $(G,i)$ allows $(k+1, \ell)$ with $i$-SNIP by \cref{prop:kk1}.  Therefore, $\xi(G) \geq k+1$ and $e(G) + 1 \geq \binom{k+2}{2}$ by \cite[Theorem~3.9]{SNIP}.

If on the other hand $k < \ell$, then by \cite[Theorem~3.9]{SNIP} we have $\xi(G - i) \geq \ell$ and $e(G - i) + 1 \geq \binom{\ell + 1}{2}$.

Finally, we observe the case $k = \ell$ gives
\[
    k + 2 = \frac{\excise(G,i) + 4}{2}
\]
and the case $k < \ell$ gives
\[
    \ell + 1 = \frac{\excise(G,i) + 3}{2}.
\]
Along with $e(G) \geq e(G - i)$, we obtain the general bound
\[
    e(G) + 1 \geq \binom{m}{2}
    \ \ \mbox{ where }\  m = \bigg\lceil \frac{\excise(G,i) + 3}{2} \bigg\rceil.
\]
\end{proof}

The previous result allows us to derive a Nordhaus--Gaddum type bound, an analogous result of \cite[Section 3]{BFHH2013}.

\begin{corollary}
\label{cor:nhbound}
Let $(G,i)$ be a rooted graph on $n$ vertices. Then
\[
    \xi\xi(G,i) + \xi\xi(\overline{G},i)\leq 2\sqrt{2}n.
\]
\end{corollary}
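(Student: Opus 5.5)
The plan is to feed \cref{thm:edgebound} into the edge count of $G$ and $\overline{G}$. Set $s = \excise(G,i)$ and $t = \excise(\overline{G},i)$, and write $m_1 = \lceil (s+3)/2\rceil$ and $m_2 = \lceil (t+3)/2\rceil$. Applying \cref{thm:edgebound} to $(G,i)$ and to $(\overline{G},i)$ gives
\[
    e(G) + 1 \geq \binom{m_1}{2}, \qquad e(\overline{G}) + 1 \geq \binom{m_2}{2}.
\]

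Next, add the two inequalities and use $e(G) + e(\overline{G}) = \binom{n}{2}$. Since $m_j \geq (x+3)/2$ with $x = s$ or $t$, we have
\[
    \binom{m_j}{2} \geq \frac{(x+3)(x+1)}{8} \geq \frac{x^2}{8}.
\]
This yields
\[
    \frac{s^2 + t^2}{8} \leq \binom{n}{2} + 2 = \frac{n^2 - n + 4}{2}.
\]
By the Cauchy--Schwarz (QM--AM) inequality, $(s+t)^2 \leq 2(s^2+t^2) \leq 8(n^2 - n + 4)$.

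The remaining task is to absorb the constant. I would keep the sharper estimate $(x+3)(x+1)/8$ rather than $x^2/8$, since it contributes linear terms $4(s+t)$ that more than cancel the $+2$ from the two ``$+1$''s. Concretely, $(s+3)(s+1) + (t+3)(t+1) \leq 4n^2 - 4n + 16$ gives
\[
    s^2 + t^2 + 4(s+t) + 6 \leq 4n^2 - 4n + 16.
\]
Since $s+t \geq 0$, this implies $(s+t)^2 \leq 2(s^2+t^2) \leq 8n^2 - 8n + 20$. The right-hand side is at most $8n^2$ once $n \geq 3$. The small cases $n \leq 2$ should be checked directly: $\excise \leq 2\xi \leq 2(n-1)$ by the bound $\excise(G,i) \leq 2\xi(G)$ from \cref{ssec:prelim}, so $s + t \leq 4(n-1) \leq 2\sqrt{2}\,n$ there.

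Taking square roots gives $s + t \leq 2\sqrt{2}\,n$. The only delicate point is this constant bookkeeping for small $n$; the rest is a direct chain of inequalities.
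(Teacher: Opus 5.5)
Your argument is essentially the paper's proof: apply \cref{thm:edgebound} to $(G,i)$ and to $(\overline{G},i)$, use $\binom{m}{2}\ge \frac18(x^2+4x+3)$, add the two inequalities using $e(G)+e(\overline{G})=\binom{n}{2}$, and finish with Cauchy--Schwarz. The only difference is the last step. The paper keeps the linear term ($4(s+t)$ in your notation) and completes the square, obtaining $(s+t+4)^2\le 8n(n-1)+36$. (The paper writes $16$ for $36$, a harmless arithmetic slip.) This gives $s+t\le 2\sqrt2\,n$ for every $n\ge 1$ with no case split. You discard $4(s+t)$ and so must treat $n\le 2$ separately. Note that your remark about the linear terms ``cancelling the $+2$'' is then not what you actually use; only the constant $6$ helps.

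There is one small slip in that small-case step. The claim $2\xi(G)\le 2(n-1)$ is false for $n=1$, since $\xi(K_1)=1$ (the $1\times 1$ zero matrix has SAP). At $n=1$, the bound $\excise\le 2\xi$ would only give $s+t\le 4>2\sqrt2$. The inequality you actually want, $\excise(G,i)\le 2(n-1)$, is nevertheless true for all $n\ge1$ by a direct argument. If $(k,\ell)$ with $k\le\ell$ is a realizing pair, then $\excise(G,i)=k+\ell\le 2\ell=2\nul(A(i))\le 2(n-1)$, because $A(i)$ has order $n-1$. With that replacement, or by simply keeping the $4(s+t)$ term as the paper does, your proof is complete.
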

\begin{proof}
By \cref{thm:edgebound}, with $m$ as there defined, we have 
\[
    e(G) + 1 \geq
    \binom{m}{2}
    \ge 
    \ \frac{1}{8}(\excise(G,i)^2 + 4\,\excise(G,i) + 3).
\]
Let $x = \excise(G,i)$ , $\overline{x} = \excise(\overline{G},i)$, and $s = x + \overline{x}$.  Then we may take the sum of 
\[
    \begin{aligned}
        e(G) + 1 &\geq \frac{1}{8}(x^2 + 4x + 3) \text{ and}\\
        e(\overline{G}) + 1 &\geq \frac{1}{8}(\overline{x}^2 + 4\overline{x} + 3)
    \end{aligned}
\]
to get 
\[
    \binom{n}{2} + 2 \geq \frac{1}{8}(x^2 + \overline{x}^2 + 4s + 6).
\]
By the Cauchy--Schwarz inequality, 
\[
    (x^2 + \overline{x}^2)(1^2 + 1^2) \geq (x + \overline{x})^2 = s^2.
\]
Therefore, 
\[
    \binom{n}{2} + 2 \geq \frac{1}{8}(\frac{1}{2}s^2 + 4s + 6).
\]
By solving the inequality, we have 
\[
    s \leq -4 + \sqrt{16 + 8n(n-1)},
\]
which is bounded above by $\sqrt{8n^2} = 2\sqrt{2}n$ if $n \geq 0$.  
\end{proof}

When the Colin de Verdière parameter $\mu(G)$ was defined in \cite{CdV,CdVF}, SAP was introduced by an equivalent definition through the surjectivity of a certain map.
This was then translated to a null space characterization in \cite{ADH2021,H2010},
using the span of certain matrices, here called ``ingredients'', coming from either the vertices or the edges of the graph~$G$.
Since the characterization depends only on the subspace and the graph,
by extension it also allows for a variant of SAP to be defined on a subspace other than the kernel.

\newcommand{\uprod}[2]{\bu_{#1}^{\phantom{\trans}}\mspace{-3.5mu}
    \bu_{#2}^{\mspace{-3mu}\top}}

\begin{definition}
\label{def:recipe}
Let $G$ be a graph on $n$ vertices labeled $\{1, \dots, n\}$, and let $L$ be a subspace of $\mathbb{R}^n$ of dimension $m$. The subspace $L$ gives a \emph{full recipe from~$G$} if, given a matrix $N$ whose $m$ columns form a basis for $L$
and whose $n$ rows we call $\bu_1\trans, \dots, \bu_n\trans$, the following set of $m \times m$ real symmetric \emph{ingredients} spans
the entire space $\msym[m]$:
\begin{itemize}
\item $\uprod{j}{j}$ for $j$ any vertex of $G$ (called \emph{vertex ingredients}) and
\item $\uprod{j}{k} + \uprod{k}{j}$ for $\{j,k\}$ any edge of $G$ (called \emph{edge ingredients}).
\end{itemize}
\end{definition}

We remark that while the definition of a full recipe appears to depend on the choice of a basis matrix for $L$, it is nonetheless well-defined for any $L$.
Suppose that $N_1$ and $N_2$ are two basis matrices, implying $N_1 = N_2Q$ for some $m\times m$ invertible matrix $Q$.  Let $M\in\msym[m]$ be given.  Observe that $M$ is a linear combination of matrices of the form $\uprod{j}{j}$ and $\uprod{j}{k} + \uprod{k}{j}$ (ingredients from $N_1$) if and only if $Q\trans\! MQ$ is a linear combination of matrices of the form $Q\trans \!\uprod{j}{j}Q$ and $Q\trans \!\uprod{j}{k} Q + Q\trans \!\uprod{k}{j} Q$ (ingredients from $N_2$).
Note that the map $M \mapsto Q\trans \! MQ$ is a bijection from $\msym[m]$ to itself.
Whether the definition holds or not is thus independent of the choice of basis for the subspace $L$.

\begin{theorem}[Lemma~2.1 of \cite{H2010}; Lemma~14 of \cite{ADH2021}]
\label{thm:hein}
Let $G$ be a graph and $A\in\mptn(G)$.  Then $A$ has SAP if and only the kernel of $A$ gives a full recipe from~$G$.
\end{theorem}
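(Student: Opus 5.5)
The plan is to translate the SAP condition, which concerns symmetric matrices $X$ with $AX = O$, into a condition on small symmetric matrices $Y$, and then read off orthogonality to the ingredients. Fix a basis matrix $N$ for $\ker(A)$, with $n$ rows $\bu_1\trans,\dots,\bu_n\trans$ and $m = \nul(A)$ columns. By the remark following \cref{def:recipe}, whether $\ker(A)$ gives a full recipe does not depend on this choice. I will prove the contrapositive in both directions: $A$ fails SAP if and only if the ingredients fail to span $\msym[m]$.

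\textbf{Step 1: parametrize $X$.} I claim that the symmetric matrices $X$ with $AX = O$ are exactly the matrices $X = NYN\trans$ with $Y \in \msym[m]$, and that $X \neq O$ if and only if $Y \neq O$.
\begin{itemize}
\item If $AX = O$, every column of $X$ lies in $\ker(A) = \Col(N)$, so $X = NZ$ for some $m \times n$ matrix $Z$.
\item Since $X$ is symmetric, its rows also lie in $\Col(N)\trans$. Because $N$ has full column rank, this forces $Z = YN\trans$ with $Y$ symmetric.
\item Conversely, any $NYN\trans$ with $Y$ symmetric is symmetric and satisfies $ANYN\trans = O$.
\item Injectivity of $Y \mapsto NYN\trans$ follows from $N$ having full column rank (it has a left inverse).
\end{itemize}

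\textbf{Step 2: rewrite the entrywise conditions as orthogonality.} For $X = NYN\trans$ we have $x_{jk} = \bu_j\trans Y \bu_k$. Using the trace inner product $\langle P, Q\rangle = \operatorname{tr}(P\trans Q)$ and the symmetry of $Y$:
\begin{itemize}
\item the diagonal entries are $x_{jj} = \langle Y, \uprod{j}{j}\rangle$;
\item for $j \neq k$, $x_{jk} = \tfrac12 \langle Y, \uprod{j}{k} + \uprod{k}{j}\rangle$.
\end{itemize}
Now unpack the conditions on $X$. The condition $I \circ X = O$ says all $x_{jj} = 0$. Since $A \in \mptn(G)$, its off-diagonal nonzeros are exactly the edges of $G$. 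So, given $I \circ X = O$, the condition $A \circ X = O$ says precisely that $x_{jk} = 0$ for every edge $\{j,k\}$. Diagonal entries of $A$ impose nothing further, since they are already handled by $I \circ X = O$.

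Hence a nonzero $X$ witnessing the failure of SAP corresponds bijectively to a nonzero $Y \in \msym[m]$ orthogonal to every vertex ingredient and every edge ingredient.

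\textbf{Step 3: conclude.} A nonzero such $Y$ exists if and only if the span of the ingredients is a proper subspace of $\msym[m]$. This is because the orthogonal complement of a subspace of $\msym[m]$ is trivial exactly when the subspace is all of $\msym[m]$. Therefore $A$ has SAP if and only if $\ker(A)$ gives a full recipe from $G$. The case $m = 0$ is trivial: $\msym[0] = \{O\}$ is spanned by the empty set, and $AX = O$ with $A$ invertible forces $X = O$.

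The only delicate point is Step 1, namely that symmetry of $X$ forces the form $NYN\trans$ with $Y$ symmetric. This follows from the full column rank of $N$ applied on both sides.
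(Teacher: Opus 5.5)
The paper gives no proof of this statement. It is quoted from \cite{H2010,ADH2021} and only used, in the proof of \cref{thm:nullspace}, so there is nothing internal to compare against. Your argument is correct and complete on its own, and it is the standard duality argument.

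The map $Y \mapsto NYN\trans$ is indeed a linear isomorphism from $\msym[m]$ onto $\{X \in \msym : AX = O\}$. Your terse Step~1 can be made explicit: for any left inverse $L$ of $N$, one checks that $Y = LXL\trans$ is symmetric and satisfies $NYN\trans = X$. The identity $x_{jk} = \bu_j\trans Y \bu_k$ then converts $I\circ X = A\circ X = O$ into orthogonality of $Y$ to exactly the vertex and edge ingredients. Nondegeneracy of the trace inner product on $\msym[m]$ finishes it.

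One payoff of your route is worth noting. Nothing in it uses that the subspace is $\ker(A)$, beyond the fact that the constraint reads ``every column of the symmetric matrix $X$ lies in a fixed subspace.'' Since $(AX)(i,:] = A(i,:]X$, running the identical argument with $N$ a basis matrix of $\ker(A(i,:])$ proves \cref{thm:nullspace} directly. That avoids the paper's three-case reduction through \cref{thm:snipchar}.
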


Here we prove an analogous result for $i$-SNIP.

\begin{theorem}
\label{thm:nullspace}
Let $(G,i)$ be a rooted graph and $A\in\mptn(G)$.  Then $A$ has $i$-SNIP if and only if  the right kernel of $A(i, :]$ gives a full recipe from~$G$.
\end{theorem}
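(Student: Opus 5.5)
Let $L = \ker(A(i,:])$, the set of vectors $\bx\in\mathbb{R}^n$ with $(A\bx)_j = 0$ for all $j\neq i$, and set $m=\dim L$. Let $N$ be an $n\times m$ matrix whose columns form a basis of $L$, with rows $\bu_1\trans,\dots,\bu_n\trans$. The plan is to identify the solution space of the $i$-SNIP equations with the orthogonal complement of the span of the ingredients, exactly as in the proof of \cref{thm:hein}. The two things to establish are:
\begin{enumerate}[label={{\rm(\roman*)}}]
\item every symmetric $X$ with $(AX)(i,:]=O$ has the form $X = NYN\trans$ for some $Y\in\msym[m]$;
\item conversely, every such $NYN\trans$ satisfies $(AX)(i,:]=O$.
\end{enumerate}

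For (i), write $A(i,:]X = O$. Then each column of $X$ lies in $L$, so $X = NZ$ for some $m\times n$ matrix $Z$. Since $X$ is symmetric, its rows also lie in $L$ (as row vectors), i.e.\ $X\trans=X = Z\trans N\trans$. Using that $N$ has full column rank, we get $Z = YN\trans$ for some $m\times m$ matrix $Y$, so $X = NYN\trans$. Symmetry of $X$ together with the injectivity of $Y\mapsto NYN\trans$ (by full column rank of $N$) forces $Y$ to be symmetric. Statement (ii) is immediate, since $A(i,:]N=O$. Hence $Y\mapsto NYN\trans$ is a linear isomorphism from $\msym[m]$ onto $\{X\in\msym : (AX)(i,:]=O\}$.

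Next, translate the remaining conditions $A\circ X = I\circ X = O$. For $X = NYN\trans$ we have $X_{jk} = \bu_j\trans Y\bu_k = \langle Y, \uprod{j}{k}\rangle$ with the trace inner product, and by symmetry of $Y$ this equals $\tfrac12\langle Y,\uprod{j}{k}+\uprod{k}{j}\rangle$. The condition $I\circ X=O$ says $Y$ is orthogonal to every vertex ingredient. The condition $A\circ X = O$ says $X_{jk}=0$ for every edge $\{j,k\}$ (the diagonal is already handled and non-edges are automatic), which means $Y$ is orthogonal to every edge ingredient. Therefore the nonzero solutions $X$ of the $i$-SNIP system correspond bijectively to nonzero $Y\in\msym[m]$ orthogonal to all ingredients.

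Consequently, $A$ has $i$-SNIP if and only if the only $Y\in\msym[m]$ orthogonal to all ingredients is $O$, that is, if and only if the ingredients span $\msym[m]$, which is exactly the statement that $L$ gives a full recipe from $G$. Independence of the choice of basis was already remarked after \cref{def:recipe}. The step needing the most care is (i): showing that symmetric solutions of the one-sided condition $A(i,:]X=O$ factor as $NYN\trans$ with $Y$ symmetric. The rest is bookkeeping. The degenerate case $m=0$ is trivial, since then $X=O$ is forced and $\msym[0]$ is spanned vacuously.
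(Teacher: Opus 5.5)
Your proof is correct, but it takes a genuinely different route from the paper's.

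\textbf{The paper's route.} It argues by cases on whether $i$ is a downer, neutral, or upper index of $A$. Using \cref{thm:snipchar}, it replaces $i$-SNIP by SAP of $A$, of $A+tE_{i,i}$, or of $A(i)$, respectively. It then observes that the relevant kernel is $\ker(A(i,:])$ in each case. In the upper case this holds after padding a zero in entry $i$, which makes the ingredients involving $i$ vanish. Finally it applies \cref{thm:hein}.

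\textbf{Your route.} You instead rerun the argument behind \cref{thm:hein} directly for the one-sided equation $A(i,:]X=O$. You show that its symmetric solutions are exactly $NYN\trans$ with $Y\in\msym[m]$. The Hadamard conditions $A\circ X=I\circ X=O$ then become orthogonality of $Y$ to the vertex and edge ingredients. Both of your key steps check out:
\begin{itemize}
\item The factorization $Z=N^{+}Z\trans N\trans$ via a left inverse $N^{+}$ of $N$.
\item The identity $X_{jk}=\tfrac12\langle Y,\bu_j\bu_k\trans+\bu_k\bu_j\trans\rangle$ for symmetric $Y$.
\end{itemize}

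\textbf{Comparison.} Your approach is uniform and self-contained. It needs no case split on the index type and does not depend on the SAP characterization of $i$-SNIP in \cref{thm:snipchar}. It also gives slightly more: a linear isomorphism between the solution space of the $i$-SNIP system and the orthogonal complement of the span of the ingredients in $\msym[m]$. The paper's approach is shorter given the existing results, and it shows explicitly how the recipe condition unifies the three SAP criteria.

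\textbf{Minor remark.} Your degenerate case $m=0$ never occurs. Since $A(i,:]$ has $n-1$ rows and $n$ columns, its kernel always has dimension $m\geq 1$.
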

\begin{proof}
Suppose $i$ is a downer index of $A$.  Then $A$ has $i$-SNIP if and only if $A$ has SAP, by \cref{thm:snipchar}.
In this case row $i$ of $A$ is in the span of the remaining rows, so $\ker(A) = \ker(A(i,:])$.  By \cref{thm:hein}, $A$ has $i$-SNIP if and only if $\ker(A(i,:])$ gives a full recipe from $G$.  

Suppose $i$ is a neutral index of $A$.  Then there is a unique $t$ such that $i$ is a downer index of $A_t = A + tE_{i,i}$ and $A$ has $i$-SNIP if and only if $A_t$ has SAP by \cref{thm:snipchar}.  Similarly, $\ker(A_t) = \ker(A_t(i,:]) = \ker(A(i,:])$, so $A$ has $i$-SNIP if and only if $\ker(A(i,:])$ gives a full recipe from $G$.

Suppose $i$ is an upper index of $A$.  Then $A$ has $i$-SNIP if and only if $A(i)$ has SAP by \cref{thm:snipchar}.  In this case, column $i$ of $A(i,:]$ is not in the span of the remaining columns, so $\ker(A(i,:])$ is exactly the subspace obtained from $\ker(A(i))$ by padding a $0$ in entry~$i$.  That is, row $i$ of $N$ is zero, and $\ker(A(i))$ is fully spanned by $N(i,:]$.  Since row $i$ of $N$ is zero, for $\ker(A(i,:])$ the vertex ingredient coming from $i$ or any edge ingredient coming from $\{i,j\}$ is, in every case, the zero matrix.
By \cref{thm:hein}, $A$ has $i$-SNIP if and only if $\ker(A(i))$ gives a full recipe from $G - i$, if and only if $\ker(A(i,:])$ gives a full recipe from $G$.
\end{proof}

\section{Concluding remarks}
\label{sec:conc}
In this paper, we introduced a new parameter $\excise(G,i)$ as a tool to understand the nullity pair problem.  We established various theoretical results for $i$-SNIP, including the Bifurcation Lemma, an edge bound, and a null space definition of $i$-SNIP.
Using these results we were able to characterize the graphs with $\excise(G,i) \geq s$ for $s \in \{4,5\}$.
All known minimal minors for $\excise(G, i) \geq s$ are summarized in \cref{fig:minminor}.
\begin{question}
    \label{q:xi}
    In the known cases of minimal minors for $\excise(G, i) \ge 2k$, corresponding to values $k \in \{0, 1, 2\}$, the minimal minors are precisely the minimal minors for $\xi(G) \ge k + 1$, rooted at any vertex that is not a cut-vertex.
    Does this pattern continue for $k > 2$?
\end{question}
We proved that the minimal minors for $\excise(G, i) \geq 2k + 1$ and the minimal minors for $\excise(G, i) \geq 2k$ can be obtained from each other, reducing the cases to be studied by half.
(This implies in particular that any case of an answer to \cref{q:xi} would also resolve the case $\excise(G, i) \ge 2k + 1$.)

These results fit in the general framework of the inverse eigenvalue problem for a graph $G$ as extended to the $(\lambda, \mu)$-problem, which studies the simultaneous spectra both of a matrix described by a graph and of the principal submatrix corresponding to deletion of a single root vertex.


\subsection*{Acknowledgements}
This project started and was made possible by the 2021 IEPG-ZF Virtual Research Community at the American Institute of Mathematics (AIM), with support from the US National Science Foundation. 
Aida Abiad is supported by the Dutch Research Council (NWO) through the grant VI.Vidi.213.085. 
Mary Flagg and the AIM SQuaRE where this research was completed are partially supported by DMS grant no.~2331634 from the National Science Foundation. J. C.-H. Lin was partially supported by the National Science and Technology Council of Taiwan (grant no.~NSTC-112-2628-M-110-003 and grant no.~NSTC-113-2115-M-110-010-MY3). We thank our host AIM at Caltech for their hospitality.
We also thank Bryan Curtis for his involvement in the research that gave a foundation to the present results and for other useful discussions.



\end{document}